\newcommand{\changeoperator}[1]{%
    \csletcs{#1@saved}{#1@}%
    \csdef{#1@}{\changed@operator{#1}}%
}
\newcommand{\changed@operator}[1]{%
    \mathop{%
        \mathchoice{\textstyle\csuse{#1@saved}}
        {\csuse{#1@saved}}
        {\csuse{#1@saved}}
        {\csuse{#1@saved}}%
    }%
}
\title[nonlinear acoustic imaging]{Nonlinear  acoustic imaging with damping} 
\author{Yang Zhang}
\newtheorem{thm}{Theorem}[section]
\newtheorem{lm}{Lemma}
\newtheorem{pp}{Proposition}
\newtheorem{claim}{Claim}
\theoremstyle{definition}
\newtheorem{df}{Definition}
\newtheorem{remark}{Remark}
\newtheorem{assumption}{Assumption}
\newtheorem*{thm*}{Theorem}
\newcommand*\diff{\mathop{}\!\mathrm{d}}
\newcommand{\wfset}{{\text{\( \WF \)}}}
\newcommand{\ba}{\[\begin{aligned}}
    \newcommand{\ea}{\end{aligned}\]}
\DeclareMathOperator{\WF}{WF}
\DeclareMathOperator{\supp}{supp}
\newcommand{\rv}{}
\newcommand{\RN}[1]{
    \textup{\uppercase\expandafter{\romannumeral#1}}%
}
\newcommand{\bz}{b_0}
\newcommand{\ep}{\epsilon}
\newcommand{\lge}{\langle}
\newcommand{\rge}{\rangle}
\newcommand{\Rthree}{\mathbb{R}^3}
\newcommand{\tM}{{M_{\text{e}}}}
\newcommand{\tg}{{g_\mathrm{e}}}
\newcommand{\tx}{\tilde{x}}
\newcommand{\txi}{\tilde{\xi}}
\newcommand{\tO}{\Omega_{\mathrm{e}}}
\newcommand{\LbF}{\Lambda_{b,h,F}}
\newcommand{\lambdaFone}{\Lambda_{b^{(1)},h^{(1)}, F^{(1)}}}
\newcommand{\lambdaFtwo}{\Lambda_{b^{(2)},h^{(2)}, F^{(2)}}}
\newcommand{\Fk}{F^{(k)}}
\newcommand{\Fone}{F^{(1)}}
\newcommand{\Ftwo}{F^{(2)}}
\newcommand{\Fthree}{F^{(3)}}
\newcommand{\pk}{p^{(k)}}
\newcommand{\pone}{p^{(1)}}
\newcommand{\ptwo}{p^{(2)}}
\newcommand{\pthree}{p^{(3)}}
\newcommand{\betak}{\beta^{(k)}}
\newcommand{\betaone}{\beta^{(1)}}
\newcommand{\betatwo}{\beta^{(2)}}
\newcommand{\bk}{b^{(k)}}
\newcommand{\bkt}{\tilde{b}^{(k)}}
\newcommand{\bone}{b^{(1)}}
\newcommand{\btwo}{b^{(2)}}
\newcommand{\bonet}{\tilde{b}^{(1)}}
\newcommand{\btwot}{\tilde{b}^{(2)}}
\newcommand{\hk}{h^{(k)}}
\newcommand{\hkt}{\tilde{h}^{(k)}}
\newcommand{\hone}{h^{(1)}}
\newcommand{\htwo}{h^{(2)}}
\newcommand{\honet}{\tilde{h}^{(1)}}
\newcommand{\htwot}{\tilde{h}^{(2)}}
\newcommand{\cc}{c}
\newcommand{\dd}{d}
\newcommand{\LFk}{\Lambda_{\bk, \Fk, \hk}}
\newcommand{\LFone}{\Lambda_{\bone, \hone, \Fone}}
\newcommand{\LFtwo}{\Lambda_{\btwo, \htwo, \Ftwo}}
\newcommand{\LFthree}{{\Lambda}_{\btwo, \htwo, \Fthree}}
\newcommand{\Upk}{\Upsilon^{(k)}}
\newcommand{\Upone}{\Upsilon^{(1)}}
\newcommand{\Uptwo}{\Upsilon^{(2)}}
\newcommand{\Fkpk}{\Fk(x, \pk,\partial_t \pk, \partial^2_t \pk)}
\newcommand{\depthree}{\partial_{\epsilon_1}\partial_{\epsilon_2}\partial_{\epsilon_3}}
\newcommand{\zepthree}{|_{\epsilon_1 = \epsilon_2 = \epsilon_3=0}}
\newcommand{\depfour}{\partial_{\epsilon_1}\partial_{\epsilon_2}\partial_{\epsilon_3}\partial_{\epsilon_4}}
\newcommand{\zepfour}{|_{\epsilon_1 = \epsilon_2 = \epsilon_3=\epsilon_4 = 0}}
\newcommand{\Uthree}{\mathcal{U}_3}
\DeclareMathOperator{\Ufour}{\mathcal{U}_{4}}
\newcommand{\Q}{Q}
\newcommand{\Qk}{\Q^{(k)}}
\newcommand{\Qone}{\Q^{(1)}}
\newcommand{\Qtwo}{\Q^{(2)}}
\newcommand{\Qbg}{Q_{\text{bvp}}}
\newcommand{\Mo}{(0, T) \times \Omega}
\newcommand{\pMo}{(0,T) \times \partial \Omega}
\newcommand{\V}{(0,T) \times (\tO \setminus \Omega)}
\newcommand{\nxxi}{{\mathcal{N}(\vec{x}, \vec{\xi})}}
\newcommand{\ntxxi}{{{\mathcal{R}}(\vec{x}, \vec{\xi})}}
\newcommand{\unionGamma}{\Gamma({\vec{x}, \vec{\xi}})}
\newcommand{\zm}{{Z^m}}
\newcommand{\zmm}{{Z^{m-1}}}
\newcommand{\zi}{{\zeta^{i}}}
\newcommand{\zj}{{\zeta^{j}}}
\newcommand{\zk}{{\zeta^{k}}}
\newcommand{\zl}{{\zeta^{l}}}
\newcommand{\zetam}{{\zeta^{m}}}
\newcommand{\ziz}{\zeta_0^{i}}
\newcommand{\zjz}{\zeta_0^{j}}
\newcommand{\zkz}{\zeta_0^{k}}
\newcommand{\zlz}{\zeta_0^{l}}
\newcommand{\zjt}{\tilde{\zj}}
\newcommand{\zone}{\zeta^{1}}
\newcommand{\ztwo}{\zeta^{2}}
\newcommand{\zthree}{\zeta^{3}}
\newcommand{\zfour}{\zeta^{4}}
\newcommand{\ztwot}{\tilde{\zeta}^{2}}
\newcommand{\zthreet}{\tilde{\zeta}^{3}}
\newcommand{\zfourt}{\tilde{\zeta}^{4}}
\newcommand{\zhone}{\hat{\zeta}^{1}}
\newcommand{\zhtwo}{\hat{\zeta}^{2}}
\newcommand{\zhthree}{\hat{\zeta}^{3}}
\newcommand{\zhfour}{\hat{\zeta}^{4}}
\newcommand{\zhj}{\hat{\zeta}^{j}}
\newcommand{\zjk}{\zeta^{j,k}}
\newcommand{\zonek}{\zeta^{1,k}}
\newcommand{\ztwok}{\zeta^{2,k}}
\newcommand{\zthreek}{\zeta^{3,k}}
\newcommand{\uNk}{u_{N}^{(k)}}
\newcommand{\phik}{\phi^{(k)}}
\newcommand{\ak}{a^{(k)}}
\newcommand{\Xk}{X^{(k)}}
\newcommand{\psik}{\psi^{(k)}}
\newcommand{\bx}{x_|}
\newcommand{\bxi}{\xi_|}
\newcommand{\intM}{M^{{o}}}
\newcommand{\Char}{\mathrm{Char}}
\newcommand{\Ical}{\mathcal{I}}
\newcommand{\sigmp}{{ \sigma_{p}}}
\newcommand{\pM}{\partial M}
\newcommand{\TMpm}{T_{\partial M, \pm} M}
\newcommand{\LcMpm}{L^{*}_{\partial M, \pm}M}
\newcommand{\LcMpo}{L^{*}_{\partial M, +}M}
\newcommand{\yb}{y_|}
\newcommand{\etab}{\eta_|}
\newcommand{\tf}{u_f}
\newcommand{\Wset}{\mathbb{W} = \bigcup_{y^-, y^+ \in (0,T) \times \partial \Omega} I(y^-, y^+) \cap \intM}
\newcommand{\epslam}{{\partial_{\epsilon_1}\partial_{\epsilon_2}\partial_{\epsilon_3}\partial_{\epsilon_4} \LbF(f) |_{\epsilon_1 = \epsilon_2 = \epsilon_3 = \ep_4=0}}}
\newcommand{\epslamone}{{\partial_{\epsilon_1} \LbF(\ep_1 f_1) |_{\epsilon_1=0}}}
\newcommand{\epslamonebullet}{{\partial_{\epsilon_1} \LbF(\ep_1 \bullet) |_{\epsilon_1=0}}}
\newcommand{\ptx}{p(t,x')}
\newcommand{\zh}{\hat{\zeta}}
\newcommand{\thett}{({\theta}/{2})}
\newcommand{\sintw}{\sin \thett}
\newcommand{\nbg}{\nabla}
\newcommand{\sq}{\square_g}
\newcommand{\tp}{\tilde{p}}
\newcommand{\tF}{\widetilde{F}}
\newcommand{\unt}{u_n(t)}
\newcommand{\uns}{u_n(s)}
\newcommand{\unz}{u_n(0)}
\newcommand{\dtm}{\partial_t^m}
\newcommand{\dtmplus}{\partial_t^{m+1}}
\newcommand{\dtmminus}{\partial_t^{m-1}}
\newcommand{\Pz}{P_o}
\newcommand{\ghalf}{|g|^{\frac{1}{4}}}
\newcommand{\ghalfinv}{|g|^{-\frac{1}{4}}}
\newcommand{\subsymb}{\sigma_{\mathrm{sub}}}
\newcommand{\psymb}{\sigma_{{p}}}
\newcommand{\Omegahalf}{\Omega^{\frac{1}{2}}}
\newcommand{\aomega}{c_\omega}
\newcommand{\Hp}{H_P}
\newcommand{\Lambdaz}{(N^*\text{Diag})^g}
\newcommand{\ds}{\frac{\diff \ }{\diff s}}
\newcommand{\uu}{\varrho}
\newcommand{\tuu}{\tilde{\varrho}}
\begin{document}
\maketitle
\begin{abstract}
In this paper, we consider an inverse problem for a nonlinear wave equation with a damping term and a general nonlinear term.
This problem arises in nonlinear acoustic imaging and has applications in medical imaging and other fields.
The propagation of ultrasound waves can be modeled by a quasilinear wave equation with a damping term.
We show the boundary measurements encoded in the Dirichlet-to-Neumann map (DN map) determine the damping term and the nonlinearity at the same time. 
In a more general setting, we consider a quasilinear wave equation with a one-form (a first-order term) and a general nonlinear term.
We prove the one-form and the nonlinearity can be determined from the DN map, up to a gauge transformation, under some assumptions.
\end{abstract}

\section{Introduction}
Nonlinear ultrasound waves are widely used in medical imaging.
The propagation of high-intensity ultrasound waves are modeled by nonlinear wave equations; see \cite{humphrey2003non}.
They have many applications in diagnostic and therapeutic medicine, for example, see \cite{Anvari2015, Aubry2016,Demi2014,Demi2014a,Eyding2020,Fang2019, Gaynor2015, Harrer2003,Hedrick2005,Soldati2014, Szabo2014, Haar2016, Thomas1998,Webb2017}.

In this work, we consider a nonlinear acoustic equation with a damping term and a general nonlinearity.
Let $\Omega$ be a bounded subset in $\Rthree$ with smooth boundary.
Let  {\rv $x = (t,x') \in  \mathbb{R} \times \Omega$} and $c(x') > 0$ be the smooth sound speed of the medium.
Let $\ptx$ denote the pressure field of the ultrasound waves.
A model for the pressure field in the medium $\Omega$ with a damping term can be written as (see \cite{Kaltenbacher2021})
\begin{equation*}
    \begin{aligned}
        \partial_t^2 p  - c^2(x) \Delta p  
        - Dp
        - F(x, p,\partial_t p, \partial^2_t p) &= 0, & \  & \mbox{in } (0,T) \times \Omega,\\
        p &= f, & \ &\mbox{on } (0,T) \times \partial \Omega,\\
        p = {\partial_t p} &= 0, & \  & \mbox{on } \{t=0\},
    \end{aligned}
\end{equation*}
where $f$ is the insonation profile on the boundary,
$D$ models the damping phenomenon,
and $F$ is the nonlinear term modeling the nonlinear
response of the medium.

When there are no damping effects, the recovery of the nonlinear coefficients
from the Dirichlet-to-Neumann map (DN map) is studied in \cite{ultra21, UZ_acoustic}.
In particular, in \cite{ultra21} the author consider the nonlinear wave equation of Westervelt type, i.e., with $F(x,p, \partial_t, \partial^2_t) = \beta(x') \partial_t^2 (p^2)$,
using the second-order linearization and Gaussian beams.
In \cite{UZ_acoustic}, the recovery of a nonlinear term given by
$F(x,p, \partial_t, \partial^2_t) =\sum_{m=1}^{+\infty} \beta_{m+1}(x) \partial_t^2 (p^{m+1})$
from the DN map is considered, using distorted plane waves.

On the other hand, damping effects exist in many applications of medical imaging, physics, and engineering,
for example, see \cite{aanonsen1984distortion}.
The damped or attenuated acoustic equations have been studied in many works, including but not limited to \cite{ang1988strongly,pata2005strongly,kaltenbacher2009global,arrieta1992damped,kaltenbacher2011well,kaltenbacher2012analysis,dekkers2019mathematical,scholle2022weakly,rudenko2022dispersive,meliani2022analysis,
kaltenbacher2021some,k2022parabolic,romanov2020recovering,
romanov2021recovering,
kaltenbacher2011well,
baker2022linear,
kaltenbacher2022simultanenous}.
Among this, the stabilization and control of damped wave equations are considered in \cite{burq2016exponential, burq2015imperfect,burq2006energy,le2021stabilization}.
Most recently, in \cite{fu2022inverse}, the author considers the recovery of a time-dependent weakly damping term and the nonlinearity, using measurements from the initial data to the Neumann boundary data.
The analysis is based on Carleman estimates and Gaussian beams.

In this work, we plan to study the recovery of a general nonlinearity as well as the damping coefficient, when there is a damping term $D = -\bz(x) \partial_t$.
More explicitly, the nonlinear equation is given by
\begin{align}\label{eq_damp}
    \partial_t^2 p  - c^2(x) \Delta p + \bz(x) \partial_t p
    - \sum_{m=1}^{+\infty} \beta_{m+1}(x) \partial_t^2 (p^{m+1}) = 0,
\end{align}
where $\bz(x) \in C^\infty(M)$ and $\beta_{m+1}(x) \in C^\infty(M)$, for $m\geq 1$.
This term is called a weakly damping term in some literature and it models the damping mechanism proportional to velocity.
We consider the boundary measurement given by the DN map
\[
\Lambda_{\bz, F} f = \partial_\nu p|_{(0, T) \times \partial \Omega},
\]
where $\nu$ is the outer unit normal vector to $\partial \Omega$.

\subsection{Main result}
We have the following result for nonlinear acoustic imaging with weakly damping effects, which is a special case of our result for a nonlinear acoustic equation with an arbitrary one-from in Theorem \ref{thm}.
First, we suppose the smooth functions $c, b_0, \beta_{m+1}$ are independent of $t$.
\begin{assumption}\label{assum_Omega}
Consider the rays associated to the wave speed $c(x')$ in $\Omega$,
i.e., the geodesics of the Riemannian metric
%
$g_0 = c^{-2}(x') ((\diff x^1)^2 + (\diff x^2)^2 + (\diff x^3)^2).$
We assume that $\Omega$ is nontrapping and $\partial \Omega$ is strictly convex w.r.t. these rays (geodesics).
Here by nontrapping, we mean there exists $T>0$ such that
    \[
    \mathrm{diam}_{g_0}(\Omega) = \sup\{\text{lengths of all rays, i.e., geodesics in $(\Omega, g_0)$}\} < T.
    \]
\end{assumption}
With this assumption,
we show that the DN map determines the damping term and the nonlinear coefficients, under nonvanishing assumptions on $\beta_2, \beta_3$.
\begin{thm}\label{thm1}
    Let $(\Omega, g_0)$ satisfy Assumption \ref{assum_Omega}.
    Consider the nonlinear wave equation
    \[
    \partial_t^2 \pk  - c^2(x') \Delta \pk + \bk_0(x') \partial_t \pk
    -\sum_{m=1}^{+\infty} \betak_{m+1}(x') \partial_t^2 ((\pk)^{m+1}) = 0, \quad k = 1,2.
    \]
    Suppose for $k=1,2$ and
    for each $x' \in \Omega$,
    there exists $m_k \geq 1$ such that { $\betak_{m_k+1}(x') \neq 0$}.
    Assume the quantity $2 (\betak_2)^2 + \betak_3$ does not vanish on any open set of $\Omega$.
    If the Dirichlet-to-Neumann maps satisfy
    \[
    \Lambda_{\bone_0, \betaone}(f) = \Lambda_{\btwo_0, \betatwo}(f)
    \] 
    for all $f$ in a small neighborhood of the zero functions in $C^6([0,T] \times \partial \Omega)$,
    then
    \[
    \btwo_0  = \bone_0, 
    \quad \betatwo_{m+1} =  \betaone_{m+1},
    \]
    for any $x' \in \Omega$ and $m \geq 1$.
\end{thm}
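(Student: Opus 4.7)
The plan is to derive Theorem \ref{thm1} as a corollary of the more general Theorem \ref{thm}, which handles a quasilinear wave equation with an arbitrary first-order term (one-form). One identifies the damping $b_0(x')\partial_t$ with the spacetime one-form whose only nonzero component is $h_0 = b_0(x')$, i.e.\ $h = b_0(x')\, dt$. Theorem \ref{thm} then determines this one-form \emph{up to a gauge transformation} and pins down the coefficients $\beta_{m+1}$ exactly. I expect the gauge to be generated by a multiplicative change $p \mapsto \phi p$ with $\phi|_{\partial\Omega}=1$ and $\phi|_{t=0}=1$; demanding that the transformed one-form again have vanishing spatial components and be time-independent will force $\phi \equiv 1$, giving $b_0^{(1)} = b_0^{(2)}$ and hence $\beta^{(1)}_{m+1} = \beta^{(2)}_{m+1}$ for all $m$.

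For the proof of the general theorem itself, the strategy is higher-order linearization of the DN map. Setting $f = \sum_{j=1}^{N}\epsilon_j f_j$, one expands $p$ formally in $\epsilon$ and examines $\partial_{\epsilon_1}\cdots\partial_{\epsilon_N}\Lambda_{b_0,\beta}(f)|_{\epsilon=0}$. The first-order linearization gives the linear damped wave equation $\partial_t^2 u - c^2\Delta u + b_0 \partial_t u = 0$; under Assumption \ref{assum_Omega} its DN map recovers $b_0$ up to the gauge above, via standard propagation of singularities along geodesics of $g_0$ together with boundary control. This step should go through essentially as in the undamped case once one accounts for the subprincipal correction introduced by the damping.

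Next I would recover the $\beta_{m+1}$ inductively in $m$. The second-order linearization inserts the source $\beta_2 \partial_t^2(u_1 u_2)$ into the linear damped equation for $v^{(2)} = \partial_{\epsilon_1}\partial_{\epsilon_2}p|_{\epsilon=0}$; probing with distorted plane waves whose covectors meet transversally at a chosen point, and reading off the resulting conormal singularity in the DN map, recovers $\beta_2$ pointwise. At order $m+1$, the effective source splits into the direct term $\beta_{m+1}\partial_t^2(u_1\cdots u_{m+1})$ plus iterated nonlinear contributions involving $\beta_j$ with $j\le m$; those are already known by induction, so one can solve for $\beta_{m+1}$. The hypothesis $\beta^{(k)}_{m_k+1}(x')\neq 0$ at each point ensures the nonlinearity is genuinely active everywhere, and the nondegeneracy $2(\beta^{(k)}_2)^2 + \beta^{(k)}_3 \not\equiv 0$ on open sets is exactly what allows the base case of the induction to separate the indirect $\beta_2$-cascade contribution (two copies of the quadratic nonlinearity acting on a pair of incoming waves and then interacting with a third) from the direct $\beta_3$ contribution.

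The main obstacle I anticipate is the damping itself: it destroys energy conservation and adds a zero-order (in the principal symbol) correction to the amplitude transport equation along the characteristic flow. One must propagate the amplitudes of the probing waves carefully past this zeroth-order term, and ensure the multilinear stationary-phase / paired-Lagrangian computation yields principal symbols from which $\beta_{m+1}$ is extracted without contamination by $b_0$. Handling the damping as the temporal component of a spacetime one-form is the conceptual device that cleanly separates the linear information ($b_0$) from the nonlinear information ($\beta_{m+1}$) and makes the gauge structure transparent.
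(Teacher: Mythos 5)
Your reduction of Theorem \ref{thm1} to Theorem \ref{thm} is essentially the paper's: identify $b_0(x')\partial_t$ with a spacetime one-form whose only nonzero component is along $dt$ with coefficient depending only on $x'$, apply Theorem \ref{thm} to get $b^{(2)} = b^{(1)} + 2\varrho^{-1}d\varrho$ on $\mathbb{W}$ with $\varrho|_{\partial M}=1$, and observe that since both $b^{(k)}$ have no spatial components and are $t$-independent, one must have $\partial_{x^i}\varrho=0$ for $i=1,2,3$, so $\varrho=\varrho(t)$, and then $\varrho\equiv 1$ on $\mathbb{R}\times\partial\Omega$ forces $\varrho\equiv 1$, giving $b_0^{(1)}=b_0^{(2)}$ and $\beta^{(1)}_{m+1}=\beta^{(2)}_{m+1}$ on $\mathbb{W}$; Proposition \ref{pp_thm1} and $t$-independence then extend this to all of $\Omega$. (Minor: your notation $h=b_0\,dt$ for the one-form collides with the paper's use of $h$ for the potential.)

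Your sketch of the proof of Theorem \ref{thm} itself, however, is not the paper's route, and two of its steps would fail. First, you propose to recover $b_0$ from the first-order linearization by propagation of singularities plus boundary control. But $b$ enters only at the subprincipal level, so the first-order DN map sees it only through a weighted light-ray transform along null geodesics; asserting injectivity of that transform on a Lorentzian manifold with boundary is precisely what the paper deliberately avoids. Instead the paper combines the third- and fourth-order linearizations, equations (\ref{eq_u3}) and (\ref{eq_u4}), so that the common attenuation factor $\sigma_p(Q)(q,\zeta,x_1^o,(\xi_1^o)^\sharp)$ can be solved for by division, and then differentiates the transport equation (\ref{eq_Qtransport}) along the geodesic while varying the incoming direction at $q$ to obtain $b^{(2)}-b^{(1)}=2\varrho^{-1}d\varrho$ pointwise; the one-form and the nonlinearity are never decoupled. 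Second, you propose to read off $\beta_2$ from the second-order linearization using distorted plane waves, but two such waves produce no new propagating singularities: for linearly independent null covectors $\zeta^1,\zeta^2$ one has $|\alpha\zeta^1+\beta\zeta^2|^2_{g^*}=2\alpha\beta\langle\zeta^1,\zeta^2\rangle_{g^*}$, so the only lightlike elements of $N^*(K_1\cap K_2)$ are multiples of $\zeta^1$ or $\zeta^2$, already in $\Lambda_1\cup\Lambda_2$. This is exactly why the paper's induction begins at order three, where the accessible combination is $2\beta_2^2+\beta_3$ rather than $\beta_2$ alone, and why the nondegeneracy hypothesis on $2(\beta^{(k)}_2)^2+\beta^{(k)}_3$ is needed to make the ratio that defines $\varrho$ meaningful.
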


We emphasize that the Westervelt type equation with a weakly damping term is covered as a special case by Theorem \ref{thm1}.
This result can be regarded as an example of a more general setting, including the case of a time-dependent damping term and a general nonlinear term.
Recall $M = \mathbb{R} \times \Omega$ and let $\intM$ be the interior of $M$.
The leading term of the differential operator in (\ref{eq_damp}) corresponds to a Lorentzian metric
\[
g = - \diff t^2 + g_0 = -\diff t^2 + c^{-2}(x')(\diff x')^2
\]
and we have
\[
 \sq p = \partial_t^2 \ptx
 - c^2(x') \Delta \ptx
 + c^3(x') \partial_i(c^{-3}(x')g^{ij})\partial_j \ptx.
\]
Note that $(M,g)$ is a globally hyperbolic Lorentzian manifold with timelike boundary $\partial M = \mathbb{R} \times \partial \Omega$.
Additionally, we assume $\partial M$ is null-convex, that is,
for any null vector $v \in T_p \partial M$ one has
\begin{equation*}
        \kappa(v,v) = g (\nabla_{\nu} v, v) \geq 0,
\end{equation*}
where we denote by $\nu$ the outward pointing unit normal vector field on $\partial M$.
This is true especially when $\partial \Omega$ is convex w.r.t $g_{0}$.
In the following, we consider a globally hyperbolic Lorentzian manifold $(M,g)$ with timelike and null-convex boundary.

We consider the nonlinear acoustic equation
\begin{equation}\label{eq_problem}
    \begin{aligned}
        \square_g p
        + \langle b(x), \nbg p \rangle + h(x)p
        -\sum_{m=1}^{+\infty} \beta_{m+1}(x) \partial_t^2 (p^{m+1}) &= 0, & \  & \mbox{in } \Mo\\
        p &= f, & \ &\mbox{on } \pMo,\\
        p = {\partial_t p} &= 0, & \  & \mbox{on } \{t=0\},
    \end{aligned}
\end{equation}
where $b(x) \in C^\infty(M; T^*M)$ is a one-form,
$h(x) \in C^\infty(M)$ is a potential,
and the nonlinear coefficients $\beta_{m+1}(x)  \in C^\infty(M)$ for $m \geq 1$.
We consider the boundary measurement for each $f$ given by the DN map
\[
\LbF f = (\partial_\nu p + \frac{1}{2}\lge b(x), \nu \rge p)|_{\pMo},
\]
where $\nu$ is the outer unit normal vector to $\partial \Omega$.
Suppose
the nonlinear coefficients
$\beta_{m+1}(x), m \geq 1$ are unknown and  the one-form $b(x)$ is unknown.
We consider the inverse problem of recovering  $\beta_{m+1}(x)$ and $b(x)$ from $\LbF$, for $m \geq 1$.

We introduce some definitions to state the result.
A smooth path $\mu:(a,b) \rightarrow M$ is timelike if $g(\dot{\mu}(s), \dot{\mu}(s)) <0 $ for any $s \in (a,b)$.
It is causal if $g(\dot{\mu}(s), \dot{\mu}(s)) \leq 0 $ with $\dot{\mu}(s) \neq 0$ for any $s \in (a,b)$.
For $p, q \in M$, we denote by $p < q$ (or $p \ll q$) if $p \neq q$ and there is a future pointing casual (or timelike) curve from $p$ to $q$.
We denote by $p \leq q$ if either $p = q$ or $p<q$.
The chronological future of $p$ is the set $I^+(p) = \{q \in M: \ p \ll q\}$
and the causal future of $p$ is the  set $J^+(p) = \{q \in M: \ p \leq q\}$.
Similarly we can define the chronological past $I^-(p)$ and the causal past $J^-(p)$.
For convenience, we use the notation $J(p,q)= J^+(p) \cap J^-(q)$ to denote the diamond set $J^+(p) \cap J^-(q)$ and
$I(p,q)$ to denote the set $I^+(p) \cap I^-(q)$.
We consider the recovery of the nonlinear coefficients in a suitable larger set
\[
\Wset.
\]

\begin{thm}\label{thm}
    Let $(M,g)$ be a globally hyperbolic Lorentzian manifold with timelike and null-convex boundary, where we assume $M = \mathbb{R} \times \Omega$ and $\Omega$ is a 3-dimensional manifold with smooth boundary.
    Consider the nonlinear wave equation
    \[
    \sq p^{(k)} + \langle \bk(x), \nbg \pk \rangle + \hk(x)\pk - \Fk(x, \pk,\partial_t \pk,\partial^2_t \pk) = 0, \quad  k=1,2,
    \]
    where $\Fk$ depends on $x$ smoothly and have the convergent expansions
    \begin{align*}
        \Fk(x, \pk,\partial_t \pk, \partial^2_t \pk) = \sum_{m=1}^{+\infty} \betak_{m+1}(x) \partial_t^2 ((\pk)^{m+1}).
    \end{align*}
    Suppose 
    for each $x \in \mathbb{W}$,
    there exists $m \geq 1$ such that { $\beta_{m+1}(x) \neq 0$}.
    Assume the quantity $2 (\betak_2)^2 + \betak_3$ does not vanish on any open set of $\mathbb{W}$.
    If the Dirichlet-to-Neumann maps  satisfy
    \[
    \LFone(f) = \LFtwo(f)
    \] 
    for all functions $f$ in a small neighborhood of the zero functions in $C^6([0,T] \times \partial \Omega)$,
    then there exists $\uu \in C^\infty(M)$ with $\uu|_{\partial M} = 1$ such that
    \begin{align*}
        \btwo  = \bone + 2\uu^{-1} \diff \uu, \quad \betatwo_{m+1} = \uu^m  \betaone_{m+1},
    \end{align*}
    {for any } $m \geq 1$ and $x \in \mathbb{W}$.
    In addition, if we have
    \begin{align}\label{assump_h}
        \htwo = \hone  + \lge \bone, \nbg \uu \rge + \uu^{-1} \sq \uu,
    \end{align}
then $\partial_t \uu  = 0$ for any $x \in \mathbb{W}$.
\end{thm}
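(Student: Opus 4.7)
The plan is to apply the higher-order linearization scheme, adapted to the Lorentzian wave operator with a damping one-form. Writing $f = \sum_{j=1}^{N} \ep_j f_j$ for smooth boundary data $f_j$, the solution $p$ of (\ref{eq_problem}) depends smoothly on $\vec\ep$, and differentiating the identity $\LFone(f) = \LFtwo(f)$ at $\vec\ep = 0$ produces equalities between the successive $N$-fold linearizations of $\LbF$. The first $\ep$-derivative $U^{(1)} := \partial_{\ep_1} p|_{\ep_1 = 0}$ solves $(\sq + \lge b, \nbg \cdot \rge + h) U^{(1)} = 0$ with Dirichlet datum $f_1$, and matching the first-order DN maps (including the $\tfrac{1}{2}\lge b, \nu\rge$ boundary normalization), together with the standard gauge-recovery argument for first-order perturbations of the wave operator on a globally hyperbolic Lorentzian manifold with null-convex timelike boundary, yields $\uu \in C^\infty(M)$ with $\uu|_{\pM} = 1$ such that $\btwo = \bone + 2\uu^{-1}\diff \uu$, while $\htwo$ is recovered only up to the induced gauge on $h$.

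For each $m \geq 1$, the $(m+1)$-fold linearization $\UN := \partial_{\ep_1}\cdots\partial_{\ep_{m+1}} p|_{\vec\ep=0}$ satisfies a linear wave equation whose source is built from the $(m+1)$-wave interaction through $\betak_{m+1}$ together with the nonlinear cascade of the lower-order coefficients already recovered. Choosing the $f_j$ so that the $U^{(1)}_j$ are distorted plane waves with wave front sets meeting transversally at a prescribed interior point $q \in \mathbb{W}$, the interaction singularity is localized near $q$ and propagates along a null geodesic to $(0,T)\times \partial \Omega$, where its principal symbol equals $\betak_{m+1}(q)$ times an explicit nonvanishing geometric factor. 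The non-degeneracy of $2(\betak_2)^2 + \betak_3$ ensures that the cubic contribution is not cancelled by the quadratic cascade at the three-wave interaction, so the induction on $m$ is well-defined. Conjugating the first equation by $\uu$, i.e., writing $\pone = \uu v$, transforms the top-order $(m+1)$-wave interaction amplitude by a factor $\uu^m$, and matching the two $(m+1)$-fold linearizations then yields
\[
\betatwo_{m+1}(x) = \uu(x)^m \, \betaone_{m+1}(x), \qquad x \in \mathbb{W},\ m \geq 1,
\]
where the pointwise nonvanishing of some $\beta_{m+1}$ at each $x \in \mathbb{W}$ fixes $\uu$ uniquely there.

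Under the supplementary hypothesis (\ref{assump_h}), the conjugation $\pone = \uu v$ transforms the \emph{full} linear part of the first equation exactly into the linear part of the second equation, eliminating the residual $h$-gauge freedom. The nonlinear parts must therefore match identically in $v$, i.e., for every $m$,
\[
\uu^{-1} \betaone_{m+1} \, \partial_t^2 \bigl((\uu v)^{m+1}\bigr) = \betatwo_{m+1}\, \partial_t^2(v^{m+1}).
\]
Expanding the left-hand side by Leibniz and cancelling the top-order term using $\betatwo_{m+1} = \uu^m \betaone_{m+1}$ leaves
\[
\betaone_{m+1}\Bigl( 2\uu^{-1}(\partial_t \uu^{m+1})\, \partial_t(v^{m+1}) + \uu^{-1}(\partial_t^2 \uu^{m+1})\, v^{m+1} \Bigr) = 0,
\]
which, as an identity in $v$ provided by varying the higher-order linearization data, forces the coefficients of $\partial_t(v^{m+1})$ and $v^{m+1}$ to vanish separately wherever $\betaone_{m+1} \neq 0$. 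Thus $\partial_t \uu = 0$ at every such point, and the pointwise nonvanishing hypothesis propagates this to all of $\mathbb{W}$.

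The main obstacle is the multi-wave interaction analysis in the Lorentzian, damped setting. One must construct distorted plane waves launched from $(0,T) \times \partial \Omega$ whose wave fronts meet transversally at any prescribed $q \in \mathbb{W}$, track the propagation of the resulting interaction singularity from $q$ back to the boundary through the timelike null-convex geometry, and absorb the subprincipal contribution of $\lge b, \nbg\cdot\rge$ into the transport equation along the relevant null geodesics. The diamond structure defining $\mathbb{W}$ and the null-convexity of $\pM$ are essential to ensure the singularity reaches the boundary and is recorded by the DN map with a non-degenerate symbol proportional to $\betak_{m+1}(q)$.
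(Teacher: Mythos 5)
Your outline diverges from the paper at the most critical step, and the divergence is a genuine gap rather than an alternative route. You assert that matching the \emph{first-order} linearizations, "together with the standard gauge-recovery argument for first-order perturbations of the wave operator," already yields $\uu$ with $\btwo = \bone + 2\uu^{-1}\diff\uu$ in the interior. No such result is available here: recovering a (possibly time-dependent) one-form up to gauge from the linear DN map on a globally hyperbolic Lorentzian manifold is precisely the hard linear inverse problem that the paper deliberately avoids, and the author states explicitly that one cannot expect to recover the one-form first and the nonlinearity afterwards. In the paper the first-order linearization is used only to determine the boundary jets of $b$ and $h$ (Lemma \ref{lm_boundary}), which permits the smooth extension across $\partial M$. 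The interior recovery of $b$ comes instead from \emph{combining} the third- and fourth-order linearizations: the three-wave interaction gives $m_3 \propto (2\beta_2^2+\beta_3)\,\sigmp(\Q)(y,\eta,q,\zeta)\,(\sigmp(\Q)(q,\zeta,x_1,\xi_1^\sharp))^3$ and the asymptotic analysis of the four-wave interaction as the aperture $s\to 0$ gives $m_4 \propto (4\beta_2^3-3\beta_2\beta_3)\,\sigmp(\Q)(y,\eta,q,\zeta)\,(\sigmp(\Q)(q,\zeta,x_1,\xi_1^\sharp))^4$; the mismatch in the powers $3$ versus $4$ isolates the ratio $\sigmp(\Qone)/\sigmp(\Qtwo)$ up to the explicit factor $\uu(q)$ built from the $\beta$'s, and differentiating that relation along the bicharacteristic (where $\langle b,\dot x\rangle$ enters the transport equation) produces $\btwo-\bone = 2\uu^{-1}\diff\uu$. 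Your later claim that matching the $(m+1)$-fold linearizations yields $\betatwo_{m+1}=\uu^m\betaone_{m+1}$ also presupposes this $\uu$ is already in hand, and it ignores that the interaction symbols contain the unknown factors $\sigmp(\Q)(y,\eta,q,\zeta)$ and $\prod_j\sigmp(v_j)(q,\zeta^j)$, which differ between the two problems precisely because the one-forms differ.

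The final step also has a gap. You argue that under (\ref{assump_h}) the nonlinear parts "must match identically in $v$," and then read off $\partial_t\uu=0$ by equating coefficients of $\partial_t(v^{m+1})$ and $v^{m+1}$. Equality of DN maps does not give an identity of interior nonlinear terms; it only constrains boundary traces. The paper instead forms $\pthree=\uu^{-1}\pone$, observes it solves a problem with the same linear part as problem $2$ but nonlinearity $\Fthree = \Ftwo + \uu^{-1}N_1+\uu^{-1}N_0$, and then runs the third-order (and, on the strata where $\beta_2,\beta_3$ vanish, higher-order) linearization on the \emph{difference}. The resulting principal symbol is a combination $\sum \cc_3(-\beta_2+1) + 2\cc_2\beta_2\, I_3(\zone,\ztwo,\zthree)$, and one needs the explicit construction of two covector configurations with distinct values of $I_3$ (Lemma \ref{lm_construction3}) to obtain a nondegenerate linear system forcing $\cc_3(-\beta_2+1)=\beta_2\cc_2=0$, followed by the stratification over the sets $W_k$ where successive $\beta_j$ vanish. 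None of this separation mechanism, nor the role of the hypothesis that $2(\betak_2)^2+\betak_3$ does not vanish on open sets, is present in your proposal.
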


This theorem shows the unique recovery of the one-form $b(x)$ and the nonlinear coefficients $\beta_{m+1}$ for $m \geq 1$, from the knowledge of the DN map, up to a gauge transformation, under our assumptions.
On the one hand, without knowing the potential $h(x)$, one can recover $b(x)$ up to an error term  $2\uu^{-1} \diff \uu$, where $\uu \in C^\infty(M)$ with $\uu|_{\partial M} = 1$.
On the other hand, with the assumptions on $h(x)$, we can show this error term is given by some $\uu \in C^\infty(\Omega)$ with $\uu|_{\partial \Omega} = 1$, which corresponds to a gauge transformation, for more details see Section \ref{sec_gauge}.
We point out it would be interesting to consider the recovery of the potential $h(x)$ from the DN map but this is out of scope for this work.



The inverse problems of recovering the metric and the nonlinear term for a semilinear wave equation were considered in \cite{Kurylev2018}, in a globally hyperbolic Lorentzian manifold without boundary.
The main idea is to use the multi-fold linearization and the nonlinear interaction of waves.
By choosing specially designed sources, one can expect to detect the new singularities produced by the interaction of distorted plane waves, from the measurements.
The information about the metric and the nonlinearity is encoded in these new singularities.
One can extract such information from the principal symbol of the new singularities, using the calculus of conormal distributions and paired Lagrangian distributions.
Starting with \cite{Kurylev2018, Kurylev2014a},
there are many works studying inverse problems for nonlinear hyperbolic equations, see
\cite{Barreto2021,Barreto2020, Chen2019,Chen2020,Hoop2019,Hoop2019a,Uhlig2020,Feizmohammadi2019,Hintz2020, Kurylev2014,Balehowsky2020,Lai2021,Lassas2017,Tzou2021,Uhlmann2020,Hintz2021,ultra21,Uhlmann2019}.
For an overview of the recent progress, see \cite{lassas2018inverse,Uhlmann2021}.
In particular, inverse boundary value problems for nonlinear hyperbolic equations are considered in \cite{Hoop2019,Uhlmann2019, Hoop2019,Uhlmann2019, Hintz2017, Hintz2020, Hintz2021, Uhlmann2021a, ultra21, UZ_acoustic}.

Compared to \cite{fu2022inverse}, we consider the recovery using the DN map, instead of using the map from the initial data to the Neumann boundary data.
In Section \ref{sec_gauge}, we show there is a gauge transformation for the DN map.
In \cite{Chen2020}, the recovery of a connection from the source-to-solution map is considered, using the broken light ray transform, for wave equations with a cubic nonlinear term.
This connection is contained in the lower order term as well,
while the nonlinearity is known.
In our case, to recover the lower order term (the one-form) and the nonlinearity at the same time,
we cannot expect to recover one of them first.
Our main idea is to combine the third-order linearization and the fourth-order linearization of the DN map.
In particular, we consider the asymptotic behavior of the fourth-order linearization for some special constructions of lightlike covectors, based on the analysis in \cite{UZ_acoustic}.

The plan of this paper is as follows.
In Section \ref{sec_gauge}, we derive the gauge invariance of the DN map.
In Section \ref{sec_prelim}, we present some preliminaries for Lorentzian geometry as well as microlocal analysis, and construct the parametrix for the wave operator.
By Proposition \ref{pp_thm1}, Theorem \ref{thm1} is a special case of Theorem \ref{thm} and therefore our goal is to prove Theorem \ref{thm} using nonlinear interaction of distorted plane waves.
In Section \ref{sec_threefour}, we recall some results for the interaction of three and four distorted planes waves in \cite{UZ_acoustic}.
Based on these results, 
we recover the one-form and the nonlinear coefficients up to en error term in Section \ref{sec_recover_b}.
The recovery is based on special constructions of lightlike covectors at each $q\in \mathbb{W}$.
In Section \ref{sec_nonlinear}, we use the nonlinearity to show the error term is corresponding to a gauge transformation, with the assumption on the potential $h(x)$.
In the appendix, we establish the local well-posedness for the boundary value problems (\ref{eq_problem}) with small boundary data in Section \ref{Sec_well}, and then  we determine the jets of the one-form and the potential on the boundary in \ref{subsec_boundary}.
The latter allows us to smoothly extend the one-form and the potential to a larger Lorentzian manifold without boundary, see Section \ref{subsec_extension}.

\subsection*{Acknowledgment}
The author would like to thank Gunther Uhlmann for numerous helpful discussion throughout this project, and to thank Katya Krupchyk for suggestions on some useful reference.
The author is partially supported by a Simons Travel Grant.
\section{Gauge invariance}\label{sec_gauge}

\begin{lm}\label{lm_gauge}
    Let $(M,g)$ be defined as in Theorem \ref{thm}.
    Suppose $b(x) \in C^\infty(M; T^*M)$, $h(x) \in C^\infty(M)$, and $F$ is the nonlinear term given by 
    $\sum_{m=1}^{+\infty} \beta_{m+1}(x) \partial_t^2 (p^{m+1})$,
    with smooth $\beta_{m+1}$  for $m \geq 1$.
    Let $\uu \in C^\infty(\Omega)$ be nonvanishing with $\uu|_{\partial \Omega} = 1$. 
    We define
    \begin{align*}
    b^\uu = b + 2 \uu^{-1}\diff \uu, \quad
    h^\uu = h +  \lge b(x), \uu^{-1} \diff \uu \rge + \uu^{-1} \square_g \uu, \quad
    F^\uu(x, p,\partial_t p, \partial^2_t p) = \sum_{m=1}^{+\infty} \uu^{{m}} \beta_{m+1} \partial_t^2 (p^{m+1}).
    \end{align*}
    Then we have
    \begin{align*}
        \LbF(f)= \Lambda_{b^\uu, h^\uu, F^\uu }(f)
    \end{align*}
for any $f$ in a small neighborhood of the zero functions in $C^6([0,T] \times \partial \Omega)$.
\end{lm}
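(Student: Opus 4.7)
The plan is to establish the gauge invariance of the DN map by constructing an explicit change of dependent variable that sends solutions of the $(b,h,F)$ boundary value problem bijectively onto solutions of the $(b^\varrho,h^\varrho,F^\varrho)$ problem, while preserving the prescribed Dirichlet data $f$ and the zero initial data, and producing identical modified Neumann outputs. The natural candidate is the multiplicative substitution $p=\varrho\,\tilde p$ (equivalently $\tilde p = \varrho^{-1}p$). Since $\varrho\in C^\infty(\Omega)$ is time-independent, the vanishing Cauchy data at $t=0$ transfer directly between $p$ and $\tilde p$; since $\varrho|_{\partial\Omega}=1$, the Dirichlet trace is preserved: $\tilde p|_{\pMo}=p|_{\pMo}=f$.

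The core computation is to rewrite each term of the PDE under this substitution. The Leibniz identity for the wave operator,
\[
\square_g(\varrho\tilde p) \;=\; \varrho\,\square_g\tilde p \;+\; \tilde p\,\square_g\varrho \;-\; 2\,g(\nabla\varrho,\nabla\tilde p),
\]
(with the sign fixed by the paper's convention for $\square_g$), together with the ordinary Leibniz rule $\langle b,\nabla(\varrho\tilde p)\rangle=\varrho\langle b,\nabla\tilde p\rangle+\tilde p\langle b,\nabla\varrho\rangle$ and the identity $\partial_t^2\bigl((\varrho\tilde p)^{m+1}\bigr)=\varrho^{m+1}\partial_t^2(\tilde p^{m+1})$ (which uses $\partial_t\varrho=0$), transforms the equation. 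After dividing by $\varrho$ and collecting terms, the cross-term generated by $\square_g$ is absorbed into the first-order term to produce $\langle b^\varrho,\nabla\tilde p\rangle$; the two scalar cross-contributions $\varrho^{-1}\tilde p\,\square_g\varrho$ and $\varrho^{-1}\tilde p\,\langle b,\nabla\varrho\rangle$ combine with $h\tilde p$ to give $h^\varrho\tilde p$; and the ratio $\varrho^{m+1}/\varrho = \varrho^m$ in the nonlinear term yields $F^\varrho$. Thus $\tilde p$ solves the $(b^\varrho,h^\varrho,F^\varrho)$ boundary value problem with the same boundary data $f$ and zero initial data.

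For the DN map equality, observe that on $\partial M$ we have $\varrho=1$, so
\[
\partial_\nu p = (\partial_\nu\varrho)f+\partial_\nu\tilde p, \qquad \langle b^\varrho,\nu\rangle = \langle b,\nu\rangle+2\,\partial_\nu\varrho,
\]
using $\langle\varrho^{-1}d\varrho,\nu\rangle|_{\partial M}=\partial_\nu\varrho$. Substituting into $\partial_\nu p+\tfrac12\langle b,\nu\rangle p$ and into $\partial_\nu\tilde p+\tfrac12\langle b^\varrho,\nu\rangle\tilde p$, the extra $\partial_\nu\varrho\cdot f$ contributions balance exactly, yielding $\Lambda_{b,h,F}(f)=\Lambda_{b^\varrho,h^\varrho,F^\varrho}(f)$. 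Local well-posedness (Section \ref{Sec_well}) guarantees that both boundary value problems admit unique small-data solutions in the prescribed regularity class, so the substitution is a genuine bijection on solution spaces and the equality propagates for every $f$ in a small $C^6(\pMo)$-neighborhood of zero.

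The main bookkeeping obstacle is tracking signs carefully in the Leibniz rule for $\square_g$ and verifying that the cross-terms coming from the wave operator, the first-order term, and the potential collect precisely into the formula $h^\varrho = h+\varrho^{-1}\langle b,\nabla\varrho\rangle + \varrho^{-1}\square_g\varrho$ (rather than picking up spurious $\varrho^{-2}|d\varrho|_g^2$ terms). This is the key algebraic identity that pins down the right form of $h^\varrho$ and is the heart of the lemma.
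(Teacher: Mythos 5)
Your proposal is correct and follows essentially the same route as the paper: the substitution $p=\uu\tp$, the Leibniz expansion of $\square_g(\uu\tp)$, the identity $\partial_t^2((\uu\tp)^{m+1})=\uu^{m+1}\partial_t^2(\tp^{m+1})$ from $\partial_t\uu=0$, division by $\uu$ to read off $b^\uu$, $h^\uu$, $F^\uu$, and the cancellation of the $\partial_\nu\uu\cdot f$ contributions in the modified Neumann trace using $\uu|_{\partial\Omega}=1$. No substantive differences.
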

\begin{proof}
For a fixed $f$ with small data, let $p$ be the solution to the boundary value problem (\ref{eq_problem}).
We write 
$ p = \uu \tp$ and we compute
\begin{align*}
\sq p
&= \uu \sq \tp +  2 \lge  \nbg \uu, \nbg \tp \rge + \tp\sq \uu
= \uu (\sq \tp +  2\lge \uu^{-1} \nbg \uu, \nbg \tp \rge +  (\uu^{-1}\sq \uu)\tp).
\end{align*}
Note that
$\partial_t^2 ((p)^{m+1})
= \uu^{m+1} \partial_t^2 \tp$,
since we assume $\uu$ does not depend on $t$.
It follows that
\begin{align*}
\sum_{m=1}^{+\infty}  \beta_{m+1} \partial_t^2 (p^{m+1})
= \uu \sum_{m=1}^{+\infty} \uu^{m} \beta_{m+1} \partial_t^2 (\tp^{m+1})
=\uu F^\uu (x, \tp,\partial_t \tp, \partial^2_t \tp).
\end{align*}
Then we compute
\begin{align*}
&\square_g p
+ \langle b, \nbg p \rangle + h (x) p
- F(x, \tp,\partial_t p, \partial^2_t p) \\
= &
\uu (\sq \tp +  \lge b(x) + 2\uu^{-1} \nbg \uu, \nbg \tp \rge + (\uu^{-1} \sq \uu
+ \lge b(x),  \uu^{-1} \nbg \uu \rge
+ h)\tp -
F^\uu (x, p,\partial_t \tp, \partial^2_t \tp)
)\\
= & \uu (\sq \tp +  \lge b^\uu, \nbg \tp \rge + h^\uu \tp -
F^\uu (x, \tp,\partial_t \tp, \partial^2_t \tp)
).
\end{align*}
This implies $\tp$ is the solution to the nonlinear equation
$\sq \tp +  \lge b^\uu, \nbg \tp \rge + h^\uu \tp -
F^\uu (x, \tp,\partial_t \tp, \partial^2_t \tp) = 0$
with the boundary data $\tp|_{\pMo} = p|_{\pMo} = f$.
Then we have
\begin{align*}
\LbF(f)
=& (\partial_\nu (\uu \tp) + \frac{1}{2} \lge b, \nu \rge \uu \tp)|_{\pMo}\\
=&
(\uu \partial_\nu \tp  + \frac{1}{2} \uu \lge b, \nu \rge  \tp
+ \lge  \nabla \uu , \nu \rge p) |_{\pMo}\\
= & (\uu \partial_\nu \tp + \frac{1}{2}  \uu \lge b^\uu, \nu \rge  \tp)|_{\pMo}
= \Lambda_{b^\uu, h^\uu, F^\uu }(f)
\end{align*}
since $\uu|_{\partial \Omega} = 1$.
\end{proof}

\section{Preliminaries}\label{sec_prelim}

\subsection{Lorentzian manifolds}
Recall $(M,g)$ is globally hyperbolic with timelike and null-convex boundary, where $M = \mathbb{R} \times \Omega$.
As in \cite{Hintz2020}, we extend $(M,g)$ smoothly to a slightly larger globally hyperbolic Lorentzian manifold $(\tM, g_\mathrm{e})$ without boundary,
where $\tM  = \mathbb{R} \times \tO$
such that $\Omega$ is contained in the interior of the open set $\tO$.
{\rv
See also \cite[Section 7]{Uhlmann2021a} for more details about the extension.
In the following, we abuse the notation and do not distinguish $g$ with $g_\mathrm{e}$ if there is no confusion caused.
Let \[V  = (0,T) \times \Omega_\mathrm{e} \setminus \Omega\] be the virtual observation set.
In Section \ref{sec_threefour}, we will use $V$ to construct boundary sources.
}


We recall some notations and preliminaries in \cite{Kurylev2018}.
For $\eta \in T_p^*\tM$, the corresponding vector of $\eta$  is denoted by $ \eta^\sharp \in T_p \tM$.
The corresponding covector of  $v \in T_p \tM$ is denoted by $ v^\flat \in T^*_p \tM$.
We denote by
\[
L_p \tM = \{v \in T_p \tM \setminus 0: \  g(v, v) = 0\}
\]
the set of light-like vectors at $p \in \tM$ and similarly by $L^*_p \tM$ the set of light-like covectors.
The sets of future-pointing (or past-pointing) light-like vectors are denoted by $L^+_p \tM$ (or $L^-_p \tM$), and those of future-pointing (or past-pointing) light-like covectors are denoted by $L^{*,+}_p \tM$ (or $L^{*,-}_p \tM$).

We denote the outward (+) and inward (-) pointing tangent bundles by
\begin{equation}\label{def_tbundle}
    \TMpm = \{(x, v) \in \partial TM: \ \pm g(v, n)>0 \},
\end{equation}
where $n$ is the outward pointing unit normal of $\partial M$.
For convenience, we also introduce the notation
\begin{equation}\label{def_Lbundle}
    \LcMpm =\{(z, \zeta)\in L^* M \text{ such that } (z, \zeta^\sharp)\in \TMpm \}
\end{equation}
to denote the lightlike covectors that are outward or inward pointing on the boundary.


The time separation function $\tau(x,y) \in [0, \infty)$ between two points $x < y$ in  $\tM$
is the supremum of the lengths \[
L(\alpha) =  \int_0^1 \sqrt{-g(\dot{\alpha}(s), \dot{\alpha}(s))} ds
\] of
the piecewise smooth causal paths $\alpha: [0,1] \rightarrow \tM$ from $x$ to $y$.
If $x<y$ is not true, we define $\tau(x,y) = 0$.
Note that $\tau(x,y)$ satisfies the reverse triangle inequality
\[
\tau(x,y) +\tau(y,z) \leq \tau(x,z), \text{ where } x \leq y \leq z.
\]
For $(x,v) \in L^+\tM$, recall the cut locus function
\[
\rho(x,v) = \sup \{ s\in [0, \mathcal{T}(x,v)]:\ \tau(x, \gamma_{x,v}(s)) = 0 \},
\]
where $\mathcal{T}(x,v)$ is the maximal time such that $\gamma_{x,v}(s)$ is defined.
Here we denote by $\gamma_{x, v}$ the unique null geodesic starting from $x$ in the direction $v$.
The cut locus function for past lightlike vector $(x,w) \in L^-\tM$ is defined dually with opposite time orientation, i.e.,
\[
\rho(x,w) = \inf \{ s\in [\mathcal{T}(x,w),0]:\ \tau( \gamma_{x,w}(s), x) = 0 \}.
\]
For convenience, we abuse the notation $\rho(x, \zeta)$ to denote $\rho(x, \zeta^\sharp)$ if $\zeta \in L^{*,\pm}\tM$.
By \cite[Theorem 9.15]{Beem2017}, the first cut point $\gamma_{x,v}(\rho(x,v))$ is either the first conjugate point or the first point on $\gamma_{x,v}$ where there is another different geodesic segment connecting $x$ and  $\gamma_{x,v}(\rho(x,v))$.

In particular, 	when $g = -\diff t^2 + g_0$, one can prove the following proposition.
\begin{pp}[{\cite[Proposition 2]{UZ_acoustic}}]\label{pp_thm1}
    Let $(\Omega, g_0)$ satisfy the assumption (\ref{assum_Omega}) and $g = -\diff t^2 + g_0$, see (\ref{eq_g0}) for the definition of $g_0$.
    For any $x'_0 \in \Omega$, one can find a point $q\in \mathbb{W}$ with $q = (t_q, x'_0)$ for some $t_q \in (0, T)$.
\end{pp}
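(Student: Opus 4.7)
The plan is to construct, for an arbitrary $x'_0 \in \Omega$, a future-pointing timelike curve in $(M,g)$ whose endpoints lie on $(0,T) \times \partial \Omega$ and which passes through some point $q = (t_q, x'_0)$ with $t_q \in (0,T)$. Once such a curve exists, $q$ automatically belongs to $I(y^-, y^+) \cap \intM$ for the corresponding endpoints $y^{\pm}$, hence to $\mathbb{W}$.

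The construction uses the product structure $g = -\diff t^2 + g_0$ to reduce the question to a fact about $g_0$-geodesics in $\Omega$. First, I would fix any unit vector $v_0 \in T_{x'_0}\Omega$ and consider the maximal $g_0$-geodesic $\sigma$ through $x'_0$ with $\dot{\sigma}(0) = v_0$, parametrized by arclength. By the nontrapping hypothesis in Assumption \ref{assum_Omega}, and with strict convexity of $\partial \Omega$ ensuring that $\sigma$ actually reaches the boundary transversally rather than grazing it, there exist $a, b > 0$ with $\sigma(-a), \sigma(b) \in \partial \Omega$ and total length $\ell := a + b \leq \mathrm{diam}_{g_0}(\Omega) < T$.

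Next, choose a scale factor $c$ with $1 < c < T/\ell$, which is possible precisely because $\ell < T$. Define the spacetime curve
\[
\alpha(s) = (cs + t_0,\ \sigma(s)), \qquad s \in [-a, b],
\]
where $t_0 \in (ca,\ T - cb)$; this interval is nonempty since $c\ell < T$. Then $\dot\alpha = (c, \dot\sigma)$ and
\[
g(\dot\alpha, \dot\alpha) = -c^2 + g_0(\dot\sigma, \dot\sigma) = -c^2 + 1 < 0,
\]
so $\alpha$ is timelike; with $\dot t = c > 0$, it is future-pointing. Setting $y^- = \alpha(-a) = (t_0 - ca,\, \sigma(-a))$ and $y^+ = \alpha(b) = (t_0 + cb,\, \sigma(b))$, the time components lie in $(0, T)$ by the choice of $t_0$, so $y^{\pm} \in (0,T)\times \partial \Omega$. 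Taking $q := \alpha(0) = (t_0, x'_0)$ gives the desired point, with $t_q = t_0 \in (0,T)$ and $q \in I^+(y^-) \cap I^-(y^+) \cap \intM \subset \mathbb{W}$.

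There is no serious obstacle: the only small technical point is ensuring that $\sigma$ actually reaches $\partial \Omega$ in both directions, which is where nontrapping and strict convexity enter, and ensuring the time interval for $t_0$ is nonempty, which forces the strict inequality $c > 1$ together with $c\ell < T$ and hence requires $\ell < T$ rather than just $\ell \leq T$. The rest of the argument is an elementary adjustment of constants.
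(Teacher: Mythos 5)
Your proof is correct. The paper does not reproduce the argument here — it simply cites \cite{UZ_acoustic} — but your construction (a spatial $g_0$-geodesic $\sigma$ through $x'_0$ reaching $\partial\Omega$ in both directions with total arclength $\ell < T$, guaranteed by nontrapping, lifted to the future-pointing timelike curve $\alpha(s) = (cs + t_0,\, \sigma(s))$ with $1 < c < T/\ell$ and $t_0 \in (ca,\, T - cb)$, giving endpoints $y^{\pm} \in (0,T)\times\partial\Omega$ and $q = \alpha(0) \in I(y^-,y^+)\cap M^{o} \subset \mathbb{W}$) is the natural argument for the product metric $-\diff t^2 + g_0$, and all the constant choices check out.
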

Moreover, with $\lge b(x), \nbg \rge = b_0(x) \partial_t$, we have
$\LbF = \Lambda_{\bz, F}$.
This implies Theorem \ref{thm1} is the result of Theorem \ref{thm}, since
there is no $\uu(x) \in C^\infty(M)$ such that
\[
\btwo(x') = \bone(x') + 2\uu^{-1} \diff \uu
\]
satisfying
$
\lge \bk(x'), \nabla p(x) \rge =  \bk_0(x') \partial_t p
$
for $k=1,2$.
\subsection{Distributions}
Suppose $\Lambda$ is a conic Lagrangian submanifold in $T^*\tM$ away from the zero section.
We denote by $\Ical^\mu(\Lambda)$ the set of Lagrangian distributions in $\tM$ associated with $\Lambda$ of order $\mu$.
In local coordinates, a Lagrangian distribution can be written as an oscillatory integral and we regard its principal symbol,
which is invariantly defined on $\Lambda$ with values in the half density bundle tensored with the Maslov bundle, as a function in the cotangent bundle.
If $\Lambda$ is a conormal bundle of a submanifold $K$ of $\tM$, i.e. $\Lambda = N^*K$, then such distributions are also called conormal distributions.
The space of distributions in $\tM$ associated with two cleanly intersecting conic Lagrangian manifolds $\Lambda_0, \Lambda_1 \subset T^*\tM \setminus 0$ is denoted by $\Ical^{p,l}(\Lambda_0, \Lambda_1)$.
If $u \in \Ical^{p,l}(\Lambda_0, \Lambda_1)$, then one has $\wfset{(u)} \subset \Lambda_0 \cup \Lambda_1$ and
\[
u \in \Ical^{p+l}(\Lambda_0 \setminus \Lambda_1), \quad  u \in \Ical^{p}(\Lambda_1 \setminus \Lambda_0)
\]
away from their intersection $\Lambda_0 \cap \Lambda_1$. The principal symbol of $u$ on $\Lambda_0$  and  $\Lambda_1$ can be defined accordingly and they satisfy some compatible conditions on the intersection.

For more detailed introduction to Lagrangian distributions and paired Lagrangian distributions, see \cite[Section 3.2]{Kurylev2018} and \cite[Section 2.2]{Lassas2018}.
The main reference are \cite{MR2304165, Hoermander2009} for conormal and Lagrangian distributions and
{\rv
\cite{Melrose1979,Guillemin1981,Hoop2015,Greenleaf1990,Greenleaf1993}
for paired Lagrangian distributions.}

{
\subsection{The causal inverse}\label{subsec_Q}
We consider the linear operator
\[
\Pz  = \square_g
+ \langle b(x), \nbg   \rangle + h(x),
\]
on the globally hyperbolic Lorentzian manifold $(\tM,\tg)$ without boundary.
Note that here $\Pz$ is defined for distributions on $\tM$.
To apply the calculus in \cite{Hoermander1971},
more precisely, one needs to consider operators acting on half densities instead of distributions.
In particular, since we deal with subprincipal symbols,
considering half densities gives us some constant in our analysis.
However, this is not essential for the recovery of the one-form and nonlinearity.

More precisely, one can consider the half-density $\ghalf$ and define
\[
P v = \ghalf \square_g(\ghalfinv v)
+ \langle b(x), \ghalf\nbg(\ghalfinv v)  \rangle + h(x)v,
\]
for $v \in \mathcal{E}'(\tM; \Omegahalf)$, see \cite{Chen2020}.
The principal symbol and subprincipal symbol is given by
\begin{align}\label{eq_psP}
    \psymb(P)(x, \zeta) =g^{ij}\zeta_i \zeta_j,
    \quad \quad \quad  \subsymb(P)(x, \zeta) = \iota \langle b(x), \zeta \rangle. 
\end{align}
The characteristic set $\Char(P)$ is the set $\psymb(P)^{-1}(0) \subset T^*\tM$.
It is also the set of light-like covectors with the Lorentzian metric $g$.
The Hamilton vector field is
\begin{align*}
\Hp = 2g^{ij}\zeta_i \frac{\partial}{\partial x^j}
- \frac{\partial g^{kl}}{\partial x^j}\zeta_k\zeta_l \frac{\partial}{\partial \zeta_j},
\end{align*}
and we consider the corresponding flow $\phi_s: T^*M \rightarrow T^*M$, for $s \in \mathbb{R}$.
We write
\[
\phi_s(x, \zeta)  = (x(s), \zeta(s)) = \lambda(s).
\]
The set $\{(x(s), \zeta(s)), \ s \in \mathbb{R}\}$ is the null bicharacteristic $\Theta_{x, \zeta}$ of $P$.
Moreover,
let $\Lambda$ be a conic Lagrangian submanifold in $T^*M \setminus 0$ intersecting $\Char(P)$ transversally.
We use the notation $ \Lambda^g$ to denote the flow-out of $\Lambda \cap \Char(P)$ under the Hamiltonian flow, i.e.,
for any fixed lightlike covector $(x, \zeta) \in \Lambda \cap \Char(P)$, we have $\phi_s(x, \zeta) \in \Lambda^g$ for $s \in \mathbb{R}$.
In addition, the integral curves $x(s), \zeta(s)$ satisfy the equations
\begin{align*}
    \dot{x}^j = 2 g^{ij} \zeta_i,
    \quad \dot{\zeta}_j =  - \frac{\partial g^{kl}}{\partial x^j} \zeta_k \zeta_l,
\end{align*}
where we write ${\diff x}/{\diff s} = \dot{x}$
and ${\diff \zeta}/{\diff s}=\dot{\zeta}$.
This implies
that $x(s)$ is a unique null geodesic on $\tM$, starting from $x$ in the direction of $2 \zeta^\sharp$, with
$\zeta_i(s) = \frac{1}{2}g_{ij}\dot{x}^j(s)$.
%


Note that $P$ is normally hyperbolic, see \cite[Section 1.5]{Baer2007}.
It has a unique casual inverse $P^{-1}$ according to \cite[Theorem 3.3.1]{Baer2007}.
By \cite{Duistermaat1972}
and \cite[Proposition 6.6]{Melrose1979}, one can symbolically construct a parametrix $\Q$, which is the solution operator to the wave equation
\begin{align}\label{eq_lineareq}
P v &= f, \quad \text{ on } \tM,\\
v & = 0, \quad \text{ on } \tM \setminus J^+(\supp(f)), \nonumber
\end{align}
in the microlocal sense.
It follows that $\Q \equiv P^{-1}$ up to a smoothing operator.
Let $k_Q(x, \tx) \in \mathcal{D}'(\tM \times \tM; \Omegahalf)$ be the Schwartz kernel of $\Q$, i.e.,
\[
\Q v(x) = \int k_Q(x, \tx) v(\tx) \diff \tx,
\]
and
it is a paired Lagrangian distribution
in $\Ical^{-\frac{3}{2}, -\frac{1}{2}} (N^*\text{Diag}, \Lambdaz)$.
Here $\text{Diag}$ denotes the diagonal in $\tM \times \tM$ and $N^*\text{Diag}$ is its conormal bundle.
The notation $\Lambdaz$ is the flow out of
$N^*\text{Diag} \cap \Char(P)$ under the Hamiltonian vector field $\Hp$.

We construct the microlocal solution to the equation
\[
P k_Q(x, \tilde{x}) = \delta(x, \tilde{x}) \mod C^\infty(\tM \times \tM; \Omega^{\frac{1}{2}}),
\]
using the proof of \cite[Proposition 6.6]{Melrose1979},
where we regard $P$ as its lift to $X \times X$ under the first projection $X \times X \rightarrow X$ of a differential operator on $X$.
The symbol of $\Q$ can be found during the construction there.
In particular, the principal symbol of $\Q$ along $N^*\text{Diag}$  satisfying
$
\sigma_p(\delta) = \sigma_p(P) \sigma_p(\Q)
$
is nonvanishing.
The principal symbol of $Q$ along $\Lambdaz \setminus N^*\text{Diag}$ solves the transport equation
\begin{align*}
\mathcal{L}_{\Hp}\sigma_p(\Q) + \iota \subsymb(P)\sigma_p(\Q) = 0,
\end{align*}
where the Hamiltonian vector field $\Hp$ is lifted to $(T^*X \setminus 0) \times (T^*X \setminus 0)$ and
$\mathcal{L}_{\Hp}$ is its Lie action on half densities over $(T^*X \setminus 0) \times (T^*X \setminus 0)$.
The initial condition 
is given by restricting $\sigma_p(\Q)|_{N^*\text{Diag}}$ to $\partial \Lambdaz$;
see \cite[(6.7) Section 4 and 6]{Melrose1979}.
We have the following proposition according to \cite[Proposition 2.1]{Greenleaf1993}, see also \cite[Proposition 2.1]{Lassas2018}.
\begin{pp}
Let $\Lambda$ be a conic Lagrangian submanifold in $T^*M \setminus 0$.
Suppose $\Lambda$ intersects $\Char(P)$ transversally, such that its intersection with each bicharacteristics has finite many times.
Then
\[
\Q: \Ical^\mu(\Lambda) \rightarrow \Ical^{p,l}(\Lambda, \Lambda^g),
\]
where $ \Lambda^g$ is the flow-out of $\Lambda \cap \Char(P)$ under the Hamiltonian flow.
Moreover, for $u \in \Ical^\mu(\Lambda)$ and $(x, \xi) \in \Lambda^g \setminus \Lambda$, we have
\[
\sigma_p(\Q u)(x, \xi) = \sum \sigma(\Q)(x, \xi, y_j, \eta_j)\sigma_p(u)(y_j, \eta_j),
\]
where the summation is over the points $(y_j, \eta_j) \in \Lambda$ that lie on the bicharacteristics from $(x, \xi)$.
\end{pp}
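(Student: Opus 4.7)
The plan is to exploit that $\Q$ is the causal parametrix for $P$ (so $P\Q \equiv I$ modulo smoothing) and reduce the claim to propagation of singularities together with the composition calculus for paired Lagrangian distributions. Setting $v = \Q u$ for $u \in \Ical^\mu(\Lambda)$, one has $Pv = u + w$ with $w \in C^\infty(\tM; \Omegahalf)$. Microlocal elliptic regularity for $P$ off $\Char(P)$ gives $\wfset(v) \subset \Char(P) \cup \Lambda$, and the Duistermaat--H\"ormander theorem shows that the wavefront set of $v$ on $\Char(P) \setminus \Lambda$ is invariant under the Hamiltonian flow of $\Hp$. Because $\Q$ is the \emph{causal} inverse, singularities of $v$ on $\Char(P)$ can only emanate forward from points of $\Lambda \cap \Char(P)$, and the transversality hypothesis ensures $\Lambda \cap \Char(P)$ is a smooth codimension-one submanifold of $\Lambda$ with well-defined bicharacteristic flow-out. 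We thus obtain $\wfset(v) \subset \Lambda \cup \Lambda^g$.

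To upgrade this wavefront statement to membership in $\Ical^{p,l}(\Lambda, \Lambda^g)$, I would apply the Greenleaf--Uhlmann composition theorem for paired Lagrangian distributions. Writing $\Q u(x) = \int k_\Q(x, \tilde x) u(\tilde x)\, \diff \tilde x$ with $k_\Q \in \Ical^{-3/2, -1/2}(N^*\text{Diag}, \Lambdaz)$, one lifts $u$ to $\tM \times \tM$ as a Lagrangian distribution associated with $T^*\tM \times \Lambda$ and then applies the pushforward along the first projection. The transversality of $\Lambda$ with $\Char(P)$, combined with the finite-intersection hypothesis, guarantees that the relevant fiber products are clean and that the pushforward is transverse, so the composition stays in the paired Lagrangian class, with the second Lagrangian factor being precisely the flow-out $\Lambda^g$ of $\Lambda \cap \Char(P)$ under $\Hp$. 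The $N^*\text{Diag}$ part of $k_\Q$ contributes the component on $\Lambda$, while the $\Lambdaz$ part contributes the flow-out component on $\Lambda^g$.

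For the principal symbol identity on $\Lambda^g \setminus \Lambda$, I would use the transport equation $\mathcal{L}_{\Hp} \sigma_p(\Q) + \iota\,\subsymb(P)\, \sigma_p(\Q) = 0$ for the principal symbol of $\Q$ along $\Lambdaz$. At a fixed $(x, \xi) \in \Lambda^g \setminus \Lambda$, the bicharacteristic through $(x, \xi)$ meets $\Lambda$ at the (by hypothesis) finitely many points $(y_j, \eta_j)$. The composition formula for Lagrangian distributions expresses $\sigma_p(\Q u)(x, \xi)$ as a sum of contributions, one per intersection point, of the symbol of $\Q$ transported along the bicharacteristic segment from $(y_j, \eta_j)$ to $(x, \xi)$, multiplied by $\sigma_p(u)(y_j, \eta_j)$; this is exactly the stated formula. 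The main obstacle I expect is bookkeeping the principal-symbol compatibility at the transition locus $N^*\text{Diag} \cap \Lambdaz$ (where the two Lagrangian components of $k_\Q$ meet) and the half-density and Maslov factors along each bicharacteristic branch; these are precisely the subtleties addressed in \cite{Greenleaf1993}, so I would invoke their composition theorem directly to close the argument rather than redo the symbolic bookkeeping from scratch.
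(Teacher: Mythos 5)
Your outline is correct, and it ultimately rests on the same foundation the paper does: the paper offers no independent proof of this proposition, citing it directly from Greenleaf--Uhlmann (Proposition 2.1 of \cite{Greenleaf1993}) and \cite{Lassas2018}, and your argument likewise defers the technical core (paired-Lagrangian membership and the symbol bookkeeping at the intersection of the two Lagrangians) to that composition theorem. The surrounding scaffolding you supply --- elliptic regularity off $\Char(P)$, propagation of singularities, causality forcing forward flow-out, and the transport equation yielding the sum over intersection points $(y_j,\eta_j)$ --- is the standard reconstruction of that result and contains no gaps.
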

}

On the other hand, we can symbolically construct the solution $v$ to (\ref{eq_lineareq}) directly by
[Hormander 2] and \cite[Proposition 6.6]{Melrose1979}, see also \cite[Theorem 3]{Chen2020}.
More precisely, let $\Lambda$ and $\Lambda^g$ be defined as in the proposition above.
When $f \in \Ical^\mu(\Lambda)$,
the solution $v \in \Ical^{\mu - \frac{3}{2},-\frac{1}{2}}(\Lambda, \Lambda^g)$ satisfies
\begin{align}
    \psymb(v)  =
    \psymb(P)^{-1} \psymb(f) &\quad \text{on } {\Lambda \cap \Char(P)},  \label{eq_Lambda0} \\
    \mathcal{L}_{\Hp} \psymb(v)  + \iota \subsymb(P)\psymb(v)  = 0
     &\quad \text{on } \Lambda^g, \label{eq_transp}
\end{align}
where the initial condition of (\ref{eq_transp})
is given by restricting (\ref{eq_Lambda0})
to $\partial \Lambda^g$,
see \cite[Section 4 and 6]{Melrose1979} and also \cite[Appendix A]{Chen2020}.

To solve (\ref{eq_transp}) more explicitly,
we fix a strictly positive half density $\omega$ on $\Lambda_g$, which is positively homogeneous of degree ${1}/{2}$.
This half density can be chosen by considering a Riemannian metric $g^+$ on $\tM$.
Indeed, $g^+$ induces a Sasaki metric $\varpi$ on $T^*\tM$ and one can consider the half density $|\varpi|^{\frac{1}{4}}$.
Now suppose $\psymb(v) =  a \omega$,  where $a$ is a smooth function on $\Lambda^g$.
Then we have
\[
\mathcal{L}_{\Hp}(a \omega) = (\Hp a) \omega + a \mathcal{L}_{\Hp}(\omega).
\]
The transport equation (\ref{eq_transp}) can be written as
\[
\Hp a + (\aomega + \iota \subsymb(P)) a = 0,
\]
where $\aomega = \omega^{-1} \mathcal{L}_{\Hp}(\omega)$.
Recall the Hamiltonian flow $\lambda(s) = (x(s), \zeta(s))$, where $x(s)$ is a null geodesic with $\zeta_i(s) = \frac{1}{2}g_{ij}\dot{x}^j(s)$.
Along $\lambda(s)$, we compute
\begin{align*}
    (\Hp a)\circ \lambda(s) = \ds(a \circ \lambda)(s).
\end{align*}
Thus, the transport equation (\ref{eq_transp}) along $\lambda(s)$ is given by
\begin{align*}
\ds(a \circ \lambda) + (\aomega + \iota \subsymb(P)) a\circ \lambda(s) = 0.
\end{align*}
Using equation (\ref{eq_psP}), we have
\begin{align*}
    \ds(a \circ \lambda) +
    (\aomega \circ x(s) - \langle b(x(s)), \zeta(s) \rangle) = 0.
\end{align*}
It has a unique solution
\[
a \circ \lambda(s) = a(x(s), \zeta(s)) = a(x, \zeta)
\exp(-\int_0^s  (\aomega \circ x(s') - \langle b(x(s')), \zeta(s') \rangle)  \diff s').
\]
We compute
\[
\langle b(x(s')), \zeta(s') \rangle = \langle b(x(s')), \frac{1}{2}g_{ij}\dot{x}^j(s)\rangle = \frac{1}{2} \langle b(x(s')), \dot{x}^j(s)\rangle.
\]
This implies that along $\lambda(s)$,
the principal symbol of $\Q$ at $(x(s), \zeta(s), x, \zeta) \in \Lambda^g$ is given by
\begin{align}\label{eq_Qsym}
\psymb(\Q)(x(s), \zeta(s), x, \zeta)
&= \exp(-\int_0^s \aomega \circ x(s') - \frac{1}{2}\langle b(x(s')), \dot{x}(s') \rangle \diff s'),
\end{align}
and we have
\begin{align*}
    \psymb(v)(x(s), \zeta(s)) = \frac{\omega((x(s), \zeta(s))}{\omega(x,\zeta}
    \psymb(\Q)(x(s), \zeta(s), x, \zeta)\psymb(v)(x, \zeta),
\end{align*}
where $\omega$ is the strictly positive half density on $\Lambda_g$.
By choosing $s_o$ with $0 \leq  s_o <  s$, we can show that
\begin{align}\label{eq_vps}
\psymb(v)(x(s), \zeta(s)) = \frac{\omega((x(s), \zeta(s))}{\omega(x(s_o),\zeta(s_o))}
\psymb(\Q)(x(s), \zeta(s), x(s_o), \zeta(s_o))\psymb(v)(x(s_o), \zeta(s_o)),
\end{align}
where we write
\begin{align*}
\psymb(\Q)(x(s), \zeta(s), x(s_o), \zeta(s_o))
&= \exp(-\int_{(s_o)}^s \aomega \circ x(s') - \frac{1}{2}\langle b(x(s')), \dot{x}(s') \rangle \diff s'),
\end{align*}
In particular, the principal symbol of $\Q$ satisfies the equation
\begin{align}\label{eq_Qtransport}
(\ds   + \aomega \circ x(s) - \frac{1}{2}\langle b(x(s)), \dot{x}^j(s) \rangle) \psymb(\Q)(x(s), \zeta(s), x(s_o), \zeta(s_o)) = 0.
\end{align}

\section{The third-order and fourth-order linearization}\label{sec_threefour}

In this section, we briefly recall some results in \cite[Section 3, 4, and 5]{UZ_acoustic}.
Let $(x_j, \xi_j)_{j=1}^J \subset L^+V$ be $J$ lightlike vectors, for $J = 3, 4$.
In some cases, we denote this triplet or quadruplet by $(\vec{x}, \vec{\xi})$.
We introduce the definition of regular intersection of three or
four null geodesics at a point $q$, as in \cite[Definition 3.2]{Kurylev2018}.
\begin{df}\label{def_inter}
    Let $J = 3$ or $4$.
    We say the null geodesics corresponding to
    $(x_j, \xi_j)_{j=1}^J$
    intersect regularly at a point $q$,  if
    \begin{enumerate}[(1)]
        \item there are $0 < s_j < \rho(x_j, \xi_j)$ such that $q = \gamma_{x_j, \xi_j}(s_j)$, for $j= 1, \ldots, J$,
        \item the vectors $\dot{\gamma}_{x_j, \xi_j}(s_j), j= 1, \ldots, J$ are linearly independent.
    \end{enumerate}
\end{df}

In this section, we consider lightlike vectors $(x_j, \xi_j)_{j=1}^J$ such that the corresponding null geodesics $\gamma_{x_j, \xi_j}(s)$  intersect regularly at $q \in \intM$, for $J = 3, 4$.
In addition, we suppose $(x_j, \xi_j)_{j=1}^J$ are causally independent, i.e.,
\begin{align}\label{assump_xj}
    x_j \notin J^+(x_k),
    \quad \text{ for } j \neq k.
\end{align}
Note the null geodesic $\gamma_{x_j, \xi_j}(s)$ starting from $x_j \in V$ could never intersect $M$ or could enter $M$ more than once.
Thus, we define
\begin{align}\label{def_bpep}
    t_j^o  = \inf\{s > 0 : \  \gamma_{x_j, \xi_j}(s) \in M \},
    \quad t_j^b  = \inf\{s > t_j^0 : \  \gamma_{x_j, \xi_j}(s) \in \tM \setminus M \}
\end{align}
as the first time when it enters $M$ and
the first time when it leaves $M$ from inside,
if such limits exist.

As in \cite{Kurylev2018}, to deal with the complications caused by the cut points, we consider the interaction of waves in the open set
\begin{align}\label{def_nxxi}
    \nxxi  = M \setminus \bigcup_{j=1}^J J^+(\gamma_{x_j, \xi_j}(\rho(x_j, \xi_j))),
\end{align}
which is the complement of the causal future of the first cut points.
In $\nxxi$, any two of the null geodesics $\gamma_{x_j, \xi_j}(\mathbb{R}_+)$ intersect at most once, by \cite[Lemma 9.13]{Beem2017}.
As in \cite{UZ_acoustic}, to deal with the complications caused by the reflection part, we consider the interaction of waves in the open set
\begin{align}\label{def_ntxxi}
    \ntxxi  = M \setminus \bigcup_{j=1}^J J^+(\gamma_{x_j, \xi_j}(t_j^b)),
\end{align}
as the complement of the causal future of the point $\gamma_{x_j, \xi_j}(t_j^b) \in \pM$,
where the null geodesic leaves $M$ from inside for the first time.

\subsection{Distorted plane waves and boundary sources}\label{sub_distorted}
Let $g^+$ be a Riemannian metric on $\tM$.
For each $(x_j, \xi_j) \in L^+ \tM$ and a small parameter $s_0 >0$,
we define
\begin{align*}
    \mathcal{W}({x_j, \xi_j, s_0}) &= \{\eta \in L^+_{x_j} \tM: \|\eta - \xi_j\|_{g^+} < s_0 \text{ with } \|\eta \|_{g^+} = \|\xi_j\|_{g^+}\}
\end{align*}
as a neighborhood of $\xi_j$ at the point $x_0$.
We define
\begin{align*}
    K({x_j, \xi_j, s_0}) &= \{\gamma_{x_j, \eta}(s) \in \tM: \eta \in \mathcal{W}({x_j, \xi_j, s_0}), s\in (0, \infty) \}
\end{align*}
be the subset of the light cone emanating from $x_0$ by light-like vectors in $\mathcal{W}({x_j, \xi_j, s_0})$.
As $s_0$ goes to zero, the surface $K({x_j, \xi_j, s_0})$ tends to the null geodesic $\gamma_{x_j, \xi_j}(\mathbb{R}_+)$.
Consider the Lagrangian submanifold
\begin{align*}
    \Sigma(x_j, \xi_j, s_0) =\{(x_j, r \eta^\flat )\in T^*\tM: \eta \in \mathcal{W}({x_j, \xi_j, s_0}), \ r\neq 0 \},
\end{align*}
which is a subset of the conormal bundle $N^*\{x_j\}$.
We define
\begin{align*}
    \Lambda({x_j, \xi_j, s_0})
    = &\{(\gamma_{x_j, \eta}(s), r\dot{\gamma}_{x_j, \eta}(s)^\flat )\in T^*\tM: 
    \eta \in \mathcal{W}({x_j, \xi_j, s_0}), s\in (0, \infty), r \in \mathbb{R}\setminus \{0 \} \}
\end{align*}
as the flow out from $\Char(\sq) \cap \Sigma(x_j, \xi_j, s_0)$ by the Hamiltonian vector field of $\sq$ in the future direction.
Note that $\Lambda({x_j, \xi_j, s_0})$ is the conormal bundle of $K({x_j, \xi_j, s_0})$ near $\gamma_{x_j, \xi_j}(\mathbb{R}_+)$,  before the first cut point of $x_j$.


Now we construct point sources $\tilde{f}_j \in \Ical^{\mu + 1/2}(\Sigma(x_j, \xi_j, s_0))$ at $x_j \in V$.
To construct distorted planes waves in $\tM$ from these sources, we would like to smoothly extend the unknown one-form $b(x)$ and the unknown potential $h(x)$ to a small neighborhood of $M$ in $\tM$, from the knowledge of the DN map.
Indeed, the jets of $b(x)$ and $h(x)$ are determined by the first-order linearization of $\LbF$, see Section \ref{subsec_boundary}.
For more details about the extension, see Section \ref{subsec_extension}.
Then we consider distorted plane waves
\[
u_j = \Q(\tilde{f}_j)  \in \Ical^\mu(\Lambda(x_j, \xi_j, s_0)), \quad j = 1, \ldots, J.
\]
Note that $u_j$ satisfies
\[
(\square_g + \lge b(x), \nbg \rge + h(x)) u_j \in C^\infty(M)
\]
with nonzero principal symbol along $(\gamma_{x_j, \xi_j}(s), (\dot{\gamma}_{x_j, \xi_j}(s))^\flat )$ for $s > 0$.
Since $u_j \in \mathcal{D}'(\tM)$ has no singularities conormal to $\partial M$,
then its restriction to the submanifold $\partial M$ is well-defined, see \cite[Corollary 8.2.7]{Hoermander2003}.
Thus, we set $f_j =  u_j|_{\partial M}$ and let $v_j$ solve the
boundary value problem
\begin{equation}\label{eq_v1}
    \begin{aligned}
        (\square_g + \lge b(x), \nbg \rge + h(x) )  v_j &= 0, & \  & \mbox{on } M ,\\
        v_j &= f_j, & \ &\mbox{on } \partial M,\\
        v_j &= 0,  & \  &\mbox{for } t <0.
    \end{aligned}
\end{equation}
It follows that $v_j  = u_j \mod C^\infty(M)$ and
we call $v_j$ the distorted plane waves.
We would like to consider the nonlinear problem (\ref{eq_problem}) with the Dirichlet data
$
f = \sum_{j=1}^J \ep_j f_j.
$
One can write the solution $p$ to (\ref{eq_problem}) as an asymptotic expansion with respect to $v_j$, following the same idea as in \cite[Section 3.6]{UZ_acoustic}.
%
More explicitly,
let $\Qbg$ be the solution operator to the boundary value problem \begin{equation}\label{bvp_qg}
    \begin{aligned}
        (\sq+ \lge b(x), \nbg \rge + h(x)) w &= l(x), & \  & \mbox{on } \Mo ,\\
        w &= 0 , & \  & \mbox{on } \pMo,\\
        w &= 0, & \  & \mbox{for } t <0.
    \end{aligned}
\end{equation}
That is, we write $w = \Qbg(l)$ if $l$ solves (\ref{bvp_qg}).
For more details about $\Qbg$, see \cite[Section 3.5]{UZ_acoustic}.
The same analysis implies that
\begin{align}\label{expand_u}
    p &= v + \sum_{m=1}{\Qbg( \beta_{m+1}(x) \partial_t^2 (p^{m+1}))}, \nonumber \\
    & = v + \sum_{i,j} \ep_i \ep_j A_2^{ij}
        + \sum_{i,j, k} \ep_i\ep_j\ep_k  A_3^{ijk}
        + \sum_{i,j, k,l} \ep_i\ep_j\ep_k \ep_l A_4^{ijkl}
         + \dots,
\end{align}
where we write
\begin{align}\label{eq_A}
    \begin{split}
        A_2^{ij} &= \Qbg(\beta_2 \partial_t^2(v_iv_j)),\\
        A_3^{ijk} &= \Qbg(2\beta_2 \partial_t^2(v_iA_2^{jk})+\beta_3 \partial_t^2(v_i v_j v_k))\\
        A_4^{ijkl} &= \Qbg(2 \beta_2 \partial_t^2(v_iA_3^{jkl}) + \beta_2\partial_t^2(A_2^{ij}A_2^{kl}) + 3\beta_3\partial_t^2(v_i v_j A_2^{kl}) + \beta_4\partial_t^2(v_iv_jv_kv_l)).
    \end{split}
\end{align}
Next, we can analyze the singularities of each term above using the calculus of conormal distributions.
For this purpose, we  write $K_j = K(x_j, \xi_j, s_0), \Lambda_j = \Lambda(x_j, \xi_j, s_0)$ and introduce the following notations
\[
\Lambda_{ij} = N^*(K_i \cap K_j), \quad \Lambda_{ijk} =  N^*(K_i \cap K_j \cap K_k), \quad \Lambda_q = T^*_q M \setminus 0.
\]
In addition, we define
\[
\Lambda^{(1)} = \cup_{j=1}^J \Lambda_j, \quad \Lambda^{(2)} = \cup_{i<j} \Lambda_{ij}, \quad
\Lambda^{(3)} =  \cup_{i<j<k} \Lambda_{ijk}.
\]
Let $\Theta^b_{y, \eta}$ be the broken bicharacteristic arc of $\square_g$ in $T^*M$.
The flow-out of $\Lambda^{(3)} \cap \Char(\sq)$ under the broken bicharacteristic arcs is denoted by
\[
\Lambda^{(3), b} = \{(z, \zeta) \in T^*M: \ \exists \ (y, \eta) \in \Lambda^{(3)} \text{ such that } (z, \zeta) \in \Theta^b_{y, \eta}
\},
\]
see Section \cite[Section 3.5]{UZ_acoustic} for more details.
We consider the set  
\begin{align*}
    \Gamma({\vec{x}, \vec{\xi}}, s_0) =  (\Lambda^{(1)} \cup \Lambda^{(2)} \cup \Lambda^{(3)} \cup \Lambda^{(3),b}) \cap T^*M,
\end{align*}
which depends on the parameter $s_0$ by definition.
Then we define
\begin{align}\label{def_Gamma}
    \unionGamma = \bigcap_{s_0>0}\Gamma({\vec{x}, \vec{\xi}}, s_0)
\end{align}
as the set containing all possible singularities {\rv
    produced by the interaction of at most three distorted plane waves. }

%
%

\subsection{The third-order linearization}
In this part, we consider the interaction of three distorted plane waves.
%
%
%
Let $(x_j, \xi_j), K_j, \Lambda_j, \tilde{f}_j, f_j, v_j$ be defined as above, for $j =1,2,3$.
Recall we assume the null geodesics corresponding to
$(x_j, \xi_j)_{j=1}^3$ intersect regularly at a fixed point $q \in \mathbb{W}$.
With sufficiently small $s_0$, we can assume the submanifolds $K_1, K_2, K_3$ intersect 3-transversally,
see \cite[Definition 2]{UZ_acoustic}.
Let $p$ solves (\ref{eq_problem}) with the Dirichlet data
$
f = \sum_{j=1}^3 \ep_j f_j.
$
We consider
\[
\Uthree = \depthree p \zepthree.
\]
By Section \ref{sub_distorted}, we have
\begin{align*}
    \Uthree
    =   \sum_{(i,j,k) \in \Sigma(3)} \Qbg(2\beta_2 \partial_t^2(v_iA_2^{jk})+\beta_3 \partial_t^2(v_i v_j v_k)).
\end{align*}
Note that $\Uthree$ is not the third order linearization of $\Lambda_F$ but they are related by
\begin{align*}
    \depthree \Lambda_{b,F} (f) {\zepthree} = (\partial_\nu \Uthree + \frac{1}{2}\lge b(x), \nu \rge)|_{\pMo}.
\end{align*}
For convenience, we introduce the trace operator ${R}$ on $\pM$, as in \cite{Hintz2020}.
It is an FIO and maps distributions in $\mathcal{E}'(M)$ whose singularities are away from $N^*(\pM)$ to $\mathcal{E}'(\pM)$, see \cite[Section 5.1]{Duistermaat2010}.
Notice for any timelike covector $(y_|, \eta_|) \in T^* \partial M \setminus 0$, there is exactly one outward pointing lightlike covector $(y, \eta^+)$ and one inward pointing lightlike covector $(y, \eta^-)$ satisfying $y_| = y,\  \eta_| = \eta^\pm|_{T^*_{y} \partial M}$.
The trace operator ${R}$ has
a nonzero principal symbol at such $(y_|, \eta_|, y, \eta^+)$ or $(y_|, \eta_|, y, \eta^-)$.
Combining \cite[Lemma 6]{UZ_acoustic} and \cite[Proposition 5]{UZ_acoustic}, we have the following proposition.
\begin{pp}[{\cite[Proposition 5]{UZ_acoustic}}]\label{pp_uthree}
Let $(y, \eta) \in \LcMpo$ be a covector
lying along the forward null-bicharacteristic starting from
$(q, \zeta) \in \Lambda_{ijk}$.
Suppose $y \in \nxxi \cap \ntxxi$ and $(y, \eta)$ is away from $\Lambda^{(1)}$.
Then we have
\begin{align*} 
    &{\sigmp}(\Uthree)(y, \eta)
    = 2  (2\pi)^{-2}  {\sigmp}(\Q)(y, \eta, q, \zeta)
    (\zeta_0)^2
    (-2 \beta^2_2  - \beta_3)
    \prod_{j=1}^3{\sigmp}(v_m) (q, \zeta^j).
\end{align*}
Let $(y_|, \eta_|)$ be the projection of $(y, \eta)$ on the boundary.
Moreover, we have
\begin{align}\label{eq_LambdaU3}
{\sigmp}({\partial_{\epsilon_1}\partial_{\epsilon_2}\partial_{\epsilon_3} \LbF |_{\epsilon_1 = \epsilon_2 = \epsilon_3=0}})(y_|, \eta_|) = \iota  \lge \nu, \eta \rge_g
\sigmp(R)(y_|, \eta_|, y, \eta)
{\sigmp}(\Uthree)(y, \eta).
\end{align}
\end{pp}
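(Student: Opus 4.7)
The plan is to compute $\sigma_p(\Uthree)$ along the forward flow-out of $(q,\zeta)$ in two stages: first identify the principal symbol of the source produced at $q$ by the three-wave interaction, then propagate it to $(y,\eta)$ through $\Qbg$, and finally transfer the interior computation to the boundary via the trace $R$.

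For the source at $q$, I would unpack (\ref{eq_A}) to write $\Uthree = \sum_{(i,j,k)\in\Sigma(3)} \Qbg\bigl(2\beta_2 \partial_t^2(v_i A_2^{jk}) + \beta_3 \partial_t^2(v_i v_j v_k)\bigr)$ and exploit the 3-transversality of $K_i,K_j,K_k$ at $q$. The triple product $v_iv_jv_k$ is conormal along $N^*\{q\}$ near $q$, with principal symbol equal, up to the universal $(2\pi)^{-2}$ factor for triple products of transversal conormals, to $\prod_j \sigma_p(v_m)(q,\zeta^j)$; the $\partial_t^2$ contributes $(\iota\zeta_0)^2=-(\zeta_0)^2$. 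For the $v_i A_2^{jk}$ piece, one uses the paired-Lagrangian calculus: $A_2^{jk}$ lies in $\Ical^{p,l}(N^*(K_j\cap K_k),N^*(K_j\cap K_k)^g)$, and multiplying by $v_i$ followed by restriction to $N^*\{q\}$ contributes an extra $-\beta_2\prod_j\sigma_p(v_m)(q,\zeta^j)$ via the flow-out symbol of $\Qbg$ between the two intersecting Lagrangians. Summing over the six permutations and combining $\beta_2^2$ and $\beta_3$ contributions gives the coefficient $(-2\beta_2^2-\beta_3)(\zeta_0)^2$.

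Next, I would propagate this source from $(q,\zeta)$ to $(y,\eta)$ by the outer $\Qbg$. The assumption $y\in\nxxi\cap\ntxxi$ rules out cut points and boundary reflections on the segment from $q$ to $y$, so $\Qbg$ acts as the free causal inverse $Q$ on the bicharacteristic; the assumption $(y,\eta)\notin\Lambda^{(1)}$ ensures we are reading off a genuinely new singularity. The contribution of $b$ and $h$ enters only through the transport equation (\ref{eq_Qtransport}), producing the exponential weight incorporated into $\sigma_p(\Q)(y,\eta,q,\zeta)$ in (\ref{eq_Qsym}). Composing this with the source symbol yields the displayed formula for $\sigma_p(\Uthree)(y,\eta)$.

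For the boundary identity (\ref{eq_LambdaU3}), I would write
\begin{align*}
\partial_{\epsilon_1}\partial_{\epsilon_2}\partial_{\epsilon_3}\LbF\zepthree = \bigl(\partial_\nu + \tfrac{1}{2}\lge b,\nu\rge\bigr)\Uthree\big|_{\pMo},
\end{align*}
observe that the $\tfrac{1}{2}\lge b,\nu\rge$ term is of strictly lower order and contributes nothing to the principal symbol, and note that $\partial_\nu$ applied at the outward pointing lightlike lift $(y,\eta)$ of $(y_|,\eta_|)$ contributes the symbol factor $\iota\lge\nu,\eta\rge_g$. Composing with the trace $R$, whose principal symbol at $(y_|,\eta_|,y,\eta)$ is nonvanishing, gives (\ref{eq_LambdaU3}). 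The main obstacle is the combinatorial and paired-Lagrangian bookkeeping for the $v_iA_2^{jk}$ terms; in the undamped case this is exactly the content of \cite[Lemma 6]{UZ_acoustic} and \cite[Proposition 5]{UZ_acoustic}, and the damping enters only through (\ref{eq_Qsym}) without altering the underlying wavefront structure.
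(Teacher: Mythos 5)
Your proposal is correct and follows essentially the same route as the paper, which does not reprove this proposition but imports it directly from \cite[Lemma 6 and Proposition 5]{UZ_acoustic}: source symbol at $(q,\zeta)$ from the transversal triple interaction, propagation to $(y,\eta)$ via the flow-out symbol of $\Q$ (where the one-form enters only through the transport equation (\ref{eq_Qtransport})), and restriction to the boundary through $R$ with the lower-order $\tfrac12\lge b,\nu\rge$ term dropped. The only piece you leave implicit is the symbol identity $\sum_{(i,j,k)\in\Sigma(3)}(\zeta_0^j+\zeta_0^k)^2/|\zeta^j+\zeta^k|^2_{g^*}=-1$ needed to collapse the $v_iA_2^{jk}$ contributions to the clean coefficient $-2\beta_2^2$, but that is exactly the bookkeeping in the cited reference to which both you and the paper defer.
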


We emphasize that we cannot ignore the term ${\sigmp}(\Q)(y, \eta, q, \zeta)$ and
$\prod_{j=1}^3{\sigmp}(v_j) (q, \zeta^j)$ to recover the nonlinear coefficients, since
the unknown one-form $b(x)$ will affect these terms.
In particular,
by (\ref{eq_vps}) we can write
\begin{align*}
\psymb(v_j)(q, \zeta^j) = \frac{\omega(q, \zeta^j)}{\omega(x^o_j, (\xi^o_j)^\sharp)}
\psymb(\Q)(q, \zeta^j, x^o_j, (\xi^o_j)^\sharp))
\psymb(v)(x^o_j, (\xi^o_j)^\sharp)),
\end{align*}
where $\omega$ is a fixed strictly positive half density on the flow-out and
\[
(x^o_j, (\xi^o_j)^\sharp) = (\gamma_{x_j, \xi_j}(t^o_j), (\dot{\gamma}_{x_j, \xi_j}(t^o_j))^\sharp)
\]
with $t^o_j$ defined in (\ref{def_bpep}).
Thus, for fixed $\zeta^j \in L_q^*M, j=1,2,3$,
we can expect to recover the quantity
\begin{align}\label{eq_M3}
M_3(q, \zone, \ztwo, \zthree) = 
(-2 \beta^2_2  - \beta_3) {\sigmp}(\Q)(y, \eta, q, \zeta)\prod_{j=1}^3{\sigmp}(\Q)(q, \zeta, x^o_j, (\xi^o_j)^\sharp)
\psymb(v)(x^o_j, (\xi^o_j)^\sharp),
\end{align}
for more details about the recovery, see Section \ref{sec_recover_b}.
\subsection{The forth-order linearization}
In this part, we consider the interaction of four distorted plane waves.
Let $(x_j, \xi_j), K_j, \Lambda_j,\tilde{f}_j,  f_j, v_j$ be defined as above, for $j =1,2,3,4$.
Now we assume the null geodesics corresponding to
$(x_j, \xi_j)_{j=1}^4$ intersect regularly at a fixed point $q \in \mathbb{W}$.
With sufficiently small $s_0$, we can assume the submanifolds $K_1, K_2, K_3, K_4$ intersect 4-transversally, see \cite[Definition 2]{UZ_acoustic}.
Let $p$ solves (\ref{eq_problem}) with the Dirichlet data
$
f = \sum_{j=1}^4 \ep_j f_j.
$
We consider
\[
\Ufour = \depfour p {\zepfour}.
\]
By (\ref{eq_A}), we have
\begin{align*}
    \Ufour
    =   \sum_{(i,j,k,l) \in \Sigma(4)} \Qbg(2 \beta_2 \partial_t^2(v_iA_3^{jkl}) + \beta_2\partial_t^2(A_2^{ij}A_2^{kl}) + 3\beta_3\partial_t^2(v_i v_j A_2^{kl}) + \beta_4\partial_t^2(v_iv_jv_kv_l)).
\end{align*}
Note that $\Ufour$ is not the forth-order linearization of $\LbF$ but they are related by
\begin{align*}
    \depfour \LbF (f) {\zepfour} = (\partial_\nu \Ufour + \frac{1}{2}\lge b(x), \nu \rge)|_{\pMo}.
\end{align*}
\begin{pp}[{\cite[Proposition 6]{UZ_acoustic}}]\label{pp_ufour}
    Let $(y, \eta) \in \LcMpo$ be a covector
    lying along the forward null-bicharacteristic starting from
    $(q, \zeta)  \in \Lambda_q$.
    Suppose $y \in \nxxi \cap \ntxxi$
    and $(y, \eta)$ is away from $\unionGamma$.
    Then we have
        \begin{align*}
        &{\sigmp}(\Ufour)(y, \eta)
        = 2 (2\pi)^{-3} {\sigmp}({Q}_g)(y, \eta, q, \zeta)  (\zeta_0)^2  \mathcal{C}(\zeta^{(1)}, \zeta^{(2)}, \zeta^{(3)}, \zeta^{(4)})
        (\prod_{j=1}^4 {\sigmp}(v_j) (q, \zeta^{(j)})),
    \end{align*}
    where we write
    \begin{align*}
        \mathcal{C}(\zone, \ztwo, \zthree, \zfour)
        = &\sum_{(i,j,k,l) \in \Sigma(4)}
        -(4\frac{(\ziz + \zjz + \zkz)^2}{|\zi + \zj + \zk|^2_{g^*}} + \frac{(\ziz+\zlz)^2}{| \zi + \zl|^2_{g^*}})
        \frac{(\zjz + \zkz)^2}{|\zj + \zk|^2_{g^*}}  \beta_2^3 + \nonumber \\
        & \quad \quad \quad \quad \quad \quad \quad
        + (3 \frac{(\zkz+\zlz)^2}{| \zk + \zl|^2_{g^*}} + 2\frac{(\ziz + \zjz + \zkz)^2}{|\zi + \zj + \zk|^2_{g^*}}) \beta_2 \beta_3
        -\beta_4.
    \end{align*}
    Let $(y_|, \eta_|)$ be the projection of $(y, \eta)$ on the boundary.
    Moreover, we have
    \begin{align}\label{eq_LambdaU4}
    {\sigmp}(\epslam)(y_|, \eta_|) =
    \iota  \lge \nu, \eta \rge_g
    \sigmp({R})(y_|, \eta_|, y, \eta)
    {\sigmp}(\Ufour)(y, \eta).
    \end{align}
\end{pp}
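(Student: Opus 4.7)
The plan is to expand $\Ufour$ via (\ref{eq_A}) into its four structural pieces—a quartic term proportional to $\beta_4$, a cross term proportional to $\beta_3\beta_2$, and two distinct cubic pieces in $\beta_2$ (one from $v_iA_3^{jkl}$, one from $A_2^{ij}A_2^{kl}$)—and compute each contribution to $\sigmp(\Ufour)(y,\eta)$ separately, using the calculus of conormal and paired Lagrangian distributions together with the symbol law (\ref{eq_vps}) for $\Qbg$, in exactly the same microlocal spirit as the proof of Proposition \ref{pp_uthree}. The observation that makes the computation tractable is that $(y,\eta) \notin \unionGamma$ forces $(y,\eta)$ to lie only on the flowout of the fourfold intersection at $q$: any singularity created on an intermediate $\Lambda^{(2)}$, $\Lambda^{(3)}$, or their broken extensions $\Lambda^{(3),b}$ is excluded at the target covector, so only the ``fully-interacted'' principal symbol at $q$ propagates to $(y,\eta)$.

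For each structural piece I would extract the principal symbol at $q$ with the conormal product formula, then apply (\ref{eq_vps}) to propagate along the bicharacteristic to $(y,\eta)$. The quartic term $\Qbg(\beta_4 \partial_t^2(v_iv_jv_kv_l))$ is conormal to $\{q\}$ at the total covector $\zeta = \sum_j\zeta^{(j)}$, contributes a factor $-\beta_4\,(\zzone+\zztwo+\zzthree+\zzfour)^2$ from $\partial_t^2$, and yields the $-\beta_4$ contribution in $\mathcal{C}$. The cross term $3\beta_3 \partial_t^2(v_iv_jA_2^{kl})$ requires first computing $\sigmp(A_2^{kl})$ at the intermediate covector $\zeta^{(k)}+\zeta^{(l)}$: by (\ref{eq_Lambda0}) the symbol factor is $\beta_2(\zkz+\zlz)^2/|\zk+\zl|^2_{g^*}$ times $\sigmp(v_k)\sigmp(v_l)$ times $\sigmp(\Q)$ on the flowout; then the triple product $v_iv_jA_2^{kl}$ at $q$, the final $\partial_t^2$ giving $(\zeta_0)^2$, and a last $\Qbg$ produce the $3(\zkz+\zlz)^2/|\zk+\zl|^2_{g^*}\cdot\beta_2\beta_3$ term.

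The two $\beta_2^3$ pieces proceed analogously. For $\Qbg(\beta_2\partial_t^2(A_2^{ij}A_2^{kl}))$, two separate two-wave interactions contribute intermediate symbol factors $\beta_2(\ziz+\zjz)^2/|\zi+\zj|^2_{g^*}$ and $\beta_2(\zkz+\zlz)^2/|\zk+\zl|^2_{g^*}$ which multiply at $q$; for $\Qbg(2\beta_2\partial_t^2(v_iA_3^{jkl}))$, the triple interaction $A_3^{jkl}$ is itself built by the logic of Proposition \ref{pp_uthree} and contributes either a $\beta_2^2(\zjz+\zkz+\zlz)^2/|\zj+\zk+\zl|^2_{g^*}$ factor (producing the $4(\cdot)^2/|\cdot|^2_{g^*}\beta_2^3$ piece after accounting for the pairing multiplicities) or a $\beta_3(\zjz+\zkz+\zlz)^2/|\zj+\zk+\zl|^2_{g^*}$ factor (producing the $2(\cdot)^2/|\cdot|^2_{g^*}\beta_2\beta_3$ piece). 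Summing over $(i,j,k,l)\in\Sigma(4)$ and regrouping by monomials in $\beta_m$ reproduces $\mathcal{C}(\zone,\ztwo,\zthree,\zfour)$, multiplied by the prefactor $2(2\pi)^{-3}(\zeta_0)^2\sigmp(\Q)(y,\eta,q,\zeta)\prod_j\sigmp(v_j)(q,\zeta^{(j)})$. Finally, (\ref{eq_LambdaU4}) follows by applying the boundary trace $R$, contributing $\sigmp(R)(y_|,\eta_|,y,\eta)$, and the normal derivative $\partial_\nu$, contributing $\iota\langle\nu,\eta\rangle_g$, exactly as in (\ref{eq_LambdaU3}).

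The hard part will be the combinatorial bookkeeping: each of the four structural terms requires a careful inventory of which permutations in $\Sigma(4)$ yield which pairings of intermediate covectors, and one must verify cleanly at each stage that the relevant intersections are transverse (for $A_2^{ij}A_2^{kl}$ this requires the two pairs of waves to intersect cleanly at $q$, not just pairwise) so that the paired Lagrangian calculus applies and the principal-symbol product rule is valid. One must also check that intermediate sums like $\zeta^{(i)}+\zeta^{(j)}$, $\zeta^{(j)}+\zeta^{(k)}+\zeta^{(l)}$ remain non-lightlike so that $\psymb(P)^{-1} = |\cdot|_{g^*}^{-2}$ is well defined on the relevant Lagrangian; this is precisely what $(y,\eta)\notin\unionGamma$ guarantees together with the 4-transversality of $K_1,\ldots,K_4$ at $q$ inherited from a sufficiently small parameter $s_0$.
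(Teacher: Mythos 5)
This proposition is not proved in the paper at all---it is imported verbatim from \cite[Proposition 6]{UZ_acoustic}---and your outline reconstructs essentially the argument of that reference: expand $\Ufour$ via (\ref{eq_A}) into the four structural terms, compute the principal symbol of each at $(q,\zeta)$ with the conormal/paired-Lagrangian product calculus, and propagate to $(y,\eta)$ via the symbol of $\Q$, with $(y,\eta)\notin\unionGamma$ eliminating every contribution except the full four-wave interaction. One correction to your last paragraph: the fact that the intermediate sums $\zeta^{(i)}+\zeta^{(j)}$ and $\zeta^{(j)}+\zeta^{(k)}+\zeta^{(l)}$ are non-lightlike is \emph{not} a consequence of $(y,\eta)\notin\unionGamma$ (that hypothesis constrains only the output covector and where it may sit relative to the flow-outs of lower-order interactions); it comes from the linear independence and genericity of the four covectors $\zeta^{(j)}$ at $q$ supplied by the $4$-transversality of $K_1,\dots,K_4$ for small $s_0$. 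Beyond that, the combinatorial multiplicities ($4$, $2$, $3$) and the constant $2(2\pi)^{-3}$ are asserted rather than derived, so your text is a correct plan rather than a complete proof, but the route is the right one.
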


Similarly we cannot ignore the term ${\sigmp}(\Q)(y, \eta, q, \zeta)$ and
$\prod_{j=1}^4{\sigmp}(v_j) (q, \zetam)$ to recover the nonlinear coefficients.
In particular,
by (\ref{eq_vps}) we can write
\begin{align*}
    \psymb(v_j)(q, \zetam) = \frac{\omega(q, \zeta^j)}{\omega(x_j, \xi_j^\sharp)}
    \psymb(\Q)(q, \zetam, x_j, \xi_j^\sharp)\psymb(v_j)(x_j, \xi_j^\sharp),
\end{align*}
where $\omega$ is a fixed strictly positive half density on the flow-out.
Thus,
for fixed $\zeta^j \in L_q^*M, j=1,2,3,4$,
we can expect to recover the quantity
\begin{align}\label{eq_M4}
&M_4(q, {\zone}, {\ztwo}, {\zthree}, {\zfour})\\
&= \mathcal{C}({\zone}, {\ztwo}, {\zthree}, {\zfour}) {\sigmp}(\Q)(y, \eta, q, \zeta)\prod_{j=1}^4{\sigmp}(\Q)(q, \zeta, x^o_j, (\xi^o_j)^\sharp)\psymb(v_j)(x^o_j, (\xi^o_j)^\sharp). \nonumber
\end{align}


%
\section{The recovery of  the one-form and the nonlinearity}\label{sec_recover_b}

In this section, we would like to recover the one-form $b(x)$
at any point
in the suitable larger set
\[
\Wset,
\]
by combining the third-order and forth-order linearization of the DN map.
More explicitly, let $q \in \mathbb{W}$ be fixed.
For a covector $\zeta^o \in L_q^{*,\pm}M$, we denote by 
\[
N^\pm(\zeta^o, \varsigma) = \{\zeta \in L_q^{*,\pm}M: \|\zeta-\zeta^o\| < \varsigma\}
\]
a conic neighborhood of $\zeta^o$ containing lightlike covectors
with small parameter $\varsigma>0$.
Similarly, we denote the conic neighborhood for a lightlike vector $w \in L^\pm_qM$ by $N^\pm(w, \varsigma)$.

The following lemma in \cite{UZ_acoustic} shows that one can perturb a lightlike vector to choose another one that are corresponding to null geodesic segments without cut points.
Here recall $V = \V$ is the open set where we construct virtual point sources and send distorted plane waves.
\begin{lm}[{\cite[Lemma 4]{UZ_acoustic}}]\label{lm_perturb_zeta1}
    Let $q \in \mathbb{W}$ and $\zhone \in L^{*,+}_q M$. 
    Suppose there is $(x_1, \xi_1) \in L^{+} V$ with
    \[
    (q, \zhone) = (\gamma_{x_1, \xi_1}(s_1), (\dot{\gamma}_{x_1, \xi_1}(s_1))^\flat), \quad 0 < s_1 < \rho(x_1, \xi_1).
    \]
    Then we can find  $\varsigma >0$ such that for any $\zhtwo \in N^+(\zhone, \varsigma)$, there exists a vector
    $(x_2, \xi_2) \in L^{+} V$ with
    \[
    (q, \zhtwo) = (\gamma_{x_2, \xi_2}(s_2), (\dot{\gamma}_{x_2, \xi_2}(s_2))^\flat), \quad 0 < s_2 < \rho(x_2, \xi_2).
    \]
    Moreover, one has $(x_1, \xi_1)$ and $(x_2, \xi_2)$ are causally independent.
\end{lm}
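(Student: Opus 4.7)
\medskip

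\noindent\textbf{Proof plan.} The strategy is to build $(x_2,\xi_2)$ by reversing the null geodesic through $(q,\zhtwo)$, i.e.\ by following $\gamma_{q,-\zhtwo^{\sharp}}$ into the past and picking $x_2$ at a carefully chosen parameter inside $V$. The point $x_1$ is recovered from the data by $x_1=\gamma_{q,-\zhone^{\sharp}}(s_1)$ with future-pointing tangent covector $\xi_1=-\bigl(\dot\gamma_{q,-\zhone^{\sharp}}(s_1)\bigr)^{\flat}$, so the construction is the natural perturbation of this identity.

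First I would invoke continuous dependence of the geodesic flow on initial data: the map $\zhtwo\mapsto \gamma_{q,-\zhtwo^{\sharp}}(\cdot)$, together with its first derivative, varies continuously in $\zhtwo$, uniformly on the compact parameter interval $[0,s_1+\delta]$ for any fixed small $\delta>0$. Since $V$ is open in $\tM$ and contains $\gamma_{q,-\zhone^{\sharp}}(s_1)=x_1$, there exists $\varsigma_1>0$ such that $\gamma_{q,-\zhtwo^{\sharp}}(s)\in V$ for all $\zhtwo\in N^+(\zhone,\varsigma_1)$ and all $s$ in an open neighborhood of $s_1$.

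Next, to secure causal independence, I would pick the parameter $s_2=s_2(\zhtwo)$ so that the time coordinate of $x_2:=\gamma_{q,-\zhtwo^{\sharp}}(s_2)$ equals $t(x_1)$. This is possible because the time function $t$ is strictly decreasing along the past-directed causal geodesic $\gamma_{q,-\zhtwo^{\sharp}}$, so the equation $t(\gamma_{q,-\zhtwo^{\sharp}}(s))=t(x_1)$ has a unique solution $s_2$, and by the implicit function theorem $s_2\to s_1$ as $\zhtwo\to\zhone$. Shrinking $\varsigma$ if needed we keep $x_2\in V$. In the globally hyperbolic manifold $(\tM,\tg)$, two distinct points on the same Cauchy level set of $t$ cannot be causally related; hence $x_1\notin J^+(x_2)$ and $x_2\notin J^+(x_1)$, provided $x_1\neq x_2$, which holds whenever $\zhtwo\neq\zhone$. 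Defining $\xi_2:=-\bigl(\dot\gamma_{q,-\zhtwo^{\sharp}}(s_2)\bigr)^{\flat}$ one checks directly that $\gamma_{x_2,\xi_2}(s_2)=q$ and $\bigl(\dot\gamma_{x_2,\xi_2}(s_2)\bigr)^{\flat}=\zhtwo$.

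It remains to ensure $0<s_2<\rho(x_2,\xi_2)$. The positivity is immediate from $s_2\to s_1>0$. For the upper bound I would use lower semicontinuity of the null cut locus function on the bundle of future lightlike vectors (see \cite[Section 9]{Beem2017}), which gives $\liminf_{\zhtwo\to\zhone}\rho(x_2,\xi_2)\ge \rho(x_1,\xi_1)>s_1$; combined with $s_2\to s_1$ this yields $s_2<\rho(x_2,\xi_2)$ after a final shrinking of $\varsigma$. Taking $\varsigma>0$ less than the minimum of the three tolerances produced above finishes the proof. The main subtlety I anticipate is the simultaneous control of the cut-locus condition with the causal-independence condition; this is handled cleanly by fixing the time coordinate of $x_2$, which decouples the cut-locus estimate (an open condition near $(x_1,\xi_1)$) from the causal condition (automatic from the time-slice choice).
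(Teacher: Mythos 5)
Your proposal is correct and follows essentially the same route as the cited proof: the paper imports this lemma from [UZ\_acoustic, Lemma 4], and its own use of that argument (in the proof of Lemma \ref{lm_perturb_zeta_three}) confirms the key device is exactly yours, namely running the null geodesic backwards from $(q,\zhtwo)$ and choosing $x_2$ on the time slice $\mathbf{t}(x_2)=\mathbf{t}(x_1)$ so that causal independence is automatic, with continuity of the flow and (lower semi)continuity of the cut locus function handling membership in $V$ and the bound $s_2<\rho(x_2,\xi_2)$. No substantive gap; only the degenerate case $\zhtwo=\zhone$ needs to be excluded, as you note.
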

\subsection{Construction for the third-order linearization}\label{subsec_three}
In this subsection, we claim that
for any fixed point $q \in \mathbb{W}$,
one can find a set of lightlike vectors $\{(x_j, \xi_j)\}_{j=1}^3$ in $V$ and a lightlike covector $\zeta$ at $q$, which are corresponding to null geodesics intersecting regularly at $q$.
More precisely, the lightlike vectors $\{(x_j, \xi_j)\}_{j=1}^3$ are corresponding to three incoming null geodesics
and the lightlike covector $\zeta$ at $q$ is corresponding to the new singularities produced by the interaction of three distorted plane waves.
When $s_0 >0$ is small enough,
the covector $\zeta$ can be chosen away from the singularities caused by the interaction of at most two waves.
Then $(q, \zeta)$ is corresponding to an outgoing null geodesic and we would like to find a lightlike vector $(y, \eta)$ in $V$ along this null geodesic before its first cut point.
\begin{claim}\label{cl_const_three1}
Suppose $q \in \mathbb{W}$ and $s_0>0$ is sufficiently small.
Then one can find \[
{\rv \{(x_j, \xi_j)\}_{j=1}^3} \subset L^+V, \quad
\zeta \in \Lambda_{123}\setminus (\Lambda^{(1)} \cup \Lambda^{(2)}),
\quad (y, \eta) \in \LcMpo,
\]
such that
\begin{enumerate}[(a)]
    \item $(x_j, \xi_j), j = 1,2,3$ are causally independent as in  (\ref{assump_xj}) and the null geodesics starting from them intersect regularly at $q$ (see Definition \ref{def_inter}),
    satisfying
    $\zeta$ is in the span of $(\dot{\gamma}_{x_j, \xi_j}(s))^\flat$ at $q$; 
    \item each $\gamma_{x_j, \xi_j}(\mathbb{R}_+)$ hits $\partial M$ exactly once and transversally before it passes $q$;
    \item $(y, \eta) \in  \LcMpo$ lies in the  bicharacteristic from $(q, \zeta)$ and additionally there are no cut points along $\gamma_{q, \zeta^\sharp}(s)$ from $q$ to $y$.
\end{enumerate}
\end{claim}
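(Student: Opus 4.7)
The plan is to build the configuration starting from $q$ and working outward toward $V$. Since $q \in \mathbb{W}$, there exist boundary points $y^-, y^+ \in \pMo$ with $q \in I(y^-, y^+)$, which guarantees that a nontrivial open cone of past-directed causal covectors at $q$ flows back through $\partial M$ into $V$, and similarly for the forward direction. I would first pick three linearly independent past-pointing null covectors $\hat{\zeta}^{1,-}, \hat{\zeta}^{2,-}, \hat{\zeta}^{3,-} \in L^{*,-}_qM$ lying close to the past-timelike direction pointing toward $y^-$, so that each past null geodesic $\gamma_{q, (\hat{\zeta}^{j,-})^\sharp}$ exits $M$ transversally through $\partial M$ (using timelikeness of $\partial M$ and genericity of directions) and then enters $\tO \setminus \Omega$. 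I then choose $(x_j, \xi_j) \in L^+V$ to be a point along the extended geodesic with future-pointing direction, so that $\gamma_{x_j, \xi_j}(s_j) = q$ with $(\dot{\gamma}_{x_j, \xi_j}(s_j))^\flat = \hat{\zeta}^{j,+}$, the future-pointing reversal of $\hat{\zeta}^{j,-}$. Linear independence of the three future-pointing tangents at $q$ gives regular intersection as in Definition \ref{def_inter}.

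The causal independence condition $x_j \notin J^+(x_k)$ is then arranged by shifting each $x_j$ in the time direction within $V$; three linearly independent null directions from $q$ give three distinct arrival patterns in $V$, and pairwise causal independence holds generically. The no-cut-point condition $s_j < \rho(x_j, \xi_j)$ can be arranged by choosing $x_j$ sufficiently close (along the geodesic) to the boundary crossing, and then applying Lemma \ref{lm_perturb_zeta1} to perturb if needed; transversal single crossing of $\partial M$ before $q$ follows from the null-convexity assumption and the shortness of the travel segment.

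To construct $\zeta$, I would seek it in the form $\zeta = c_1 \hat{\zeta}^{1,+} + c_2 \hat{\zeta}^{2,+} + c_3 \hat{\zeta}^{3,+}$, which automatically lies in $\Lambda_{123}$ since the three future-pointing tangents span the conormal of $K_1 \cap K_2 \cap K_3$ at $q$ (a codimension-$3$ submanifold by transversality, for small enough $s_0$). The lightlike condition $g^*(\zeta, \zeta) = 0$ is a single quadratic equation in $(c_1, c_2, c_3)$, and since in a $4$-dimensional Lorentzian space any null covector can be written as a combination of three linearly independent null covectors, this equation has a real solution locus with all $c_j \neq 0$. Ensuring $c_j \neq 0$ and the combination is not proportional to any single $\hat{\zeta}^{j,+}$ or to a sum of only two of them places $\zeta \notin \Lambda^{(1)} \cup \Lambda^{(2)}$.

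Finally, for $(y, \eta)$ I flow forward from $(q, \zeta^\sharp)$. Since $q \in I^-(y^+)$ and $\partial M$ is timelike and null-convex, by choosing $\zeta$ in the connected component of future-pointing null covectors pointing generically toward $y^+$, the null geodesic $\gamma_{q, \zeta^\sharp}$ reaches $\partial M$ transversally at some first point $y$ in finite time; taking the outward-pointing lightlike covector along this bicharacteristic gives $(y, \eta) \in \LcMpo$. The no-cut-point condition along the segment from $q$ to $y$ again holds by a short-travel argument, shrinking the configuration if necessary. The main obstacle I anticipate is juggling all conditions at once, in particular verifying that the lightlike $\zeta$ can be chosen to simultaneously (i) avoid the proper sub-Lagrangians $\Lambda^{(1)} \cup \Lambda^{(2)}$, (ii) propagate forward to hit $\partial M$ transversally, and (iii) stay before cut points on both past and future legs — which ultimately hinges on having enough freedom in choosing the three past directions and the coefficients $c_j$, combined with the fact that $q$ sits in the open set $\mathbb{W}$ where both past and future boundary connections are available.
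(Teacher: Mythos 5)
Your construction runs in the wrong order, and this produces a genuine gap at condition (c). You fix the three past directions $\hat{\zeta}^{j,-}$ first (each chosen so that the past null geodesic from $q$ exits into $V$ before a cut point), and only afterwards solve the quadratic constraint $|\zeta|_{g^*}^2=0$ inside $\operatorname{span}\{\hat{\zeta}^{1,+},\hat{\zeta}^{2,+},\hat{\zeta}^{3,+}\}$ to produce $\zeta$. But whatever $\zeta$ falls out of that algebra, you have no control over whether the forward null geodesic $\gamma_{q,\zeta^\sharp}$ hits $\partial M$ before its first cut point $\rho(q,\zeta)$, or indeed whether it exits $M$ toward $V$ at all. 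Your fallback --- ``no-cut-point condition \dots holds by a short-travel argument, shrinking the configuration if necessary'' --- does not apply, because $q$ is a fixed interior point of $M$ and the distance from $q$ to $\partial M$ along $\gamma_{q,\zeta^\sharp}$ is not something you can shrink; the cut locus function $\rho(q,\zeta)$ is determined by the geometry and could in principle occur strictly inside $M$. The paper resolves exactly this by reversing the order: it fixes \emph{both} $\zeta$ and $\hat{\zeta}^1$ first, each coming from \cite[Lemma 3.5]{Kurylev2018} applied in opposite time directions, so that both the backward geodesic from $(q,\hat{\zeta}^1)$ and the forward geodesic from $(q,\zeta)$ reach $V$ strictly before their respective cut points. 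Only then does it invoke Lemma \ref{lm_perturb_zeta_three}, an explicit coordinate construction ($\vartheta_2,\vartheta_3$ as small perturbations of $\vartheta_1$ in the plane containing $\theta$ and $w=\zeta^\sharp$) that produces $\hat{\zeta}^2,\hat{\zeta}^3$ near $\hat{\zeta}^1$ with the prescribed $\zeta$ already in their span. That lemma is precisely the tool you are missing: it lets one prescribe the forward covector in advance, which is what makes the ``diamond'' definition of $\mathbb{W}$ usable for (c). Your closing paragraph shows you sensed this tension, but the short-travel/genericity argument you offer does not close it.

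Two smaller issues. First, your assertion ``in a $4$-dimensional Lorentzian space any null covector can be written as a combination of three linearly independent null covectors'' is false: the span of three linearly independent null covectors is a proper $3$-plane, whose intersection with the null cone is a conic, not the entire cone, so the existence of a nonzero null $\zeta$ in the span with all $c_j\neq 0$ still needs to be shown (the paper does this by the explicit choice of $\vartheta_j$). Second, for the causal independence $x_j\notin J^+(x_k)$, ``shifting each $x_j$ in the time direction'' is vaguer than what is needed; the paper instead arranges $\mathbf{t}(x_j)=\mathbf{t}(x_1)$ for all $j$ in the proof of Lemma \ref{lm_perturb_zeta_three}, which together with the linear independence of the directions at $q$ yields causal independence cleanly.
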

\begin{proof}
By \cite[Lemma 3.5]{Kurylev2018}, first we pick $\zeta$ and ${\zhone}$ in $L^{*,+}_q M$ such that there exist $(x_1, \xi_1) \in L^{+}V$ and $(\hat{y}, \hat{\eta}) \in L^{*,+}V$ with
\[
(q, \zhone) = (\gamma_{x_1, \xi_1}({s_q}), (\dot{\gamma}_{x_1, \xi_1}(s_q))^\flat), \quad
(\hat{y}, \hat{\eta}) = (\gamma_{q, \zeta^\sharp}(s_{\hat{y}}), (\dot{\gamma}_{q, \zeta^\sharp}(s_{\hat{y}}))^\flat),
\]
for some $ 0 <s_q< \rho(x_1, \xi_1)$ and
$ 0 <s_{\hat{y}}< \rho(q, \zeta)$.
Note that one can find such $(\hat{y}, \hat{\eta})$ by considering the opposite direction, following the proof of \cite[Lemma 3.5]{Kurylev2018}.
Next by Lemma \ref{lm_perturb_zeta1}, one can find two more covectors $\zhj$ at $q$, with $(x_j, \xi_j)$ for $j = 2,3$, such that $(x_j, \xi_j), j=1,2,3$ are linearly independent and casually independent.
Then to prove the rest of (a),
we would like to choose such $\zhj, j=2,3$ satisfying Lemma \ref{lm_perturb_zeta_three} in the following.

To have (b), we can always replace $(x_j, \xi_j)$ by $(\gamma_{x_j, \xi_j}(s_j),\dot{\gamma}_{x_j, \xi_j}(s_j))$ for some $s_j > 0$ if necessary.
Then by \cite[Lemma 2.4]{Hintz2017}, the null geodesic $\gamma_{x_j, \xi_j}(s)$ always hit $\partial M$ transversally before it passes $q$, since the boundary is assumed to be null-convex.

To have (c),  recall we have found
$\zeta \in L_q^{*,+}M$ with
$(\hat{y}, \hat{\eta}) = (\gamma_{q, \zeta^\sharp}(s_{\hat{y}}), (\dot{\gamma}_{q, \zeta^\sharp}(s_{\hat{y}}))^\flat) \in L^{*, +}V$ for some
$ 0 <s_{\hat{y}}< \rho(q, \zeta)$.
We define
\[
s_y = \inf \{ s> 0: \gamma_{q, \zeta}(s) \in \partial M \}, \quad (y, \eta) = (\gamma_{q, \zeta}(s_y), (\dot{\gamma}_{q, \zeta}(s_y)^\flat).
\]
Note that $s_y < s_{\hat{y}} < \rho(q, \zeta)$.
In addition, the null geodesic $\gamma_{q, \zeta}(s)$ hit $\partial M$ transversally at $y$.
Thus, $(y, \eta) \in \LcMpo$ and (c) is true for $(y, \eta)$.

\end{proof}

\begin{lm}\label{lm_perturb_zeta_three}
    Let $q \in \mathbb{W}$ and $\zeta, \zhone \in L_q^{*,+} M$ be fixed.
    Let $(x_1, \xi_1) \in L^{+} V$ satisfying
    \[
    (q, \zhone) = (\gamma_{x_1, \xi_1}(s_1), (\dot{\gamma}_{x_1, \xi_1}(s_1))^\flat)
    \] with $0 < s_1 < \rho(x_1, \xi_1)$.
    For sufficiently small $\varsigma >0$,
    we can find
    $\zhtwo, \zhthree \in N^+(\zhone, \varsigma)$, such that there exist lightlike vectors
    $(x_j, \xi_j) \in L^{+} V$ for $j = 2,3$, with
    \[
    (q, \zhj) = (\gamma_{x_j, \xi_j}(s_j), (\dot{\gamma}_{x_j, \xi_j}(s_j))^\flat), \quad
    0 < s_j < \rho(x_j, \xi_j),
    \]
    and satisfying that  $\zhone, \zhtwo, \zhthree$ are linearly independent with
    $\zeta$ in their span.
    In addition, we can find such $x_1, x_2, x_3$ that are casually independent.
\end{lm}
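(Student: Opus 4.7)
The plan is to first choose the perturbed covectors $\zhtwo,\zhthree$ in the future lightcone $L_q^{*,+}M$ near $\zhone$ so that $\{\zhone,\zhtwo,\zhthree\}$ is linearly independent and $\zeta$ lies in their span; then Lemma \ref{lm_perturb_zeta1} lifts them to vectors $(x_j,\xi_j) \in L^+V$ with $0 < s_j < \rho(x_j,\xi_j)$, and a perturbation argument secures pairwise causal independence. The problem thus reduces essentially to a linear-algebra choice on the $3$-dimensional light cone at $q$.

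If $\zeta$ is a positive multiple of $\zhone$, the span condition is automatic, and it suffices to pick any two $\zhtwo,\zhthree \in N^+(\zhone,\varsigma)$ with $\{\zhone,\zhtwo,\zhthree\}$ linearly independent, which is a generic condition on the $3$-dimensional cone. Otherwise, $\zhone$ and $\zeta$ span a $2$-plane, and I fix a $3$-dimensional hyperplane $H \subset T_q^*M$ containing both. Since non-parallel future-pointing null covectors in Lorentzian signature always satisfy $g^{-1}(\zhone,\zeta) \neq 0$, the linear functional $w \mapsto g^{-1}(\zhone,w)$ is nonzero on $H$ (it is nonzero at $\zeta \in H$), so the differential of $v \mapsto g^{-1}(v,v)$ at $\zhone$ does not vanish when restricted to $H$. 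By the implicit function theorem, $S := H \cap L_q^{*,+}M$ is a smooth $2$-dimensional submanifold near $\zhone$. I then pick $\zhtwo,\zhthree \in S$ close enough to $\zhone$ to lie in $N^+(\zhone,\varsigma)$ and generic enough that $\{\zhone,\zhtwo,\zhthree\}$ is linearly independent --- the dependence locus being the lower-dimensional subsets $\mathbb{R}\zhone \cap S$ and $\operatorname{span}(\zhone,\zhtwo)\cap S$. The triple then spans the $3$-dimensional $H$, so $\zeta \in H$ lies in its span.

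With $\varsigma$ shrunk as needed so that Lemma \ref{lm_perturb_zeta1} applies to each of $\zhtwo,\zhthree$, I obtain $(x_2,\xi_2),(x_3,\xi_3) \in L^+V$ with $(q,\zhj) = (\gamma_{x_j,\xi_j}(s_j),(\dot{\gamma}_{x_j,\xi_j}(s_j))^\flat)$ for some $0 < s_j < \rho(x_j,\xi_j)$, and with each $(x_j,\xi_j)$ causally independent from $(x_1,\xi_1)$. For pairwise independence within the triple, I use that $x_2, x_3$ lie on distinct past null geodesics from the common point $q$ and close to $x_1 \in V$; the past light cone of $q$ being achronal, $x_2$ and $x_3$ are not timelike-related, and for sufficiently small $\varsigma$ a generic choice of $(\zhtwo,\zhthree) \in S \times S$ rules out their lying on a common null geodesic, giving $x_j \notin J^+(x_k)$ for $j \neq k \in \{2,3\}$.

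The main obstacle I anticipate is precisely this last point: Lemma \ref{lm_perturb_zeta1} provides causal independence only relative to the fixed base vector $(x_1,\xi_1)$, and getting mutual independence of $(x_2,\xi_2)$ and $(x_3,\xi_3)$ requires an additional step combining the continuous dependence $\zhj \mapsto x_j$ of the trace-back construction with a genericity or transversality argument on the $2$-dimensional parameter space $S$, likely after further shrinking $\varsigma$.
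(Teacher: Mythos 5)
Your proposal is correct but takes a genuinely different route from the paper. For the choice of $\zhtwo,\zhthree$, you work abstractly: fix a $3$-plane $H\subset T_q^*M$ through $\zhone$ and $\zeta$, apply the implicit function theorem to $H\cap L^{*,+}_qM$ (using $g^{-1}(\zhone,\zeta)\neq 0$ for non-parallel null covectors), and pick a generic pair in the resulting $2$-manifold. The paper instead writes everything in Minkowski-type coordinates at $q$, rotates so that $\theta=-\dot\gamma_{x_1,\xi_1}(s_1)$ and $w=\zeta^\sharp$ lie in a common coordinate $3$-plane, and produces $\vartheta_2,\vartheta_3$ by an explicit one-parameter deformation, which makes linear independence and the span condition visible by inspection. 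The more substantive difference is how mutual causal independence of $x_2,x_3$ is obtained. You argue via achronality of the pre-cut-locus past light cone and the reverse triangle inequality, then flag a residual ``genericity'' worry about a common null geodesic; but note that this worry is already closed by the linear independence you imposed: if $x_2< x_3$ were causal-but-not-timelike, $x_2,x_3,q$ would have to lie on a single null geodesic (any broken null path would produce a timelike curve to $q$, contradicting $\tau(x_j,q)=0$), which would force $\zhtwo\parallel\zhthree$. The paper sidesteps all of this: the trace-back construction in Lemma \ref{lm_perturb_zeta1} is carried out so that $\textbf{t}(x_1)=\textbf{t}(x_2)=\textbf{t}(x_3)$ for a Cauchy temporal function $\textbf{t}$, and since $\textbf{t}$ is strictly increasing along future-directed causal curves, causal independence is immediate. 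Your route is more self-contained and geometric; the paper's is shorter because it re-uses the same-time-slice structure already built into the previous lemma.
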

\begin{proof}
    Let $\theta = -\dot{\gamma}_{x_1, \xi_1}(s_1)$ be the past pointing lightlike vector at $q$.
    By the same idea of Lemma \ref{lm_perturb_zeta1},
    there exists $\varsigma >0$ such that for each $\vartheta \in N^-(\theta, \varsigma)$, 
    one can find
    a vector $(x, \xi) \in L^{+} V$
    satisfying $(x, \xi) = (\gamma_{q, \vartheta}(s_x), -\dot{\gamma}_{q, \vartheta}(s_x))$ with $ 0 < s_x < \rho(q, \vartheta)$.
    In particular, the proof there shows that $\textbf{t}(x) = \textbf{t}(x_1)$, where $\textbf{t}$ is the time function.

    In the following, we would like to choose two more lightlike vectors $\vartheta_j \in N^-(\theta, \varsigma)$ that are linearly independent and additionally $w = \zeta^\sharp \in L^{+}_q M$ is in their span.
    For this purpose, first at $q$, we consider local coordinates \[
    x = ( x^0, x^1, x^2, x^3)
    \]
    such that $g$ coincides with the Mankowski metric.
    One can rotate the coordinate system in the spatial variables such that $\theta$ and $w$ lie in the same plane $x^3 = 0$.
    This indicates without loss of generality, we can assume
    \[
    w = (1, \sqrt{1-r_0^2}, -r_0, 0), \quad \theta = (-1, 1, 0, 0),
    \]
    where $r_0 \in [-1, 1]$ is a parameter.
    We set $\vartheta_1 = \theta$ and choose
    \[
    \vartheta_2 = (-1, \sqrt{1-s^2}, s, 0), \quad \vartheta_3 = (-1, \sqrt{1-s^2}, -s, 0),
    \]
    with a  sufficiently small parameter $s$.
    This is the construction proposed in  \cite{Hintz2020}.
    One can see that $\vartheta_j$ are linearly independent and $w$ is indeed in the span of $\vartheta_j$, $j = 1, 2, 3$.
    From the analysis above, for each $\vartheta_j$, we can find a vector $(x_j, \xi_j) \in L^+V$
    before the first cut point
    with $\textbf{t}(x_j) = \textbf{t}(x_1)$ for $j=2,3$.
    Thus, one has $x_1, x_2, x_3$ are causally independent.
    Then let $s_j$ be the time such that $q = \gamma_{x_j, \xi_j}(s_j)$.
    We must have $ 0 < s_j < \rho(x_j, \xi_j)$, since $x_j$ is before the first cut point of $q$ along $\gamma_{q, \vartheta_j}$.
    This proves the lemma.
\end{proof}

Next, we claim that one can construct a sequence of lightlike vectors in $V$ and a lightlike covector $\zeta$ at $q$,  which satisfy Claim \ref{cl_const_three1}.
More explicitly,
for any fixed $q \in \mathbb{W}$
and sufficiently small $s_0 > 0$,
one can find $(x_1, \xi_1)\in  L^+V$ and
sequences of lightlike vectors
\[
(x_{j,k}, \xi_{j,k}) \rightarrow (x_1, \xi_1)
\quad \text{as } k \rightarrow +\infty,
\]
for $j =2, 3$ with
\[
\zeta \in \Lambda_{123}\setminus (\Lambda^{(1)} \cup \Lambda^{(2)}),
\quad  (y, \eta) \in \LcMpo,
\]
such that for each fixed $k$, the conditions (a) - (c) in Claim \ref{cl_const_three1} hold.
Indeed, by the proof of Claim \ref{cl_const_three1}, one can find such $(y, \eta)$ satisfying the condition (c).
To satisfy (a) and (b),
by Lemma \ref{lm_perturb_zeta_three},
we choose a sequence of $\varsigma_k$ that converges to zero.
For each $\varsigma_k$ that is sufficiently small,
we can find different
$\zhtwo, \zhthree \in N^+(\zhone, \varsigma_k)$, such that there are lightlike vectors
$(x_{j,k}, \xi_{j,k}) \in L^{+} V$ for $j = 2,3$
satisfying (a) and (b).
With $\varsigma_k$ goes to zero, we have $(x_{j,k}, \xi_{j,k})$ to converge to $(x_1, \xi_1)$, when $k$ goes to $+\infty$.

For each fixed $k$, we can recover the quantity
$M_3(q, \zone, \ztwok, \zthreek)$, see (\ref{eq_M3}).
Since $(x_{j,k}, \xi_{j,k})$ converges to $(x_1, \xi_1)$ as $k \rightarrow +\infty$,
the null geodesics $\gamma_{x_{j,k}, \xi_{j,k}}(s)$ with $j = 2,3$ converge to $\gamma_{x_1, \xi_1}(s)$.
In this case, from a sequence of (\ref{eq_M3}), we expect to recover
\begin{align}\label{eq_m3}
    m_3 (q, \zeta, \zeta^1)
    = -(2 \beta^2_2  + \beta_3)
    {\sigmp}(\Q_g)(y, \eta, q, \zeta)
    ({\sigmp}(\Q_g)(q, \zeta, x_1, \xi_1^\sharp)\psymb(v_1)(x_1, \xi_1^\sharp))^3.
\end{align}

\subsection{Construction for the fourth-order linearization}
In this subsection, we claim that
for any fixed point $q \in \mathbb{W}$,
one can find a set of lightlike vectors $\{(x_j, \xi_j)\}_{j=1}^4$ in $V$ and a lightlike covector $\zeta$ at $q$, which are corresponding to null geodesics intersecting regularly at $q$.
Similarly, the lightlike vectors $\{(x_j, \xi_j)\}_{j=1}^4$ are corresponding to four incoming null geodesics
and the lightlike covector $\zeta$ at $q$ is corresponding to the new singularities produced by the interaction of four distorted plane waves.
When $s_0 >0$ is small enough,
the covector $\zeta$ can be chosen away from the singularities caused by the interaction of at most three waves.
Then $(q, \zeta)$ is corresponding to an outgoing null geodesic and we would like to find a lightlike vector $(y, \eta)$ in $V$ along this null geodesic before its first cut point.
The same claim and proof is used in \cite{UZ_acoustic}.

We emphasize that even though we use the same notations as before,
the choice of $(x_j, \xi_j)$ and $\zhj$ for $j = 2,3,4$ should be totally different from those in Section \ref{subsec_three}.
\begin{claim}\label{cl_const_four1}
    Suppose $q \in \mathbb{W}$ and $s_0>0$ is sufficiently small.
    Then one can find \[
    {\rv \{(x_j, \xi_j)\}_{j=1}^4} \subset L^+V, \quad
    \zeta \in \Lambda_{q}\setminus(\Lambda^{(1)} \cup \Lambda^{(2)} \cup \Lambda^{(3)}),
    \quad (y, \eta) \in \LcMpo,
    \]
    such that
    \begin{enumerate}[(a)]
        \item $(x_j, \xi_j), j = 1,2,3,4$ are causally independent as in  (\ref{assump_xj}) and the null geodesics starting from them intersect regularly at $q$ (see Definition \ref{def_inter}),
        and thus
        $\zeta$ is in the span of $(\dot{\gamma}_{x_j, \xi_j}(s))^\flat$ at $q$; 
        \item each $\gamma_{x_j, \xi_j}(\mathbb{R}_+)$ hits $\partial M$ exactly once and transversally before it passes $q$;
        \item $(y, \eta) \in  \LcMpo$ lies in the  bicharacteristic from $(q, \zeta)$ and additionally there are no cut points along $\gamma_{q, \zeta^\sharp}$ from $q$ to $y$.
    \end{enumerate}
\end{claim}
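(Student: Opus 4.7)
The approach is to follow the same blueprint as Claim \ref{cl_const_three1}, extended to four incoming null geodesics. First I would apply \cite[Lemma 3.5]{Kurylev2018} to pick $\zeta, \hat{\zeta}^1 \in L^{*,+}_q M$ and an associated $(x_1, \xi_1) \in L^+ V$ with $q = \gamma_{x_1,\xi_1}(s_1)$ and $\hat{\zeta}^1 = (\dot{\gamma}_{x_1,\xi_1}(s_1))^\flat$ for some $0 < s_1 < \rho(x_1,\xi_1)$, together with $(\hat{y}, \hat{\eta}) \in L^{*,+} V$ on the forward bicharacteristic from $(q, \zeta)$, strictly before its first cut point. Condition (c) is then obtained exactly as in Claim \ref{cl_const_three1}: define $(y, \eta) \in \LcMpo$ as the first (transverse) intersection of $\gamma_{q, \zeta^\sharp}$ with $\partial M$; null-convexity of $\partial M$ combined with \cite[Lemma 2.4]{Hintz2017} ensures transversality, and this intersection occurs at some parameter $s_y < s_{\hat{y}} < \rho(q, \zeta)$, so that no cut points lie between $q$ and $y$.

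The main new ingredient is a four-vector analog of Lemma \ref{lm_perturb_zeta_three}. In normal coordinates at $q$ in which $g$ is the Minkowski metric, rotate spatial variables so that $\vartheta_1 := -\dot{\gamma}_{x_1,\xi_1}(s_1) = (-1, 1, 0, 0)$. For a small parameter $s > 0$ I would take
\[
\vartheta_2 = (-1, \sqrt{1-s^2}, s, 0),\quad
\vartheta_3 = (-1, \sqrt{1-s^2}, -s, 0),\quad
\vartheta_4 = (-1, \sqrt{1-s^2}, 0, s),
\]
each past-pointing lightlike and lying in $N^-(\vartheta_1, \varsigma)$ for suitable $\varsigma > 0$. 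A direct determinant computation gives $\det(\vartheta_1,\vartheta_2,\vartheta_3,\vartheta_4) = 2s^2(\sqrt{1-s^2}-1) \neq 0$, so $\vartheta_1,\ldots,\vartheta_4$ form a basis of $T_q M$. Applying (the proof of) Lemma \ref{lm_perturb_zeta1} to each $\vartheta_j$ then yields $(x_j, \xi_j) \in L^+ V$ with $-\dot{\gamma}_{x_j,\xi_j}(s_j) = \vartheta_j$ at $q$ for some $0 < s_j < \rho(x_j, \xi_j)$, all sharing the common time-function value $\textbf{t}(x_j) = \textbf{t}(x_1)$, which forces the causal independence in (a). Condition (b) follows by replacing $(x_j, \xi_j)$ with a point slightly further along its geodesic if necessary and once more invoking null-convexity of $\partial M$.

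It remains to verify $\zeta \notin \Lambda^{(1)} \cup \Lambda^{(2)} \cup \Lambda^{(3)}$ at the fiber $T^*_q M$. Setting $\hat{\zeta}^j = (\dot{\gamma}_{x_j,\xi_j}(s_j))^\flat$, as $s_0 \to 0$ these three sets, restricted to $T^*_q M$, collapse respectively to the union of the four lines $\mathbb{R}\hat{\zeta}^j$, the six planes $\mathrm{span}(\hat{\zeta}^i, \hat{\zeta}^j)$, and the four hyperplanes $\mathrm{span}(\hat{\zeta}^i, \hat{\zeta}^j, \hat{\zeta}^k)$ in the basis $\{\hat{\zeta}^j\}_{j=1}^4$, so they are all contained in the union of the coordinate hyperplanes of this basis. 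Any $\zeta$ whose four basis coefficients are all nonzero therefore satisfies the required exclusion. The main obstacle, and the step demanding most care, is arranging this genericity of $\zeta$ simultaneously with the earlier constraints from \cite[Lemma 3.5]{Kurylev2018} and with $(y, \eta) \in \LcMpo$ lying in $V$ before its first cut point; this compatibility is secured by shrinking $s_0$ and $\varsigma$, exploiting the openness of the admissible $\zeta$'s and the codimension-one nature of the exclusion set.
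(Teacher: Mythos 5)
Your proposal is correct and takes essentially the same route as the paper's (much terser) proof: pick $\zeta$, $\zhone$, $(x_1,\xi_1)$ and $(\hat y,\hat\eta)$ via \cite[Lemma 3.5]{Kurylev2018}, obtain three more causally independent directions by perturbing $\zhone$ as in Lemma \ref{lm_perturb_zeta1}, and settle (b) and (c) exactly as in Claim \ref{cl_const_three1}. The only point worth flagging is that your fixed choice of $\vartheta_2,\vartheta_3,\vartheta_4$ is not adapted to $\zeta$ (unlike the paper's later explicit choice of $\zhtwo,\zhthree,\zhfour$, which depends on the angle $\varphi$ describing $\zeta$ and makes all coefficients $\alpha_j$ nonzero by construction), so the closing perturbation argument you sketch is genuinely needed to ensure $\zeta$ avoids $\Lambda^{(1)}\cup\Lambda^{(2)}\cup\Lambda^{(3)}$.
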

\begin{proof}
First, we pick $\zeta$ and ${\zhone}$ in $L^{*,+}_q M$ as in the proof of Claim \ref{cl_const_three1}.
Note that there exist $(x_1, \xi_1) \in L^{+}V$ and $(\hat{y}, \hat{\eta}) \in L^{*,+}V$ with
\[
(q, \zhone) = (\gamma_{x_1, \xi_1}({s_q}), (\dot{\gamma}_{x_1, \xi_1}(s_q))^\flat), \quad
(\hat{y}, \hat{\eta}) = (\gamma_{q, \zeta^\sharp}(\hat{s}), (\dot{\gamma}_{q, \zeta^\sharp}(\hat{s}))^\flat),
\]
for some $ 0 <s_q< \rho(x_1, \xi_1)$ and $0 <\hat{s}< \rho(q, \zeta)$.
Next, by Lemma \ref{lm_perturb_zeta1}, one can find three more linearly independent covectors $\zhj$ with $(x_j, \xi_j)$ for $j = 2,3,4$ at $q$ such that $(x_j, \xi_j)_{j=1}^4$ satisfy the condition (a).
Then (b) and (c) can be satisfied following the same idea as before.
\end{proof}

Moreover, according to Lemma \ref{lm_perturb_zeta1}, with $\zeta, \zhone$ given, we have freedom to choose $(x_j, \xi_j), j = 2,3,4$, as long as they are from sufficiently small perturbations of $\zhone$.
The proof of \cite[Lemma 5]{UZ_acoustic} shows that
for fixed $\zeta, \zhone \in L_q^{*,+} M$,
there exist a set of lightlike covectors $\zhtwo, \zhthree, \zhfour$ near $\zhone$, depending a small parameter $\theta$, such that
$
\zeta =\sum_{j=1}^4 \zj =  \sum_{j=1}^4 \alpha_{j} \zhj,
$ 
for some constant $\alpha_{j}$.
More explicitly,
one can choose local coordinates $x = (x^0,x^1, x^2, x^3)$ at $q$ such that $g$ coincides with the {\rv Minkowski} metric.
By rotating the coordinate system in the spatial variables, without loss of generality, we can assume
\[
\zeta = (-1, 0, \cos \varphi, \sin \varphi), \quad {\zhone} = (-1, 1, 0, 0),
\]
where $\varphi \in [0, 2\pi)$. For $\theta \neq 0$ sufficiently small, we choose
\begin{align*}
    \zhtwo &= (-1, \cos \theta, \sin \theta \sin \varphi, -\sin \theta \cos \varphi), \\
    \zhthree &= (-1, \cos \theta, -\sin \theta \sin \varphi, \sin \theta \cos \varphi), \\
    \zhfour &=(-1, \cos \theta, \sin \theta \cos \varphi, \sin \theta \sin \varphi).
\end{align*}
The coefficients $\alpha_{j}$ can be computed and we have $\zj = \alpha_{j} \zhj$.
Then the analysis in \cite[Lemma 5]{UZ_acoustic} shows that
\begin{align*}
     C(\zone, \ztwo, \zthree, \zfour)
    & \equiv \sum_{(i,j,k,l) \in \Sigma(4)} (4\frac{(\ziz + \zjz + \zkz)^2}{|\zi + \zj + \zk|^2_{g^*}} + \frac{(\ziz+\zlz)^2}{| \zi + \zl|^2_{g^*}})
    \frac{(\zjz + \zkz)^2}{|\zj + \zk|^2_{g^*}},\\
    & = -\frac{2}{s^3} +\frac{14}{s^2} + \frac{10}{s} + \mathcal{O}(1)\\
   D(\zone, \ztwo, \zthree, \zfour)
    & \equiv  \sum_{(i,j,k,l) \in \Sigma(4)} (3 \frac{(\zkz+\zlz)^2}{| \zk + \zl|^2_{g^*}} + 2\frac{(\ziz + \zjz + \zkz)^2}{|\zi + \zj + \zk|^2_{g^*}})\\
    & = \frac{3}{2s^3} - \frac{21}{2s^2} - \frac{9}{4s} + \mathcal{O}(1),
\end{align*}
which implies that
    \begin{align}\label{eq_Cs}
        \mathcal{C}(\zone, \ztwo, \zthree, \zfour)
         &= C(\zone, \ztwo, \zthree, \zfour) \beta_2^3 + D(\zone, \ztwo, \zthree, \zfour)\beta_2\beta_3 + \beta_4 \nonumber\\
         &=-\frac{1}{2s^3}( 4\beta_2^3
        - 3\beta_2\beta_3) + \frac{7}{2s^2}(4\beta_2^3
        - 3\beta_2\beta_3) + \frac{1}{4s}(40\beta_2^3
        - 9\beta_2\beta_3) + \mathcal{O}(1),
    \end{align}
when $s = \sintw$ is sufficiently small.

For each fixed $\theta$, we can expect to recover the quantity
$M_4(q, \zone, \ztwo, \zthree, \zfour)$, see (\ref{eq_M3}).
Now to distinguish these lightlike covectors (or lightlike vectors) from those in Section \ref{subsec_three},
we use the notation $\zjt$ and $\tilde{\xi}_j$ for $j = 2,3,4$ instead.
We compute
\begin{align*}
    M_4(q, {\zone}, {\ztwot}, {\zthreet}, {\zfourt})
    & = \frac{1}{2s^3}(4 \beta_2^3 + 3 \beta_2 \beta_3 ){\sigmp}(\Q_g)(y, \eta, q, \zeta)\prod_{j=1}^4{\sigmp}(\Q_g)(q, \zeta, \tilde{x}_j, \tilde{\xi}^\sharp_j)
     + O(\frac{1}{s^2}).
 \end{align*}
When $s$ goes to zero, the null geodesics $\gamma_{\tilde{x}_j, \tilde{\xi}_j}$ converge to $\gamma_{x_1, \xi_1}$.
By analyzing the asymptotic behavior of $M_4$ when $s \rightarrow 0$,
we can expect to recover the quantity
\begin{align}\label{eq_m4}
    m_4(q, \zeta, \zeta^1)
    = (-4 \beta^2_3  + 3\beta_2\beta_3)
    {\sigmp}(\Q_g)(y, \eta, q, \zeta)
    ({\sigmp}(\Q_g)(q, \zeta, x_1, \xi^\sharp_1)\psymb(v_1)(x_1, \xi_1^\sharp))^4.
\end{align}

\subsection{The recovery of the one-form and the nonlinearity}
For $k=1,2$,
suppose $\bk \in C^\infty(M;T^*M)$
are two one-forms
and $\betak_{m+1} \in C^\infty(M)$, $m \geq 1$ are nonlinear coefficients.
Suppose $\pk$ solve the boundary value problem (\ref{eq_problem})
with the one-form $\bk$ and the nonlinear terms $\Fk$ given by
\[
\Fkpk = \sum_{m=1}^{+\infty} \betak_{m+1}(x) \partial_t^2
{\rv ((\pk)^{m+1})},
\quad k = 1, 2,
\]
and satisfy the assumption in Theorem \ref{thm}. 
Suppose the two DN maps satisfy
\[
\lambdaFone(f) = \lambdaFtwo(f),
\]
for small boundary data $f$ supported in $\pMo$.

Now let $q \in \mathbb{W}$ be fixed.
Firstly,
we choose a lightlike vectors $(x_1, \xi_1) \in V$, sequences of lightlike vectors ${(x_{j,l}, \xi_{j,l})} \in V$ for $j = 2,3$, a lightlike covector $\zeta \in L^*_qM$, and $(y, \eta) \in \LcMpo$,
such that
\[
(x_{j,l}, \xi_{j,l}) \rightarrow (x_1, \xi_1)
\quad \text{as } l \rightarrow +\infty,
\]
and for each fixed $l$, the lightlike vectors $(x_1, \xi_1), (x_{j,l}, \xi_{j,l})$ and covectors  $\zeta, (y, \eta)$
satisfy   Claim \ref{cl_const_three1}.

Secondly, we construct the boundary source $f$ following the ideas in Section \ref{sub_distorted}, for each fixed $l$.
For convenience, we denote $(x_{j,l}, \xi_{j,l})$ by $(x_{j}, \xi_{j})$ in this part,
with $j = 2,3$.
Let $\Qk$ be the parametrices to the linear problems with different one-forms $\bk$ and potentials $\hk$, for $k =1,2$.
Recall we extend $\bk, \hk$ smoothly to $\bkt, \hkt$ in $\tM$, see Section \ref{subsec_extension}.
Moreover, there exists a smooth function $\tuu$ on $\tM$ with $\tuu|_{\pM} = 1$ such that for any $x \in V$ we have
\begin{align*}
\btwot &= \bone - 2 \tuu^{-1}\diff \tuu,\\
\htwot &= \hone - \lge \bone, \tuu^{-1} \diff \tuu \rge - \tuu^{-1} \sq \tuu.
\end{align*}
Now we choose point sources $\tilde{f}_j^{(2)} = \tuu \tilde{f}_j^{(2)}$, which are singular near $(x_j, \xi_j)$.
Then there are distorted plane waves $u^{(k)}_j \in \Ical^{\mu}(\Sigma(x_j, \xi_j, s_0))$ satisfying
\[
(\square_g + \lge \bk(x), \nbg \rge + \hk(x)) u_j^{(k)} = \tilde{f}_j^{(k)}.
\]
Note that we choose negative $\mu$ such that $u^{(k)}_j$ is at least continuous, since we would like to choose boundary sources in $C^6(\pMo)$.
Then by continuity we claim that
\[
u^{(2)}_j|_{O_j} =  \uu u^{(1)}_j|_{\partial M}= u^{(1)}_j|_{O_j}, 
\]
where $O_j \subset \pMo$ is a small open neighborhood of $\gamma_{x_j, \xi_j}(t_j^o)$ with $t_j^o$ defined in (\ref{def_bpep}).
Then following the same ideas of scattering control as in \cite[Proposition 3.2]{Hintz2021},
we can choose a boundary source $f_j$ and set $v^{(k)}_j$ be the solution to the boundary value problem (\ref{eq_v1}) with $f_j$ and $\bk, \hk$,
such that
\[
v^{(1)}  = u^{(1)}_j \mod C^\infty(M), \quad v^{(2)}  = u^{(2)}_j \mod C^\infty(M).
\]
With $v^{(1)} |_{\pM} = v^{(2)} |_{\pM} = f_j$, we have
\begin{align}\label{eq_psv1}
\psymb(v^{(1)})(x^o_j, (\xi^o_j)^\sharp) = \psymb(v^{(2)})(x^o_j, (\xi^o_j)^\sharp),
\end{align}
where we write $(x^o_j, (\xi^o_j)^\sharp) = (\gamma_{x_j, \xi_j}(t^o_j), (\dot{\gamma}_{x_j, \xi_j}(t^o_j))^\sharp)$
as the point where $\gamma_{x_j, \xi_j}$ enters $M$ for the first time.

%
%

Then by Proposition \ref{pp_uthree}, (\ref{eq_M3}), and (\ref{eq_m3}), we conclude that
\begin{align*}
\sigmp(\depthree \lambdaFone \zepthree)(\yb, \etab) &= \sigmp(\depthree \lambdaFtwo \zepthree)(\yb, \etab)\\
\Rightarrow \quad
m_3^{(1)} (q, \zeta, \zeta^1) &= m_3^{(2)} (q, \zeta, \zeta^1).
\end{align*}
%
More explicitly, combining (\ref{eq_v1}) we have
\begin{align}\label{eq_u3}
& (2(\beta_2^{(1)})^2  + \beta_3^{(1)})
{\sigmp}(\Qone)(y, \eta, q, \zeta)
({\sigmp}(\Qone)(q, \zeta, x^o_1, (\xi^o_1)^\sharp))^3\\
=&
 (2(\beta_2^{(2)})^2  + \beta_3^{(2)})
{\sigmp}(\Qtwo)(y, \eta, q, \zeta)
({\sigmp}(\Qtwo)(q, \zeta, x^o_1, (\xi^o_1)^\sharp))^3 . \nonumber
\end{align}

Thirdly, we choose sequences of lightlike vectors ${(\tx_{j,l}, \txi_{j,l})} \in V$ for $j = 2,3,4$
such that
\[
(\tx_{j,l}, \txi_{j,l}) \rightarrow (x_1, \xi_1)
\quad \text{as } l \rightarrow +\infty,
\]
and for each fixed $l$, the lightlike vectors $(x_1, \xi_1), (\tx_{j,l}, \txi_{j,l})$ and covectors  $\zeta, (y, \eta)$
satisfy  Claim \ref{cl_const_four1}, .

Then by Proposition \ref{pp_ufour}, (\ref{eq_M4}), and (\ref{eq_m4}), we conclude that
\begin{align*}
    \sigmp(\depfour \lambdaFone \zepfour)(\yb, \etab) &= \sigmp(\depfour \lambdaFtwo \zepfour)(\yb, \etab)\\
    \Rightarrow \quad
    m_4^{(1)} (q, \zeta, \zeta^1) &= m_4^{(2)} (q, \zeta, \zeta^1),
\end{align*}
%
%
which implies
\begin{align}\label{eq_u4}
    & (4(\beta_2^{(1)})^3  - 3 \beta_2^{(1)}\beta_3^{(1)})
    {\sigmp}(\Qone)(y, \eta, q, \zeta)
    ({\sigmp}(\Qone)(q, \zeta, x^o_1, (\xi^o_1)^\sharp))^4\\
    =&
    (4(\beta_2^{(2)})^3  - 3 \beta_2^{(2)}\beta_3^{(2)})
    {\sigmp}(\Qtwo)(y, \eta, q, \zeta)
    ({\sigmp}(\Qtwo)(q, \zeta, x^o_1, (\xi^o_1)^\sharp))^4 . \nonumber
\end{align}

Now we combine (\ref{eq_u3}) and (\ref{eq_u4}) to have
\begin{align}\label{eq_Q1Q2}
{\sigmp}(\Qone_g)(q, \zeta, x^o_j, (\xi^o_j)^\sharp)
 = \uu(q) {\sigmp}(\Qtwo_g)(q, \zeta, x^o_j, (\xi^o_j)^\sharp),
\end{align}
where we define
\begin{align*}
    \uu(x) \equiv \frac{(2(\beta_2^{(1)})^2  + \beta_3^{(1)})}{ (2(\beta_2^{(2)})^2  + \beta_3^{(2)})} \frac{(4(\beta_2^{(2)})^3  - 3 \beta_2^{(2)}\beta_3^{(2)})}{(4(\beta_2^{(1)})^3  - 3 \beta_2^{(1)}\beta_3^{(1)})},
\end{align*}
if we assume $2(\betak_2)^2  + \betak_3 \neq 0$ and
$4(\betak_2)^3  - 3 \betak_2\betak_3 \neq 0$ for $x \in \mathbb{W}$ and $k = 1,2$.
This implies that {$\uu(x) \neq 0$} for any $x \in \mathbb{W}$.

\begin{remark}
Recall in Theorem \ref{thm} we assume the quantity $2 (\betak_2)^2 + \betak_3$ does not vanish on any open set of $\mathbb{W}$.
If for fixed $x \in \mathbb{W}$, we have $4(\betak_2)^3  - 3 \betak_2\betak_3 = 0$, then in (\ref{eq_Cs}), the terms w.r.t. $1/s^3$ and $1/s^2$ will vanish.
The leading order terms are given by $1/s$ and instead of (\ref{eq_u4}), we have
\begin{align*}
    & (40(\beta_2^{(1)})^3  - 9 \beta_2^{(1)}\beta_3^{(1)})
    {\sigmp}(\Qone)(y, \eta, q, \zeta)
    ({\sigmp}(\Qone)(q, \zeta, x^o_1, (\xi^o_1)^\sharp))^4\\
    =&
    (40(\beta_2^{(2)})^3  - 9 \beta_2^{(2)}\beta_3^{(2)})
    {\sigmp}(\Qtwo)(y, \eta, q, \zeta)
    ({\sigmp}(\Qtwo)(q, \zeta, x^o_1, (\xi^o_1)^\sharp))^4. \nonumber
\end{align*}
In this case, we define
\begin{align*}
    \uu(x) \equiv \frac{(2(\beta_2^{(1)})^2  + \beta_3^{(1)})}{ (2(\beta_2^{(2)})^2  + \beta_3^{(2)})} \frac{(40(\beta_2^{(2)})^3  -9 \beta_2^{(2)}\beta_3^{(2)})}{(40(\beta_2^{(1)})^3  - 9 \beta_2^{(1)}\beta_3^{(1)})},
\end{align*}
where $40(\betak_2)^3 - 9 \betak_2\betak_3 \neq 0$.
\end{remark}

To analyze (\ref{eq_Q1Q2}), recall Section \ref{subsec_Q}.
Let
$\lambda$ be the null characteristic starting from $(x_1, \xi_1^\sharp)$ with $(q, \zeta) = \lambda(s) = (x_1(s), \xi_1^\sharp(s))$.
Along $\lambda(s)$, the equation (\ref{eq_Q1Q2}) can be written as
\begin{align*}
{\sigmp}(\Qone_g)(x_1(s), \xi^\sharp_1(s), x_1(s_o), \xi_1^\sharp(s_o))
=\uu(x_1(s))
{\sigmp}(\Qtwo_g)(x_1(s), \xi^\sharp_1(s), x_1(s_o), \xi_1^\sharp(s_o)),
\end{align*}
where we write $(x^o_1, (\xi^o_1)^\sharp) = (x_1(s_o), \xi^\sharp(s_o))$.
Differentiating w.r.t. $s$ on both sides, we have
\begin{align*}
&(\aomega \circ x_1(s) - \frac{1}{2}\langle b^{(1)}(x_1(s)), \dot{x}_1(s) \rangle)  {\sigmp}(\Qone_g)(x_1(s), \xi_1^\sharp(s), x_1(s_o), \xi_1^\sharp(s_o))\\
= & (\lge \diff \uu, \dot{x}_1(s) \rge
  + \uu(x_1(s))(\aomega \circ x_1(s) - \frac{1}{2}\langle b^{(2)}(x_1(s)), \dot{x}_1(s) \rangle))
  (\tuu(x_1){\sigmp}(\Qtwo_g)(x_1(s), \xi_1^\sharp(s), x_1(s_o), \xi_1^\sharp(s_o)))
\end{align*}
by (\ref{eq_Qsym}).
This implies that
\begin{align*}
(\aomega \circ x_1(s) - \frac{1}{2}\langle b^{(1)}(x_1(s)), \dot{x}_1(s) \rangle)  \uu(x_1(s))
 = \lge \diff \uu, \dot{x}_1(s) \rge
 + \uu(x_1(s))(\aomega \circ x_1(s) - \frac{1}{2}\langle b^{(2)}(x_1(s)), \dot{x}_1(s) \rangle)
\end{align*}
and therefore we have
\begin{align*}
\langle b^{(2)}(x_1(s)) - b^{(1)}(x_1(s)), \dot{x}_1(s) \rangle = \lge 2\uu^{-1} \diff \uu, \dot{x}_1(s) \rge.
\end{align*}
Note that $x_1(s)$ is a null geodesic on $(\tM, \tg)$.
By perturbing $\dot{x}_1(s)$, we can choose linearly independent $\dot{x}_1(s)$ at $q$.
It follows that
\begin{align*}
    b^{(2)} - b^{(1)} = 2\uu^{-1} \diff \uu,
\end{align*}
for any $q \in \mathbb{W}$.

Next, we would like to show that $\betatwo_{m+1} = \uu^m \betaone_{m+1}$.
Indeed, we plug in (\ref{eq_Q1Q2}) to (\ref{eq_M4}) and (\ref{eq_Cs}) to have
\begin{align*}
   &\uu^3 (C(\zone, \ztwo, \zthree, \zfour) (\betaone_2)^3 + D(\zone, \ztwo, \zthree, \zfour)\betaone_2\betaone_3 )\\
   = &C(\zone, \ztwo, \zthree, \zfour) (\betatwo_2)^3 + D(\zone, \ztwo, \zthree, \zfour)\betatwo_2\betatwo_3.
\end{align*}
Following the same analysis in \cite[Section 6]{UZ_acoustic}, we have
\[
\uu \betaone_2 = \betatwo_2, \quad \uu^2 \betaone_3 = \betatwo_3, \quad
\uu^3 \betaone_4 = \betatwo_4
\]
Then following the same analysis in  \cite[Section 8]{UZ_acoustic} by using higher order linearization, we can prove that
\[
\uu^{m}\betaone_{m+1} = \betatwo_{m+1}, \quad m \geq 4.
\]
%

\section{Using the nonlinear term}\label{sec_nonlinear}
In Section \ref{sec_recover_b}, our analysis shows that there exists $\uu \in C^\infty(M)$ with $\uu|_{\pM} = 1$ such that
\begin{align*}
    \btwo  = \bone + 2\uu^{-1} \diff \uu, \quad \betatwo_{m+1} = \uu^m  \betaone_{m+1}.
\end{align*}
In this part, we would like to use the nonlinear term to conclude that $\uu$ is a smooth function on $\Omega$, when the potential is known.
In other words, it does not depends on $t$, even though $\bk, \betak_{m+1}$ may depend on $t$.

We consider the nonlinear problem corresponding to $\bone, \hone, \Fone$, i.e.,
\begin{equation}\label{eq_nltwo}
    \begin{aligned}
        \sq \pone + \langle \bone(x), \nbg \pone \rangle + \hone(x) \pone - \Fone(x, \pone,\partial_t \pone,\partial^2_t \pone) &= 0, & \  & \mbox{in } \Mo\\
        \pone &= f, & \ &\mbox{on } \pMo,\\
        \pone = {\partial_t \pone} &= 0, & \  & \mbox{on } \{t=0\}.
    \end{aligned}
\end{equation}
Let $\pthree = \uu^{-1} \pone$, then $\pthree$ solves the equation
\begin{align*}
&\sq (\uu \pthree) + \langle \bone(x), \nbg (\uu \pthree) \rangle + \hone(x) \uu \pthree
-\sum_{m=1}^{+\infty} \betaone_{m+1}(x) \partial_t^2 ((\uu \pthree)^{m+1})
\\
 = &\uu(\sq \pthree + \lge \bone + 2 \uu^{-1} \diff \uu,  \nbg \pthree \rge + (\hone + \uu^{-1} \sq \uu + \lge \bone, \nbg \uu \rge)\pthree )\\
&- \sum_{m=1}^{+\infty} \betaone_{m+1}(x) (\uu^{m+1} \partial_t^2 (( \pthree)^{m+1})
 +  2\partial_t(\uu^{m+1}) \partial_t (( \pthree)^{m+1})
 + (\pthree)^{m+1} \partial_t^2 (\uu^{m+1})) \\
= & \uu(\sq \pthree + \lge \btwo, \nbg \pthree \rge + (\hone + \uu^{-1} \sq \uu + \lge \bone, \nbg \uu \rge)
- \sum_{m=1}^{+\infty} \betatwo_{m+1}(x) \partial_t^2 (( \pthree)^{m+1})
+ \uu^{-1} N_1
+ \uu^{-1} N_0)\\
= & 0,
\end{align*}
where we introduce the following notations
\begin{align*}
N_1(x, \pthree,\partial_t \pthree,\partial^2_t \pthree) &=  - \sum_{m=1}^{+\infty} 2\betaone_{m+1}(x) \partial_t(\uu^{m+1}) \partial_t (( \pthree)^{m+1}),\\
N_0(x, \pthree,\partial_t \pthree,\partial^2_t \pthree) &= - \sum_{m=1}^{+\infty} \betaone_{m+1}(x) \partial^2_t (\uu^{m+1}) ( \pthree)^{m+1}.
\end{align*}
In addition, we have
\begin{align}\label{eq_p3}
\pthree|_{\pMo} = (\rho^{-1}\pone)|_{\pMo} = \pone|_{\pMo},
\end{align}
and
\begin{align}\label{eq_pp3}
&((\partial_\nu + \frac{1}{2}\lge \bone, \nu \rge) \pone)  |_{\pMo}\\
=&( (\partial_\nu + \frac{1}{2}\lge \bone, \nu \rge) (\uu \pthree)) |_{\pMo}
= ( (\partial_\nu + \frac{1}{2}\lge \btwo, \nu \rge) \pthree) |_{\pMo}. \nonumber
\end{align}
By equation (\ref{eq_p3}), we know that $\pthree$ solves a new nonlinear problem
\begin{equation}\label{eq_nl3}
    \begin{aligned}
        \sq \pthree + \langle \btwo(x), \nbg \pthree \rangle + {\htwo(x)} \pthree - \Fthree(x, \pthree,\partial_t \pthree,\partial^2_t \pthree) &= 0, & \  & \mbox{in } \Mo\\
        \pthree &= f, & \ &\mbox{on } \pMo,\\
        \pthree = {\partial_t \pthree} &= 0, & \  & \mbox{on } \{t=0\},
    \end{aligned}
\end{equation}
for $f = \pone|_{\pMo}$,
where we write
\begin{align}
    &\Fthree(x, \pthree,\partial_t \pthree,\partial^2_t \pthree) = \Ftwo(x, \pthree,\partial_t \pthree,\partial^2_t \pthree) + \uu^{-1}N_1 + \uu^{-1}N_0. \label{eq_nlterm3}
\end{align}
We can define the corresponding DN map
\[
\LFthree(f) = (\partial_\nu + \frac{1}{2}\lge \btwo, \nu \rge) \pthree.
\]
By equation (\ref{eq_pp3}), we must have
\[
\LFone(f) = \LFthree(f)
\]
for any $f \in C^6(\pMo)$ with sufficiently small data.
This implies
\begin{align}\label{eq_lambda3}
\LFtwo(f) = \LFone(f)  = \LFthree(f)
\end{align}
for such $f \in C^6(\pMo)$.

Now we would like to prove that (\ref{eq_lambda3}) implies
$\partial_t \uu = 0$.
Indeed, we follow the previous analysis and compare the nonlinear terms $\Ftwo$ and  $\Fthree$.
Note by (\ref{assump_h}), the linear parts are the same and we write it as
\begin{align*}
P = \sq  + \langle \btwo(x), \nbg  \rangle + \htwo(x)
\end{align*}
and we denote their parametrix in $\tM$ by $\Q$.
Our goal is to show
$\Ftwo (x, p,\partial_t p,\partial^2_t p) = \Fthree (x, p,\partial_t p,\partial^2_t p)$, i.e., $N_1 = N_0 = 0$,
using the assumption that the DN maps $\LFtwo, \LFthree$
are equal for small data.

In this case, the two nonlinear terms have different forms and therefore we have different asymptotic expansions for them.
The term $\Ftwo$ has been considered in Section \ref{sub_distorted}.
In the following, we perform the same analysis to the term $\Fthree$.


\subsection{The asymptotic expansion of $\Fthree$}\label{subsec_assyp2}
Let $f = \sum_{j = 1}^3 \epsilon_j f_j$.
The small boundary data $f_j$ are properly chosen as before.
Let $v_j$ solve the boundary value problem (\ref{eq_v1}) with the boundary source $f_j$, the one-form $\btwo$, the potential $\htwo$, and the nonlinearity $\Fthree$.

In the following, we denote $\pthree$ by $p$ and $\betatwo_{m+1}$ by $\beta_{m+1}$ for simplification.
Let $v = \sum_{j=1}^3 \ep_j v_j$ and  we have
\[
P (p -v) = \Fthree(x, p,\partial_t p, \partial^2_t p).
\]
It follows from (\ref{eq_nlterm3}) that
\begin{align*}
    p &= v + \sum_{m=1}
    \Qbg(\beta_{m+1}(x) \partial_t^2 (p^{m+1})
    + 2\uu^{-1} \beta_{m+1}(x) \partial_t(\uu^{m+1}) \partial_t (p^{m+1})
    + \uu^{-1}\beta_{m+1}(x) \partial^2_t (\uu^{m+1}) p^{m+1})
     \nonumber \\
    & = v + {B_2 + B_3 + \dots},
\end{align*}
where we rearrange the these terms by the order of $\epsilon$-terms, such that $B_2$ denotes the terms with $\epsilon_i \epsilon_j$,
$B_3$ denotes the terms with $\epsilon_i \epsilon_j \epsilon_k$, for $1 \leq i,j,k \leq 3$.
One can find the expansions of $B_2, B_3$ as
\begin{align*}
    B_2 &= \Qbg(\beta_2 \partial_t^2(v^2)
    + \cc_2 \partial_t (v^{2})
    + \dd_2  v^{2})
    = A_2 + \Qbg(
    \cc_2 \partial_t (v^{2})
   + \dd_2  v^{2}),\\
    B_3 &= \Qbg(2\beta_2 \partial_t^2(vB_2)+\beta_3 \partial_t^2(v^3)
    + \cc_3 \partial_t (vB_2)
    + \cc_3 \partial_t (v^3)
    + \dd_3 vB_2
    + \dd_3  v^{3}
    )\\
    & = A_3
    +\Qbg(
    2\beta_2
    \partial_t^2(v\Qbg(\cc_2 \partial_t (v^{2})
    + \dd_2  v^{2})))
    +\cc_3 \partial_t (vB_2)
    + \cc_3 \partial_t (v^3)
    + \dd_3 vB_2
    + \dd_3  v^{3}
    ),
\end{align*}
where we write
\[
\cc_k = 2\uu^{-1}\beta_{k} \partial_t(\uu^{k}),
\quad \dd_k = \uu^{-1}\beta_{k} \partial^2_t(\uu^{k})
\] to further simplify the notations.
For $N \geq 4$, we write
\[
B_N = \Qbg(\beta_N \partial_t^2 (v^N)) + \mathcal{Q}_N(\beta_2, \beta_3, \ldots, \beta_{N-1}),
\]
where $\mathcal{Q}_N(\beta_2, \beta_3, \ldots, \beta_{N-1})$ contains all terms only involved with $\beta_2, \beta_3, \ldots, \beta_{N-1}$.

Note that $v$ appears $j$ times in each $B_j, A_j$, $j = 2, 3$.
Same as before, we introduce the notation $B_2^{ij}$ to denote the result if we replace $v$ by $v_i, v_j$ in $B_2$ in order, and similarly the notations
$B_3^{ijk}$,
such that
\[
B_2 = \sum_{i,j} \ep_i \ep_j B_2^{ij}, \quad
B_3 = \sum_{i,j, k} \ep_i\ep_j\ep_k  B_3^{ijk},
\]
More explicitly, we have
\begin{align*}
    \begin{split}
        B_2^{ij} &= A_2^{ij}
        + \Qbg(\cc_2 \partial_t (v_{i}v_j)
        + \dd_2 v_i v_j),\\
        B_3^{ijk} &=
        A_3^{ijk}
        +\Qbg(
        2\beta_2
        \partial_t^2(v_i\Qbg(\cc_2 \partial_t (v_jv_k) + \dd_2 v_j v_k)
        +\cc_3 \partial_t (v_i B_2^{jk})
        + \cc_3 \partial_t (v_i v_j v_k)
        + \dd_3 v_i B_2^{jk}
        + \dd_3  v_i v_j v_k
        ).\\
    \end{split}
\end{align*}
\subsection{The third-order linearization}
In this subsection, we consider the third-order linearization of the DN maps for $\Ftwo, \Fthree$.
%
%
We define
\[
\Uthree^{(2)} =
\partial_{\epsilon_1}\partial_{\epsilon_2}\partial_{\epsilon_3} \ptwo |_{\epsilon_1 = \epsilon_2 = \epsilon_3=0}, \quad
\Uthree^{(3)} = \partial_{\epsilon_1}\partial_{\epsilon_2}\partial_{\epsilon_3} \pthree |_{\epsilon_1 = \epsilon_2 = \epsilon_3=0}.
\]
Recall in Section \ref{sub_distorted}, we show that
\[
\Uthree^{(2)}
= \sum_{(i,j,k) \in \Sigma(3)} A_3^{ijk}
=  \sum_{(i,j,k) \in \Sigma(3)}
\Qbg(2\beta_2 \partial_t^2(v_iA_2^{jk})+\beta_3 \partial_t^2(v_i v_j v_k)).
\]
The analysis above shows that
\begin{align*}
    &\Uthree^{(3)}
    =  \sum_{(i,j,k) \in \Sigma(3)} B_3^{ijk}\\
 = & \sum_{(i,j,k) \in \Sigma(3)} A_3^{ijk}
 +\Qbg(
 2\beta_2
 \partial_t^2(v_i(\cc_2 \partial_t (v_jv_k) + \dd_2 v_j v_k)
 +\cc_3 \partial_t (v_i B_2^{jk})
 + \cc_3 \partial_t (v_i v_j v_k)
 + \dd_3 v_i B_2^{jk}
 + \dd_3  v_i v_j v_k
 )\\
 \coloneqq & \Uthree^{(2)} + \Uthree^{(3,1)},
\end{align*}
where $\Uthree^{(3,1)}$ contains the lower order terms.
Note that $\Uthree^{(k)}$ is not the third order linearization of $\LFk$ for $k = 2, 3$ but they are  related by
\begin{align*}
    \partial_{\epsilon_1}\partial_{\epsilon_2}\partial_{\epsilon_3} \LFk(f) |_{\epsilon_1 = \epsilon_2 = \epsilon_3=0}
    = (\partial_\nu \Uthree^{(k)} + \frac{1}{2}\lge \bk, \nu \rge\Uthree^{(k)}) |_{\pMo}.
\end{align*}
Thus, we have
\begin{align*}
&\depthree \LFthree(f) \zepthree\\
=&
\depthree \LFtwo(f) \zepthree
+ (\partial_\nu \Uthree^{(3,1)} + \frac{1}{2}\lge \bk, \nu \rge\Uthree^{(3,1)}) |_{\pMo}.
\end{align*}
Since the these DN maps are equal, we must have
\begin{align}\label{eq_DNU3}
(\partial_\nu \Uthree^{(3,1)} + \frac{1}{2}\lge \bk, \nu \rge \Uthree^{(3,1)}) |_{\pMo} = 0.
\end{align}
\subsection{Analyze $\Uthree^{(3,1)}$}
Following the same analysis as before,
we can show that the principal symbol of $\Uthree^{(3,1)}$ is given by the terms
\[
{\mathcal{V}}^{(3)}
=
\sum_{(i,j,k) \in \Sigma(3)}
\Qbg(
\cc_3 \partial_t (v_i \Qbg(\beta_2 \partial_t^2 (v_j v_k)))
+ \cc_3 \partial_t (v_i v_j v_k)
+2\beta_2 \partial_t^2(v_i(\Qbg(\cc_2 \partial_t (v_{j}v_k))
).
\]
Then we can compute
\begin{align*}
\sigmp(\mathcal{V}^{(3)})
=&
\sigmp(\Qbg)(y, \eta, q, \zeta)
(\sum_{(i,j,k) \in \Sigma(3)}
\cc_3 (\ziz + \zjz + \zkz)
\frac{\beta_2(\zjz + \zkz)^2}{\|\zj + \zk\|^2_g}\\
&\quad \quad \quad \quad
+ \cc_3
(\ziz + \zjz + \zkz)
+ 2 \beta_2 (\ziz + \zjz + \zkz)^2
\frac{\cc_2(\zjz + \zkz)}{\|\zj + \zk\|^2_g}) \prod_{m=i,j,k,l}\sigmp(v_m).
\end{align*}
It follows that from (\ref{eq_DNU3}) we have
\[
\sum_{(i,j,k) \in \Sigma(3)}
\cc_3 \beta_2 \frac{(\zjz + \zkz)^2}{|\zj + \zk|^2_g}
+ \cc_3
+  2 \beta_2 \cc_2(\ziz + \zjz + \zkz)
\frac{(\zjz + \zkz)}{|\zj + \zk|^2_{g^*}} = 0.
\]
By \cite[Lemma ]{UZ_acoustic}, we have
\[
\sum_{(i,j,k) \in \Sigma(3)}
\frac{(\zjz + \zkz)^2}{|\zj + \zk|^2_{g^*}} = -1.
\]
This implies we have
\begin{align}\label{eq_I3}
\sum_{(i,j,k) \in \Sigma(3)}
\cc_3 (-\beta_2+1)
+  2 \cc_2 \beta_2  I_3(\zone, \ztwo, \zthree) = 0,
\end{align}
where we write
\[
I_3(\zone, \ztwo, \zthree) = \sum_{(i,j,k) \in \Sigma(3)}(\ziz + \zjz + \zkz)
\frac{(\zjz + \zkz)}{|\zj + \zk|^2_{g^*}}.
\]
In the following,
we would like to construct two different sets of lightlike covectors
$\zone, \ztwo, \zthree$ such that $I_3$ has different values, which implies we can construct a homogeneous linear system of two equations and show that
$\cc_3(-\beta_2 + 1) = \beta_2 \cc_2 = 0$.
Indeed, we can prove the following lemma.
\begin{lm}\label{lm_construction3}
    For fixed $q \in \mathbb{W}$ and $\zeta, \hat{\zeta}^{(1)} \in L^{*,+}_q M$,
    we can find three different sets of nonzero lightlike covectors
    \[
    (\zonek, \ztwok, \zthreek), \quad  k = 1,2,
    \]
    such that $\zeta = \sum_{j=1}^3 \zjk$ with {\rv $\zj = \alpha_j \hat{\zj}$} for some $\alpha_j$ and the vectors
    \[
    (1, I_3(\zonek, \ztwok, \zthreek)), \quad k=1,2,
    \] are linearly independent.
\end{lm}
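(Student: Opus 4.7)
The strategy is to exhibit two admissible decompositions of $\zeta$ yielding different values of $I_3$. I would work in local Minkowski coordinates at $q$, rotated so that $\zhone = (-1, 1, 0, 0)$ and $\zeta$ lies in the $(t, x^1, x^2)$-hyperplane, say $\zeta = (-1, \cos\phi_0, \sin\phi_0, 0)$ for some angle $\phi_0 \in (0, \pi)$. For each candidate pair of forward lightlike covectors $\zhtwo, \zhthree$ close to $\zhone$ (parametrized on the celestial sphere), the requirement $\zeta \in \mathrm{span}\{\zhone, \zhtwo, \zhthree\}$ is a single codimension-one condition, after which the scalars $\alpha_j$ solving $\zeta = \sum_{j} \alpha_j \zhj$ are uniquely determined.

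The next step is to simplify the expression for $I_3$. Using the normalization $(\zhj)_0 = -1$ (so $\zjz = -\alpha_j$ and $\sum_j \alpha_j = 1$) together with the identity $|\zj + \zk|^2_{g^*} = |\zeta - \zi|^2_{g^*} = -2 \lge \zeta, \zi \rge = -2\alpha_i \lge \zeta, \zhi \rge$, a direct calculation reduces the defining sum to
\[
I_3 = \sum_{i=1}^3 \frac{\alpha_i - 1}{\alpha_i \lge \zeta, \zhi \rge},
\]
which is a real-analytic function on the 3-dimensional admissible configuration space.

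To produce two configurations with different $I_3$, I plan to compare the planar case---where all $\zhj$ lie in the $(t, x^1, x^2)$-hyperplane containing $\zhone$ and $\zeta$---with a tilted case where $\zhtwo, \zhthree$ pick up small nonzero $x^3$-components, readjusted so that $\zeta$ remains in their span. In the planar case $I_3$ becomes an explicit rational function of the two in-plane angles, which can be evaluated directly. In the tilted case, the inner products $\lge \zeta, \zhi \rge$ and the coefficients $\alpha_j$ each receive nonzero second-order corrections in the tilt parameter, and these corrections do not cancel in the combined sum, so $I_3$ varies. Selecting two specific configurations at which the values differ then gives the required vectors $(1, I_3(\zonek, \ztwok, \zthreek))$, $k = 1, 2$, which are automatically linearly independent.

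The main obstacle is twofold. First, both chosen configurations must satisfy the positivity and causality conditions appearing in Claim \ref{cl_const_three1}---in particular $\alpha_j > 0$ (so each $\zj$ is forward lightlike) and the containment of $\zhtwo, \zhthree$ in $N^+(\zhone, \varsigma)$ required by Lemma \ref{lm_perturb_zeta_three}. This is handled by starting from a base planar configuration lying well inside the admissible region with strict positivity margins and perturbing only slightly. Second, confirming non-cancellation under the tilt is an explicit but somewhat tedious computation; this can alternatively be phrased as an analyticity argument, since $I_3$ cannot be locally constant on the admissible manifold unless the coefficients of the leading-order tilt corrections vanish identically, which one rules out by computing one coefficient and seeing it is a nonzero algebraic function of $\phi_0$ alone.
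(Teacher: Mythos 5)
The paper's proof stays entirely within the \emph{planar} family: it takes the symmetric one\mbox{-}parameter choice $\zhtwo = (-1,\cos\theta,\sin\theta,0)$, $\zhthree = (-1,\cos\theta,-\sin\theta,0)$ and, by a chain of elementary simplifications using the lightlike relation $\sum_j \alpha_j \zhj = \zeta$, arrives at the closed form
\[
I_3 = \frac{2\cos\theta + 1}{2(\cos\varphi + \cos\theta)},
\]
which is manifestly non\mbox{-}constant in $\theta$; choosing two values of $\theta$ finishes the lemma. Your reduction $I_3 = \sum_i \frac{\alpha_i-1}{\alpha_i\lge\zeta,\zhi\rge}$ is correct (modulo an overall factor coming from the normalization of the $\Sigma(3)$ sum), and this is essentially the same algebraic manipulation the paper performs in a different order. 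So the setup matches.

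However, there is a genuine gap in what you propose afterward. The heart of the lemma is to show that $I_3$ actually \emph{varies} as the configuration varies, and this is precisely the step you leave undone: you state that the tilted second\mbox{-}order corrections ``do not cancel'' and that the relevant leading coefficient ``is a nonzero algebraic function of $\phi_0$ alone,'' but you do not compute it. This cannot be waved away as a formality: the companion quantity appearing two lines earlier in the paper, $\sum_{(i,j,k)\in\Sigma(3)}(\zjz+\zkz)^2/|\zj+\zk|^2_{g^*}$, is \emph{identically equal to} $-1$ on the entire configuration manifold, so a priori $I_3$ could very well be constant too, and the analyticity framing does nothing to rule that out. In addition you complicate matters by reaching for tilted configurations: since you have already noted that $I_3$ is an explicit rational function of the planar angles, you could (as the paper does) simply evaluate it at two planar configurations and be done, avoiding entirely the bookkeeping of readjusting the span condition under a tilt and re\mbox{-}verifying the positivity margins $\alpha_j>0$. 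As written, the argument would only become a proof after carrying out the very computation you defer; the paper carries out the planar version of that computation, and that closed form is the entire content of the lemma.
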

\begin{proof}
    First we choose local coordinates $x = (x^0, x^1, x^2, x^3)$ at $q$ such that $g$ coincides with the Minkowski metric.
    Then we rotate the coordinate system in the spatial variables such that
    $\zeta, \zj, j = 1,2,3$ are in the same plane $\zeta_3 = 0$, since they are linearly dependent.
    Without loss of generality, we assume
    \[
    \zeta = \lambda \zh, \quad \zone = \alpha_1 \zhone,
    \quad \ztwo = \alpha_2 \zhtwo,
    \quad \zthree = \alpha_3 \zhthree,
    \]
    where $\lambda, \alpha_1, \alpha_2, \alpha_3$ can be solved in the following and
    \begin{align*}
        &\zh = (-1, -\cos \varphi, \sin \varphi, 0), \quad
        &\zhone = (-1, 1, 0, 0),\\
        &\zhtwo = (-1, \cos \theta, \sin \theta, 0), \quad
        &\zhthree = (-1, \cos \theta, -\sin \theta, 0),
    \end{align*}
    with distinct parameter $\varphi, \theta \in (0, 2\pi)$.
    From $\zeta = \sum_{j=1}^3 \zj$, a direct computation shows that
    \begin{align*}
        &\lambda = 2 \sin \theta(1 - \cos \theta),
        &\alpha_1 = -2 \sin \theta(\cos \varphi + \cos \theta),\\
        &\alpha_2 = (1 + \cos \varphi) \sin \theta + (1  - \cos \theta) \sin \varphi,
        &\alpha_3  = (1 + \cos \varphi) \sin \theta - (1  - \cos \theta) \sin \varphi.
    \end{align*}
    Note that we do not need these explicit forms in the following.
    Instead, we compute
    \begin{align*}
        \langle \zhone, \zhtwo \rangle_g = \cos \theta - 1,
        \quad \langle \zhone, \zhthree \rangle_g = \cos \theta - 1,
        \quad \langle \zhtwo, \zhthree \rangle_g = 2(\cos^2 \theta - 1).
    \end{align*}
    With $\zeta$ lightlike, one has
    \begin{align*}
        &| \alpha_1 \zhone + \alpha_2 \zhtwo + \alpha_3 \zhthree|_{g^*}^2 = 0\\
        \Rightarrow \quad &
        (\alpha_1 \alpha_2 + \alpha_1 \alpha_3)(\cos \theta - 1) + \alpha_2 \alpha_3 \cdot 2(\cos \theta - 1)(\cos \theta + 1) = 0\\
        \Rightarrow \quad &
        \frac{\alpha_2 + \alpha_3}{\alpha_2 \alpha_3}
        =\frac{1}{\alpha_3} + \frac{1}{\alpha_2}
        =-\frac{2(\cos\theta + 1)}{\alpha_1}.
    \end{align*}
    It follows that
    \begin{align*}
        {I}_3(\zone, \ztwo, \zthree) &=
        -\lambda
        (\frac{\alpha_1 + \alpha_2}{2(\cos \theta -1)\alpha_1 \alpha_2}
        + \frac{\alpha_1 + \alpha_3}{2(\cos \theta -1)\alpha_1 \alpha_3}
        + \frac{\alpha_2 + \alpha_3}{4(\cos \theta -1)(\cos \theta + 1)\alpha_2 \alpha_3})\\
        & = \frac{-\lambda}{2(\cos \theta -1)}( \frac{\alpha_1 + \alpha_2}{\alpha_1 \alpha_2} + \frac{\alpha_1 + \alpha_3}{\alpha_1 \alpha_3}
        - \frac{1}{(\cos \theta + 1)} \cdot \frac{(\cos \theta + 1)}{\alpha_1}) \\
        & = \frac{-\lambda}{2(\cos \theta -1)\alpha_1}(\frac{\alpha_1 + \alpha_2}{\alpha_2} + \frac{\alpha_1 + \alpha_3}{\alpha_3}
        - {1})\\
        & = \frac{-\lambda}{2(\cos \theta -1)\alpha_1}
        (\alpha_1(\frac{1}{\alpha_2} + \frac{1}{\alpha_3}) +1)\\
        & = \frac{-\lambda}{2(\cos \theta -1)\alpha_1}
        (-{2(\cos\theta + 1)} +1)\\
        &= \frac{-2 \sin \theta(1 - \cos \theta)}{2(\cos \theta -1)(-2 \sin \theta(\cos \varphi + \cos \theta))}
        (-2\cos\theta -1)
        = \frac{2\cos\theta +1}{2 (\cos \varphi + \cos \theta)}.
    \end{align*}
By fixing $\varphi$ and choosing different $\theta$, we can find two sets of $(\zonek, \ztwok, \zthreek), k =1,2$ such that
$I_3(\zonek, \ztwok, \zthreek)$ are different.
This proves the lemma.
\end{proof}
Thus, from (\ref{eq_I3}) we conclude that $\cc_3(-\beta_2 + 1) = \beta_2 \cc_2 = 0$
at any $q \in \mathbb{W}$.
Now we consider the open set $W_2 =  \{x \in \mathbb{W}: \beta_2(x) = 0\}^\text{int}$, that is,  the interior of the set where $\beta_2(x) = 0$.

For $x \in W \setminus W_2$, there exists a sequence $x_j$ converging to $x$ for $j \rightarrow \infty$, such that $\beta_2(x_j) \neq 0$.
For each $x_j$, we have $\cc_2(x_j) = 0$, which implies $\partial_t(\uu)(x_j) = 0$.
It follows that $\partial_t(\uu^2)(x) = 0$ for any $x$ in $W \setminus W_2$.

If $W_2$ is not empty, for $x \in W_2$ we must have $\cc_3 = 2 \uu^{-1} \beta_3 \partial_t(\uu^3) = 0$, since $-\beta_2 + 1= 1$.
For convenience, we define a sequence of open sets
\[
W_k = \{x \in W_{k-1}: \beta_k(x) = 0\}^\text{int}
\]
as a subset of $W_{k-1}$, for $k=3, 4 \ldots$.
Similarly for any  $x \in W_2 \setminus W_3$, there is a sequence $x_j$ converging to $x$ for $j \rightarrow \infty$, such that $\beta_3(x_j) \neq 0$, which implies $\partial_t(\uu^3)(x_j) = 0$.
Then we must have $\partial_t \uu = 0$ on $W_2 \setminus W_3$.

If $W_3$ is not empty, for $x \in W_3$ the nonlinear coefficients $\beta_2, \beta_3$ vanish in a small neighborhood of $x$.
In this case, we consider the fourth-order terms in the asymptotic expansion of $\Fthree$, with $\beta_2 = \beta_4 = 0$, i.e.,
\begin{align*}
    B_4 = \Qbg(\beta_4\partial_t^2(v^4) + \cc_4 \partial_t(v^4) + \dd_4 v^4).
\end{align*}
We consider the fourth-order linearization of the DN maps to have
\begin{align*}
    (\partial_\nu \Ufour^{(3)} - \frac{1}{2}\lge \bk, \nu \rge\Ufour^{(3)} |_{\pMo} = 0,
\end{align*}
where the principal part of $\Ufour^{(3)}$ is given by
\[
\sigmp(\Qbg)(y, \eta, q, \zeta)
\sum_{(i,j,k,l) \in \Sigma(4)}
 (\ziz + \zjz + \zkz + \zlz) \cc_4 \beta_4 \prod_{m=i,j,k,l}\sigmp(v_m)(q, \zeta^m).
\]
It follows that $\cc_4 \beta_4 = 0$, for $x \in W_3$.
The same argument shows that $\partial_t \uu = 0$ on $W_3 \setminus W_4$.
One can continue this process by considering the $N$-th order linearization, if $W_{N-1}$ is not empty, for $N \geq 4$.
Note that we assume for each $x \in \mathbb{W}$, there exists some index $j$ such that
$\beta_j(x) \neq 0$.
This implies that $x \neq W_j$ for such $j$.
Therefore, we must have $\partial_t\uu = 0$ on $\mathbb{W}$.

\section{Appendix}
\subsection{Energy estimates}\label{subsec_energy}
The well-posedness of nonlinear problem {(\ref{eq_problem})} for a small boundary source $f$ 
can be established following similar arguments as in \cite{Hintz2020},
see also
\cite{ultra21, Uhlmann2021a} and in particular \cite{UZ_acoustic}.
Compared to  \cite{UZ_acoustic}, the difference is that we have a lower order term in the differential operator.
Recall in  \cite[Section 2]{UZ_acoustic}, one uses energy estimates for the liner problem in \cite[Theorem 3.1]{Dafermos1985}, to construct a contraction map for the nonlinear problem.
To perform the same arguments, we need a slightly modified version of \cite[Theorem 3.1]{Dafermos1985}.
We briefly state the setting and modification in the following.

Recall $M = \mathbb{R} \times \Omega$, where $\Omega$ is a bounded set in $\mathbb{R}^3$ with smooth boundary, and we write $x = (t,x') = (x^0, x^1, x^2, x^3) \in M$.
In the following, we consider the case
when the leading term of the differential operator is given by $\partial^2_{t} +
\sum_{i,j = 1}^3 a_{ij}(x) \partial_{i} \partial_{j}$.
The case for a globally hyperbolic Lorentzian manifold can be considered in a similar way.
We first review the result in \cite[Theorem 3.1]{Dafermos1985} and then modify it to allow an arbitrary first-order term.
In \cite[Section 3]{Dafermos1985},
one considers the linear initial value problem
\begin{align*}
        \partial^2_{t} u(t,x') + B(t) u(t,x') = f(t,x'), \quad \mbox{in } (0,T) \times \Omega,\\
        u(0,x') = u^0, \quad \partial_t{u}(0,x') = u^1,
\end{align*}
where $B(t)$ is a linear differential operator w.r.t $x'$ satisfying the assumptions (B1), (B2), and (B3) in the following. Here instead of the original  assumption (B1), we use the stronger assumption (B1') in \cite{Dafermos1985} and denote it by (B1) here.
This is enough for our model.
In addition,
let $H_k(\Omega) = W^{k,2}(\Omega)$ be the Sobolev space and
we choose a suitable subspace $V$ of $H_1(\Omega)$, dense in $H_0(\Omega)$.
We would like to find a solution $u$ in the space $X_k \equiv V \cap H_k(\Omega)$, to accommodate the boundary condition.
For convenience, we write $\|v(t)\|_{H_k} = \|v(t)\|_k$ for any $v(t) \in H^k(\Omega)$ and we denote by $\lge v, w \rge$ the inner product of two functions in $L^2(\Omega)$.


\begin{itemize}
    \item[(B1)] We assume $B(t) \in C^{m-1}([0, T]; \mathcal{L}_{2,m})$, where let $\mathcal{L}(Z, Y)$ denotes the space of bounded linear operators from $Z$ to $Y$ and we define
    \[
    \mathcal{L}_{2,m} \equiv \bigcap_{j=-1}^{m-2} \mathcal{L}(H_{j+1}(\Omega), H_j(\Omega)).
    \]
    \item[(B2)] For each $t \in[0, T]$ and $k=0, \ldots, m-2$, the conditions $v \in X_k$ and $B(t) v \in H_k$ together imply that $v \in X_{k+2}$. Moreover, there is a constant $\mu>0$ such that
    \begin{align*}
      \|v\|_{k+2} \leq \mu\left(\|v\|_k+\|B(t) v\|_k\right) \quad \forall v \in X_{k+2}, \quad t \in[0, T], \quad k=0, \ldots, m-2.
    \end{align*}
    \item[(B3)] There are constants $\kappa, \lambda, \eta>0$ such that
    \begin{align*}
       \langle B(t) v, v\rangle+ \kappa\|v\|_0^2 \geq \lambda\|v\|_1^2 \quad \forall v \in V, \quad t \in[0, T],
    \end{align*}
and
\begin{align*}
    |b(t ; v, \omega)| \leq \eta\|v\|_1 \cdot\|\omega\|_0 \quad \forall v, \omega \in V, \quad t \in[0, T],
\end{align*}
where
\begin{align*}
    \quad b(t ; v, \omega):=\langle B(t) v, \omega\rangle-\langle B(t) \omega, v\rangle \quad \forall v, \omega \in V, \quad t \in[0, T].
\end{align*}
\end{itemize}

In particular, for our model, suppose $u^0 = u^1 = 0$ and we impose the boundary condition $u|_{\pMo} = 0$ by choosing $V = W_0^{1,2}(\Omega)$.
Moreover, we suppose
\begin{align}\label{def_B}
B(t) u = \sum_{i,j = 1}^3 a_{ij}(t,x') \partial_{x_i} \partial_{x_j} u + \langle b(x), \nabla u \rangle + B_0(x) u \equiv B_2(t) u + B_1(t) u + B_0(x) u,
\end{align}
where the matrix $\{a_{ij}(t,x')\}$ is symmetric and positive definite with smooth entries,
$\nabla u$ denotes the gradient of $u$ w.r.t. ${x = (t,x')}$,
and the one-form $b(x)\in C^\infty(M; T^*M)$ with the potential $B_0(x) \in C^\infty(M)$.
We write $B_1(t) = b_0 \partial_t + \sum_{j=1}^3 b_j(x) \partial_j$, where $b_k(x) \in C^\infty(M)$ for $k =0, 1, 2, 3$.
In the following, first, we would like to show a modified version of \cite[Theorem 3.1]{Dafermos1985} for $B(t)$ given by (\ref{def_B}), when $b_0(t,x') \equiv 0$, i.e.,
$B_1(t) =  \sum_{j=1}^3 b_j(x) \partial_j$.
Then the case with $b_0(t,x')$ can be proved by considering an integrating factor $e^{\phi(t,x')}$, where $\phi(t,x') = \int_0^t b_0(s, x') \diff s$ is smooth over $M$.

For $R>0$, we define $Z^m(R,T)$ as the set containing all functions $v$ such that
\[
v \in \bigcap_{k=0}^{m} W^{k, \infty}([0,T]; H_{m-k}(\Omega)),
\quad \|v\|^2_{Z^m} = \sup_{t \in [0, T]} \sum_{k=0}^m \|\partial_t^k v(t)\|^2_{H_{m-k}} \leq R^2.
\]
We abuse the notation $C$ to denote different constants that depends on $m, M, T$.
Recall \cite[Theorem 3.1]{Dafermos1985} shows that with $B(t)$ satisfying (B1), (B2), (B3),
there exists a unique solution
\[
u \in \bigcap_{k=0}^{m} C^{k}([0,T]; X_{m-k})
\]
with the estimate
\[
\|u\|^2_{Z^m} = \sup_{t \in [0, T]} \sum_{k=0}^m \|\partial_t^k u (t)\|^2_{{m-k}} \leq Ce^{KT}
(
\sup_{t \in [0, T]} \sum_{k=0}^{m-2} \|\partial_t^k f (t)\|^2_{{m-2-k}}
+
\int_0^T \|\partial_t^{m-1} f (t)\|_{H^0}^2 \diff t
),
\]
where $C$ and $K$ are constants depending on the constants in the estimates of (B2), (B3).

For our purpose, we would like to relax the second estimate
\begin{align*}
    |b(t ; v, \omega)| \leq \eta\|v\|_1 \cdot\|\omega\|_0 \quad \forall v, \omega \in V, \quad t \in[0, T],
\end{align*}
 in (B3) to allow an arbitrary first-order term in $B(t)$, see (\ref{def_B}).

First, we note that the principal part of $B(t)$, i.e.,
$B_2(t) = \sum_{i,j = 1}^3 a_{ij}(t,x') \partial_{x_i} \partial_{x_j}$
satisfies (B1), (B2), (B3).
Now with extra terms $B_1(t)$ and $B_0(t)$ as above,
the condition (B1) and (B2) still hold, since $B(t)$ is an elliptic operator.
For (B3), we have
\[
\langle B_2(t)  v, v \rangle + \kappa \|v\|_0^2 \geq \lambda \|v\|_1^2, \quad \forall
v \in V, \ t \in [0,T],
\]
where $\lambda, \kappa > 0$ are constants.
Since $h(x)$ and $b_j(x), j = 1,2,3$ are smooth over $M$, there exist $c_1, c_2$ such that
\[
|\langle B_1(t) v, v \rangle| \leq c_1 \|v\|_1 \| v\|_0,
\quad
|\langle B_0(t) v, v \rangle| \leq c_0 \|v\|_0 \| v\|_0.
\]
Then we have
\begin{align*}
    \langle B(t) v, v \rangle + \kappa \|v\|_0^2
    &\geq
    \lambda \|v\|_1^2
    - c_1 \|v\|_1 \| v\|_0 - c_0  \|v\|_0 \| v\|_0\\
    &\geq
    \frac{\lambda}{2} \|v\|_1^2 - (\frac{c_1^2}{\lambda^2} + c_0) \|v\|_0 \| v\|_0,
\end{align*}
which implies $B(t)$ satisfies the first estimate in (B3) with new constants $\frac{\lambda}{2}$ and $\kappa + \frac{c_1^2}{\lambda^2} + c_0$.
For the second assumption in (B3),  if we write
\[
b(t; v, w) \equiv \langle B(t) v, w\rangle -
\langle B(t) w, v\rangle,
\]
it requires that
\begin{align}\label{eq_35}
|b(t; v, w)| \leq \eta \|v\|_{H^1} \|w\|_{H^0}, \quad \forall v, w \in V, \ t \in [0, T].
\end{align}
Let $b_j(t; v, w) \equiv \langle B_j(t) v, w\rangle -
\langle B_2(t) w, v\rangle$, for $j = 2, 1, 0$.
Note that $b_2(t; v, w), b_0(t; v, w)$ satisfy this estimate, since $\{a_{ij}(x)\} + B_0(x) I_3$ is symmetric.
But for $j =1$, we have
\[
|b_1(t; v, w)|  = | \langle \sum_{j=0}^3 b_j(x) \partial_j v, w\rangle -
\langle\sum_{j=1}^3 b_j(x) \partial_j w, v\rangle|,
\]
which not necessarily satisfies (\ref{eq_35}).
Thus, we rewrite $B(t)$ as two parts
$
B(t) = B_s(t) + B_1(t),
$
where $B_s(t) = B_2(t) + B_0(t)$.
If we check the proof of
\cite[Theorem 3.1]{Dafermos1985},
the assumption (\ref{eq_35}) is used in several places that we list below.

Firstly, in the proof of \cite[Lemma 3.1]{Dafermos1985},
one constructs a sequence of approximate solutions $\{\unt\}_{n=1}^\infty$ to employ the method of Faedo-Galerkin.
The assumption (\ref{eq_35}) is used to estimate (3.22) there.
The goal is to show that the sequence $\{\unt\}_{n=1}^\infty$ is bounded in $W^{m,2}([0, T]; H_0)$ and in $W^{m-1,2}([0, T]; V)$.
Note that (3.22) is derived from (3.20) by setting $\omega = 2 \dtm \unt$, i.e.,
\begin{equation*}
    \begin{aligned}
        &\langle \dtmplus \unt, 2 \dtm \unt\rangle + \langle B(t) \dtmminus\unt, 2\dtm \unt \rangle\\
        =&- \sum_{k=1}^{m-1}
        \binom {m-1}{k} \langle  \partial_t^{k} B(t) \partial_t^{m-1-k} \unt, 2 \dtm \unt  \rangle + \langle \dtmminus f(t), 2 \dtm \unt \rangle.
    \end{aligned}
\end{equation*}
With $B_1(t) = \sum_{j=1}^3 b_j(x) \partial_j$ , we rewrite (3.22) as
\begin{equation*}
    \begin{aligned}
        &2 \langle \dtmplus \unt,  \dtm \unt\rangle
        + 2\langle B_s(t) \dtmminus\unt, \dtm \unt \rangle
        + 2\langle B_1(t) \dtmminus\unt, \dtm \unt \rangle\\
        =&-2 \sum_{k=1}^{m-1}
        \binom {m-1}{k} \langle \partial_t^{k} B(t) \partial_t^{m-1-k} \unt,  \dtm \unt  \rangle + 2\langle \dtmminus f(t),  \dtm \unt \rangle.
    \end{aligned}
\end{equation*}
It follows that
\begin{equation}\label{eq_du}
    \begin{aligned}
        &\frac{\diff}{\diff t} (\| \dtm \unt \|_{0}^2)
        + \frac{\diff}{\diff t} (\langle B_s(t) \dtmminus\unt, \dtmminus \unt \rangle)
        \\
        =&
        -2\sum_{k=1}^{m-1}
        \binom {m-1}{k} \langle  \partial_t^{k} B(t) \partial_t^{m-1-k} \unt,  \dtm \unt  \rangle
       - 2\langle B_1(t) \dtmminus\unt, \dtm \unt \rangle\\
        & \quad
        -\langle B_s(t) \dtmminus\unt, \dtm \unt \rangle
        + \langle B_s(t) \dtm\unt, \dtmminus \unt \rangle\\
        & \quad +\langle \partial_t B_s(t) \dtmminus\unt, \dtmminus \unt \rangle + 2\langle \dtmminus f(t),  \dtm \unt \rangle.
    \end{aligned}
\end{equation}
In addition, for $k = 1, \ldots, m-1$, we have
\begin{align}\label{eq_Bk}
    &2\int_0^t \langle  \partial_t^{k} B(s) \partial_t^{m-1-k} \uns,  \dtm \uns  \rangle
    \diff s \\
    = & \quad  2\langle  \partial_t^{k} B(t) \partial_t^{m-1-k} \unt,  \dtmminus \unt  \rangle
    - \langle \partial_t^{k} B(0) \partial_t^{m-1-k} \unz, 2 \dtmminus \unz  \rangle
    \nonumber \\
    & \quad
    - 2\int_0^t \langle  \partial_t^{k+1} B(s) \partial_t^{m-1-k} \uns, \dtmminus \uns  \rangle
    \diff s
    -2\int_0^t \langle  \partial_t^{k} B(s) \partial_t^{m-k} \uns,  \dtmminus \uns  \rangle
    \diff s. \nonumber
\end{align}
We plug (\ref{eq_Bk}) into (\ref{eq_du}) and integrate this equation w.r.t. $t$ to have
\begin{equation}\label{eq_un}
    \begin{aligned}
        &\| \dtm \unt \|_{0}^2
        + \langle B_s(t) \dtmminus\unt, \dtmminus \unt \rangle
        \\
        =&\| \dtm \unz \|_{0}^2 +
        \langle B_s(0) \dtmminus\unz, \dtmminus \unz \rangle \\
        & + \sum_{k=1}^{m-1}\binom {m-1}{k}  \big(\langle  \partial_t^{k} B(t) \partial_t^{m-1-k} \unt, 2 \dtmminus \unt  \rangle
        - \langle \partial_t^{k} B(0) \partial_t^{m-1-k} \unz, 2 \dtmminus \unz  \rangle
        \\
        &  +
        \int_0^t \langle  \partial_t^{k+1} B(s) \partial_t^{m-1-k} \uns+ \partial_t^{k} B(s) \partial_t^{m-k} \uns, 2 \dtmminus \uns  \rangle
        \diff s \big)
        \\
        &  + \int_0^t \langle \dtmminus f(s), 2 \dtm \uns \rangle \diff s
         - \int_0^t \langle B_1(s) \dtmminus\uns, 2\dtm \uns \rangle \diff s \\
        &   - \int_0^t \langle b_s(t; \dtmminus \uns, \dtm \uns) \diff s
        + \int_0^t \langle \partial_t B_s(s) \dtmminus\uns, \dtmminus \uns \rangle \diff s.
    \end{aligned}
\end{equation}
Note that
\[
\langle B_s(t) \dtmminus\unt, \dtmminus \unt \rangle
\geq
\lambda\| \dtmminus \unt \|_{1}^2
- \kappa \| \dtmminus \unt \|_{0}^2,
\]
for some constant $\lambda, \kappa > 0$.
On the other hand, we have
\begin{align}\label{eq_B1}
\| B_1(s) \dtmminus\uns\|_0^2 \leq C \|\dtmminus\uns\|_1^2, 
\end{align}
and integrating by parts w.r.t. $x'$ we have
\begin{align*}
  |\langle  \partial_t^{k} B(s) \partial_t^{m-1-k} \uns, \dtmminus \uns  \rangle|
  \leq &
  C ((\|\partial_t^{m-1-k} \uns\|_1 )
  \|\dtmminus \uns\|_1  + \text{b. v.})\\
  \leq &
  C (\|\partial_t^{m-1-k} \uns\|_1^2 +
  \|\dtmminus \uns\|_1^2  + \text{b. v.}).
\end{align*}
This implies that
\begin{align*}
|\int_0^t \langle  \partial_t^{k+1} B(s) \partial_t^{m-1-k} \uns, 2 \dtmminus \uns  \rangle
\diff s|
&\leq C \int_0^t  \| \partial_t^{m-1-k} \uns \|_1^2
 +  \|\dtmminus \uns \|_1^2
\diff s,\\
|\int_0^t \langle  \partial_t^{k} B(s) \partial_t^{m-k} \uns, 2 \dtmminus \uns  \rangle
\diff s|
&\leq C
\int_0^t  \| \partial_t^{m-k} \uns \|_1^2
+  \|\dtmminus \uns \|_1^2
\diff s.
\end{align*}
Moreover, we have
\[
\partial_t^j \unt
= \partial_t^j \unz + \int_0^t \partial_t^{j+1} u_n(s) \diff s,
\]
which implies for $j = m-1, \ldots, 0$ we have
\begin{align}\label{eq_jj1}
\|\partial_t^j \unt \|_0^2
\leq \|\partial_t^j \unz \|^2
+ \int_0^t \| \partial_t^{j+1} u_n(s) \diff s\|_0^2 \diff s.
\end{align}
Thus, equations (\ref{eq_un}) and (\ref{eq_Bk}) imply that
\begin{equation*}
    \begin{aligned}
        & \sum_{j=0}^{m}   \| \partial_t^{j} \unt \|_0^2
        + \sum_{j=0}^{m-1}   \| \partial_t^{j} \unt \|_1^2 \\
        \leq & CN + K(\sum_{j=0}^{m} \int_0^t  \| \partial_t^{j} \uns \|_0^2 \diff s+ \sum_{j=0}^{m-1} \int_0^t  \| \partial_t^{j} \uns \|_1^2 \diff s),
    \end{aligned}
\end{equation*}
where with zero initial condition we write
\begin{align*}
N &= \sum_{j=0}^{m}   \| \partial_t^{j} \unz \|_0^2
+ \sup_{t \in [0, T]} \sum_{k=0}^{m-2} \|\partial_t^k f (t)\|^2_{H^{m-2-k}}
+ \int_0^T \|\partial_t^{m-1} f (t)\|_{H^0}^2 \diff t\\
&= \sup_{t \in [0, T]} \sum_{k=0}^{m-2} \|\partial_t^k f (t)\|^2_{H^{m-2-k}}
 + \int_0^T \|\partial_t^{m-1} f (t)\|_{H^0}^2 \diff t.
\end{align*}
Thus, the sequence $\{u_n\}_{n=1}^\infty$ is bounded in the desired space and one can prove the existence of a weak solution by a standard argument.

Secondly, we can prove the estimate in (3.28) in the proof of \cite[Lemma 3.2]{Dafermos1985}, with an arbitrary smooth one-form.
Indeed, (3.28) is obtained in a similar way as (3.22).
This time, we have
\begin{equation*}
    \begin{aligned}
        &\langle \dtmplus \unt, 2 \dtm \unt\rangle + \langle B_s(t) \dtmminus\unt, 2\dtm \unt \rangle + \langle B_1(t) \dtmminus\unt, 2\dtm \unt \rangle\\
        =&-\sum_{k=1}^{m-1}
        \binom {m-1}{k} \langle \partial_t^{k} B(t) \partial_t^{m-1-k} \unt, 2 \dtm \unt  \rangle + \langle \dtmminus f(t), 2 \dtm \unt \rangle.
    \end{aligned}
\end{equation*}
We can rewrite (3.28) as
\begin{equation*}
    \begin{aligned}
        &\| \dtm \unt \|_{0}^2
        + \langle B_s(t) \dtmminus\unt, \dtmminus \unt \rangle
        \\
        =& \| \dtm \unz \|_{0}^2
        +\langle B_s(0) \dtmminus\unz, \dtmminus \unz \rangle \\
        & \quad + \sum_{k=1}^{m-1}\binom {m-1}{k} \int_0^t \langle  \partial_t^{k} B(s) \partial_t^{m-1-k} \uns, 2 \dtm \uns  \rangle \diff s
        \\
        & \quad + \int_0^t \langle \dtmminus f(s), 2 \dtm \uns \rangle \diff s
        - \int_0^t \langle B_1(s) \dtmminus\uns, 2\dtm \uns \rangle \diff s \\
        & \quad  - \int_0^t (\langle B_s(t)\dtmminus \uns, \dtm \uns \rge - \langle B_s(t)\dtm \uns, \dtmminus \uns \rge) \diff s\\
        &\quad + \int_0^t \langle \partial_t B_s(s) \dtmminus\uns, \dtmminus \uns \rangle \diff s.
    \end{aligned}
\end{equation*}
By (\ref{eq_B1}) and (\ref{eq_jj1}), this implies
\[
\| \partial_t^m \unt \|^2_{0} + \| \partial_t^{m-1} \unt \|^2_{1}
\leq CN + K \int_0^t \sum_{k=0}^m \| \partial_t^k \uns \|^2_{m-k} \diff s, \quad \forall t \in [0, T],
\]
which proves (3.32) in \cite{Dafermos1985}.
Then we can follow the same analysis in the rest of the proof of \cite[Lemma 3.2]{Dafermos1985}.
This proves the desired result.

\subsection{Local well-posedness}\label{Sec_well}
Now let $T >0$ be fixed and let $m \geq 5$.
We consider the boundary value problem for the nonlinear equation
\begin{equation*}
    \begin{aligned}
        \partial_t^2 p  - c^2(x) \Delta p + \lge b(x), \nabla p \rge + h(x) p
        - F(x, p,\partial_t p, \partial^2_t p) &= 0, & \  & \mbox{in } (0,T) \times \Omega,\\
        p &= f, & \ &\mbox{on } (0,T) \times \partial \Omega,\\
        p = {\partial_t p} &= 0, & \  & \mbox{on } \{t=0\},
    \end{aligned}
\end{equation*}
where we assume $F(x,p, \partial_t, \partial^2_t) =\sum_{m=1}^{+\infty} \beta_{m+1}(x) \partial_t^2 (p^{m+1})$ with
$b(x) \in C^\infty(M; T^*M)$, $h(x) \in C^\infty(M)$,  and $\beta_{m+1}(x) \in C^\infty(M)$ for $m \geq  1$.
Suppose $f \in C^{m+1} ([0,T] \times \partial \Omega)$ satisfies $\|f\|_{C^{m+1} ([0,T] \times \partial \Omega)} \leq \epsilon_0$, with small positive number $\epsilon_0$ to be specified later.
Then there exists a function $\tf \in C^{m+1} ([0,T] \times \Omega)$ such that $ \tf|_{\partial M} = f$ and
\[
\|\tf\|_{C^{m+1} ([0,T] \times \Omega)} \leq\|f\|_{C^{m+1} ([0,T] \times \partial \Omega)} .
\]
Let $\tp = p -\tf $ and
we rewrite the nonlinear term as
\begin{align}\label{eq_F}
F(x, p, \partial_t p, \partial^2_t p)
& = \sum_{j = 1}^{+\infty} \beta_{j+1}(x) \partial_t^2(p^{j+1}) \\
&  = (\sum_{j = 1}^{+
    \infty} (j+1)\beta_{j+1}(x)  p^{j}) \partial^2_{t} p
+ (\sum_{j = 1}^\infty (j+1)j  \beta_{j+1}(x) p^{j-1}) \partial_t p \partial_t p \nonumber\\
& \equiv F_1(x,p)p \partial_{t}^2 p + F_2(x,p) (\partial_t p)^2. \nonumber
\end{align}
Note that the functions $F_1, F_2$ 
are smooth over $M \times \mathbb{R}$.
Then $\tp$ must solve the equation
\begin{align*}
&(1 - F_1(x,\tp + \tf)(\tp + \tf))\partial_t^2 \tp - c(x)^2 \Delta \tp
+ \lge b(x), \nbg \tp \rge + h\tp \\
= &
-(\partial_t^2  - c(x)^2 \Delta + \lge b(x), \nabla  \rge  + h)\tf
+ F_1(x,\tp + \tf)(\tp + \tf)\partial_{t}^2 \tf  +
F_2(x,\tp + \tf) (\partial_t \tp + \tf)^2.
\end{align*}
When $\tp$ and $\tf$ are small enough, the factor
$1 - F_1(x,\tp + \tf)(\tp + \tf)$ is smooth and nonzero.
We define
\begin{align*}
\kappa(x,p) = \frac{1}{1 - F_1(x,p)p}, \quad
\alpha(x, p) = \frac{c(x)^2}{1 - F_1(x,p)p},\\
q_1(x,p) = \frac{F_1(x,p)}{1 - F_1(x,p)p}, \quad
q_2(x,p) = \frac{F_2(x,p)}{1 - F_1(x,p)p},
\end{align*}
and write the operator as
\begin{align*}
  P(x, p) = \partial_t^2 - \alpha(x,p) \Delta
  + \lge \kappa(x,p)b(x), \nabla  \rge + \kappa(x,p)h(x)
\end{align*}
with the nonlinear term
\begin{align*}
\tF (x, \partial_t^2 \tf, \Delta \tf, p, \partial_t p)
=  
-P(x, p)\tf
+ q_1(x,p)p\partial_{t}^2 \tf  +
q_2(x,p) (\partial_t p)^2.
\end{align*}
Note that there exists $c_1, c_2, \epsilon > 0$ such that $c_1 \leq \alpha(x, p) \leq c_2$ when $\| p \|_{\zm} \leq \epsilon$.
It follows that $\tp$ solves the system
\begin{equation}\label{hmNBC}
\begin{cases}
P(x, \tp + \tf)\tp
= \tF(x, \partial_t^2 \tf, \Delta \tf, \tp+ \tf, \partial_t(\tp+\tf)),
 &\mbox{on } M,\\
\tp = 0, &\mbox{on } \partial M,\\
\tp = 0, &\mbox{for } t <0.
\end{cases}
\end{equation}
For $R>0$, we define $Z^m(R,T)$ as the set containing all functions $v$ such that
\[
v \in \bigcap_{k=0}^{m} W^{k, \infty}([0,T]; H_{m-k}(\Omega)),
\quad \|v\|^2_{Z^m} = \sup_{t \in [0, T]} \sum_{k=0}^m \|\partial_t^k v(t)\|^2_{{m-k}} \leq R^2.
\]
We abuse the notation $C$ to denote different constants that depends on $m, M, T$.
One can show the following claim by Sobolev Embedding Theorem.
\begin{claim}[{\cite[Claim 3]{Uhlmann2021a}}]\label{normineq}
Suppose $u \in Z^m(R, T)$.
Then $\|u\|_\zmm \leq \|u\|_\zm$ and $\nabla^j_g u \in Z^{m-1}(R, T)$,
$j = 1, \dots, 4$. Moreover, we have the following estimates.
\begin{enumerate}[(1)]
    \item If $v \in Z^m(R', T)$, then $\|uv\|_\zm \leq C \|u\|_\zm \|v\|_\zm$.
    \item If $v \in \zmm(R', T)$, then $\|uv\|_\zmm \leq C \|u\|_\zm \|v\|_\zmm$.
    \item If $q(x,u) \in C^m(M \times \mathbb{C})$,
    then $\|q(x, u)\|_\zm \leq C \|q\|_{C^m(M \times \mathbb{C})} (\sum_{l=0}^{m} \|u\|^l_\zm)$.
\end{enumerate}
\end{claim}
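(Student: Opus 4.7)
The plan is to establish the four statements in turn, each reducing to a standard Sobolev-type estimate applied slice-by-slice in time together with careful bookkeeping of derivative counts. Throughout I exploit the standing assumption $m \geq 5$, which guarantees that $H^s(\Omega)$ is a Banach algebra and embeds into $L^\infty(\Omega)$ for $s > 3/2$.

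For part (1), the embedding $\|u\|_\zmm \leq \|u\|_\zm$ is immediate from the definition: for each $k \in \{0, \ldots, m-1\}$, the term $\|\partial_t^k u(t)\|_{m-1-k}^2$ appearing in $\|u\|_\zmm^2$ is dominated by the corresponding term $\|\partial_t^k u(t)\|_{m-k}^2$ appearing in $\|u\|_\zm^2$, so summing and taking the supremum in $t$ yields the bound. The assertion $\nabla_g^j u \in \zmm(R,T)$ amounts to losing a single spacetime derivative for each index $j$, which can be verified by rewriting $\partial_t^k \nabla_g^j u$ in terms of mixed spacetime derivatives of $u$ of total order at most $m$ and applying the definition.

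For parts (2) and (3), the plan is the Leibniz rule combined with an anisotropic Sobolev multiplication estimate. Writing
\begin{equation*}
\|\partial_t^k(uv)(t)\|_{m-k} \leq C \sum_{j=0}^{k} \binom{k}{j} \|\partial_t^j u(t) \cdot \partial_t^{k-j} v(t)\|_{m-k}
\end{equation*}
and distributing spatial derivatives by Leibniz a second time, every summand takes the form $(\partial_t^j \partial_x^\alpha u) \cdot (\partial_t^{k-j} \partial_x^\beta v)$ with $|\alpha| + |\beta| \leq m - k$. I would then apply the classical bilinear estimate $\|fg\|_{L^2} \leq C \|f\|_{H^{s_1}} \|g\|_{H^{s_2}}$ valid whenever $s_1 + s_2 \geq 3/2$ with $s_1, s_2 \geq 0$. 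Because $m \geq 5$, one can always arrange that at least one of the two factors lies in an $H^s$ with $s > 3/2$, so Sobolev embedding into $L^\infty$ on that factor closes the estimate. Item (3) is established identically, except that the factor drawn from $\zmm(R', T)$ is always allotted the lower Sobolev index so that the $\zm$-factor absorbs any top-order derivative.

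For part (4), I would invoke a classical Moser-type composition estimate slice-by-slice. For each fixed $t$, since $u(t) \in H^m(\Omega) \hookrightarrow L^\infty(\Omega)$, one has
\begin{equation*}
\|q(\cdot, u(t))\|_{H^{m-k}} \leq C \, \|q\|_{C^m(M \times \mathbb{C})} \Bigl(1 + \|u(t)\|_{H^{m-k}}^{m-k}\Bigr).
\end{equation*}
Expanding the time derivatives $\partial_t^k q(x, u)$ via the Fa\`a di Bruno formula produces sums of products $(\partial_t^{j_1} u) \cdots (\partial_t^{j_r} u)$ with $j_1 + \dots + j_r = k$; each such product is controlled by parts (2)--(3), so $\|\partial_t^k q(x, u)(t)\|_{m-k}$ is bounded by $C \|q\|_{C^m(M \times \mathbb{C})}$ times a polynomial of total degree at most $m$ in $\|u\|_\zm$. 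Summing in $k$ and taking a supremum in $t$ yields the desired estimate.

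The main obstacle lies in the bookkeeping in parts (2)--(4): one must verify that every term produced by the Leibniz and Fa\`a di Bruno expansions places each factor into a Sobolev index where the multiplication estimate applies. This forces a case split between low-$k$ terms, where $H^{m-k}$ is itself a Banach algebra, and high-$k$ terms, where one relies on Sobolev embedding into $L^\infty$ on the companion factor. The lower bound $m \geq 5$ is precisely what makes both regimes accessible simultaneously.
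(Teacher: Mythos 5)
The paper does not supply a proof for this claim: it is cited directly from \cite[Claim 3]{Uhlmann2021a}, so there is no in-paper argument to compare against. Your outline---Leibniz expansion in time and space combined with the bilinear $H^{s_1}\times H^{s_2}\to L^2$ multiplication estimate for the product bounds, and a slice-in-time Moser/Fa\`a di Bruno argument for the composition bound---is the standard route to these anisotropic $Z^m$ estimates, and it is sound in outline given the standing hypothesis $m\geq 5$, which, as you observe, supplies enough Sobolev room to put at least one factor into $L^\infty(\Omega)$ in every term of the Leibniz expansion. Two small points deserve a correction or a remark. The bilinear estimate $\|fg\|_{L^2(\Omega)}\lesssim\|f\|_{H^{s_1}}\|g\|_{H^{s_2}}$ in dimension $3$ requires $s_1+s_2>3/2$ strictly unless one of the indices already exceeds $3/2$ (so that factor embeds into $L^\infty$); writing ``$s_1+s_2\geq 3/2$'' admits the failing endpoint $s_1=s_2=3/4$, although your bookkeeping under $m\geq 5$ never actually touches it. For the composition bound, to land precisely on the stated polynomial $\sum_{l=0}^{m}\|u\|_{Z^m}^l$ you should make explicit that each Fa\`a di Bruno term carries a factor $\partial_u^r q(x,u)$ controlled by $\|q\|_{C^m(M\times\mathbb{C})}$ (using $u\in L^\infty$) together with a product of at most $m$ time derivatives of $u$, and that those products are closed by iterating the product estimate of item (1); as written this step is gestured at rather than carried out. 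Finally, your interpretation of $\nabla_g^j u\in Z^{m-1}(R,T)$, $j=1,\dots,4$, as first-order derivatives indexed by the coordinate direction (rather than $j$-th order derivatives) is the one consistent with the stated target space $Z^{m-1}$, and is the intended reading.
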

For $v \in Z^m(\rho_0, T)$ with $\rho_0$ to be specified later, we consider the linearized problem
\begin{equation*}
\begin{cases}
P(x, v + \tf)\tp
= \tF(x, \partial_t^2 \tf, \Delta \tf, v + \tf, \partial_t(v+\tf)),\\

\tp = 0, &\mbox{on } \partial M,\\
\tp = 0, &\mbox{for } t <0,
\end{cases}
\end{equation*}
and we define the solution operator $\mathcal{J}$ which maps $v$ to the solution $\tp$.
By Claim \ref{normineq} and (\ref{eq_F}),
we have
\begin{align*}
&\|
\tF(x, \partial_t^2 \tf, \Delta \tf, v + \tf, \partial_t(v+\tf))
\|_{Z^{m-1}} \\
= &
\| -P(x, v+\tf) \tf
+ q_1(x,v+\tf)(v+\tf)\partial_{t}^2 \tf  +
q_2(x,v+\tf) (\partial_t (v+\tf))^2
\|_{Z^{m-1}} \\
\leq &  \| P(x, v+\tf) \tf \|_{C^{m-1}([0,T]\times \Omega)}
+ \|q_1(x,v + \tf)\|_\zm \|v + \tf\|_\zm
\| \partial_t^2 \tf \|_{C^{m-1}([0,T]\times \Omega)}  \\
& \quad \quad \quad  \quad \quad \quad  \quad \quad \quad  \quad \quad \quad  \quad \quad \quad  \quad \quad \quad  \quad \quad \quad \quad \quad \quad \quad  +
\|q_2(x,v + \tf)\|_\zm \|v + \tf\|^2_\zm   \\
\leq & C(\ep_0 + (1 + (\rho_0 + \ep_0) + \ldots +  (\rho_0 + \ep_0)^m) (\rho_0 + \ep_0)^2).
\end{align*}
According to our modified version of \cite[Theorem 3.1]{Dafermos1985} in Section \ref{subsec_energy},
the linearized problem has a unique solution
\[
\tp \in \bigcap_{k=0}^{m} C^{k}([0,T]; H_{m-k}(\Omega))
\]
such that
\[
 \| \tp\|_{Z^m} \leq C(\ep_0 + (1 + (\rho_0 + \ep_0) + \ldots +  (\rho_0 + \ep_0)^m) (\rho_0 + \ep_0)^2 )e^{KT},
\]
where $C, K$ are positive constants.
If we assume $\rho_0$ and $\epsilon_0$ are small enough, then the above inequality implies that
\[
\| \tp\|_{Z^m} \leq C(\ep_0 + (\rho_0 + \ep_0)^2 )e^{KT}.
\]
For any $\rho_0$ satisfying $\rho_0 < 1/({2C e^{KT}})$,
we can choose $\ep_0 = {\rho_0}/({8C e^{KT}}) $  such that
\begin{equation}\label{eps}
C(\ep_0 + (\rho_0 + \ep_0)^2 )e^{KT} < \rho_0.
\end{equation}
In this case, we have  $\mathcal{J}$ maps $Z^m(\rho_0, T)$ to itself.

In the following we show that $\mathcal{J}$ is a contraction map if $\rho_0$ is small enough.
It follows that the boundary value problem (\ref{hmNBC}) has a unique solution $\tilde{u} \in Z^m(\rho_0, T)$ as a fixed point of $\mathcal{J}$.
Indeed, for $\tp_j = \mathcal{J}(v_j)$ with $v_j \in Z^m(\rho_0, T)$, we have that $\tp_2 - \tp_1$ satisfies
\begin{align*}
&
P(x, v_2 + \tf) (\tp_2 - \tp_1)\\
=&\tF(x, \partial_t^2 \tf, \Delta \tf, v_2 + \tf, \partial_t(v_2 + \tf))
- \tF(x, \partial_t^2 \tf, \Delta \tf, v_1 + \tf, \partial_t(v_1 + \tf))\\
& \quad \quad \quad \quad \quad \quad \quad \quad \quad
\quad \quad \quad  \quad \quad \quad \quad \quad \quad \quad \quad \quad \quad \quad \quad + (P(x, v_1 + \tf)   - P(x, v_2 + \tf) ) \tp_1 \\
=&(\alpha(x, v_2 + \tf) - \alpha(x, v_1 + \tf))\Delta(\tf - \tp_1)
+ \lge (\kappa(v_2 + \tf) - \kappa (v_1+ \tf))b(x), \nabla (\tf - \tp_1)\rge\\
&\quad \quad \quad \quad \quad \quad \quad+ (\kappa(v_2 + \tf) - \kappa (v_1+ \tf))h(x) (\tf - \tp_1)
+ ((q_1(x,v_2 +\tf)(v_2 +\tf)\\
&  -q_1(x,v_1 +\tf)(v_1 + \tf))\partial_t^2 \tf +(q_2(x,v_2 +\tf)(\partial_t(v_2 + \tf))^2 -q_2(x,v_1 +\tf)(\partial_t(v_2 + \tf))^2)
\\
= & (\alpha(x, v_2 + \tf) - \alpha(x, v_1 + \tf))\Delta(\tf - \tp_1)
+ \lge (\kappa(v_2 + \tf) - \kappa (v_1+ \tf))b(x), \nabla (\tf - \tp_1)\rge\\
&+ (\kappa(v_2 + \tf) - \kappa (v_1+ \tf))h(x) (\tf - \tp_1)
+(q_1(x,v_2 +\tf) -q_1(x,v_1 +\tf))(v_2 + \tf)\partial_t^2 \tf\\
&+ q_1(x,v_1 +\tf)(v_2 -v_1)\partial_t^2 \tf
+ (q_2(x,v_2 +\tf) -q_2(x,v_1 +\tf))(\partial_t(v_2 + \tf))^2)
+ q_2(x,v_1 +\tf)\\
&  \quad \quad \quad \quad \quad \quad \quad \quad \quad
\quad \quad \quad  \quad \quad \quad \quad \quad \quad
\quad \quad \quad \quad \quad \quad
\quad \quad \quad \quad \quad
+\partial_t(v_2 +v_1+ 2\tf)\partial_t(v_2 - v_1).
\end{align*}
We denote the right-hand side by $\mathcal{I}$ and using Claim \ref{normineq} for each term above,
we have
\begin{align*}
\|\mathcal{I}\|_{Z^{m-1}}
&\leq C'  \|v_2 - v_1\|_\zm (\rho_0 + \ep_0),
\end{align*}
where $\rho_0, \ep_0$ are chosen to be small enough.
By \cite[Theorem 3.1]{Dafermos1985} and (\ref{eps}), one obtains
\begin{align*}
&\| \tp_2 - \tp_1 \|_\zm
\leq CC' \|v_2 - v_1\|_\zm (\rho_0 + \ep_0) e^{KT} < CC'{{ e^{KT} }} (1 + 1/(8Ce^{KT}))\rho_0 \|v_2 - v_1\|_\zm.
\end{align*}
Thus, if we choose $\rho \leq \frac{1}{CC'e^{KT}(1 + 1/(8Ce^{KT}))} $, then
\[
\| \mathcal{J}(v_2 -v_1)\|_\zm < \|v_2 - v_1\|_\zm
\]
shows that $\mathcal{J}$ is a contraction.
This proves that there exists a unique solution $\tilde{u}$ to the problem (\ref{hmNBC}).
Furthermore, by \cite[Theorem 3.1]{Dafermos1985} this solution
satisfies the estimates $\|\tp\|_\zm \leq 8C e^{KT} \ep_0.$
Therefore, we prove the following proposition.
\begin{pp}
Let $f \in C^{m+1} ([0,T] \times \partial \Omega)$ with $m \geq 5$.
Suppose $f = \partial_t f = 0$ at $t=0$.
Then there exists small positive $\ep_0$ such that for any
$\|f\|_{C^{m+1} ([0,T] \times \partial \Omega)} \leq \epsilon_0$, we can find a unique solution
\[
p \in \bigcap_{k=0}^m C^k([0, T]; H_{m-k}(\Omega))
\]
to the boundary value problem (\ref{eq_problem})
with
$b(x) \in C^\infty(M; T^*M)$, $h(x) \in C^\infty(M)$,  and $\beta_{m+1}(x) \in C^\infty(M)$ for $m \geq  1$.
Moreover, we have $p$ satisfies the estimate
\[
\|{p}\|_\zm \leq C \|f\|_{C^{m+1}([0,T]\times \partial \Omega)}
\]
for some $C>0$ independent of $f$.
\end{pp}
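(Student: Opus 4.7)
\emph{Proof plan.} The plan is to reduce (\ref{eq_problem}) with small boundary data $f$ to an equivalent quasilinear problem with zero data, and then produce the solution as a fixed point of an iteration map on the closed ball $Z^m(\rho_0, T)$, using the linear energy estimate developed in Section \ref{subsec_energy}.

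First I would construct a $C^{m+1}$ extension $u_f$ of $f$ to all of $M$ with $u_f = \partial_t u_f = 0$ at $\{t=0\}$ and $\|u_f\|_{C^{m+1}(M)} \leq \|f\|_{C^{m+1}([0,T]\times\partial\Omega)}$, and set $\tilde{p} = p - u_f$. Using the expansion $F(x, p, \partial_t p, \partial_t^2 p) = F_1(x,p)\, p\, \partial_t^2 p + F_2(x, p) (\partial_t p)^2$ with $F_1, F_2 \in C^\infty(M \times \mathbb{R})$, the coefficient of $\partial_t^2 \tilde{p}$ in the equation for $\tilde{p}$ becomes $1 - F_1(x, p) p$, which is uniformly bounded away from zero whenever $\|\tilde{p}\|_\zm$ and $\epsilon_0$ are small. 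Dividing through yields a boundary value problem of the form $P(x, \tilde{p} + u_f) \tilde{p} = \widetilde{F}(x, \partial_t^2 u_f, \Delta u_f, \tilde{p} + u_f, \partial_t(\tilde{p} + u_f))$ with $\tilde{p} = 0$ on $\partial M$ and $\tilde{p} \equiv 0$ for $t < 0$, whose principal part is strictly hyperbolic and whose first-order part is an arbitrary smooth one-form.

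Second, I would define the iteration map $\mathcal{J}: Z^m(\rho_0, T) \to Z^m(\rho_0, T)$ by $\mathcal{J}(v) = \tilde{p}$, where $\tilde{p}$ solves the \emph{linear} problem obtained by freezing the argument $v + u_f$ in every solution-dependent coefficient. The modified energy estimate of Section \ref{subsec_energy}, which removes the skew-symmetry hypothesis on $b$ from \cite[Theorem 3.1]{Dafermos1985}, gives $\|\tilde{p}\|_\zm \leq C e^{KT} \|\widetilde{F}\|_\zmm$, and the product/composition bounds in Claim \ref{normineq} yield $\|\widetilde{F}\|_\zmm \leq C(\epsilon_0 + (\rho_0 + \epsilon_0)^2)$. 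Choosing $\rho_0$ sufficiently small and then $\epsilon_0 = \rho_0/(8 C e^{KT})$ ensures that $\mathcal{J}$ maps the ball of radius $\rho_0$ into itself. For the contraction step I would compute the equation satisfied by $\mathcal{J}(v_2) - \mathcal{J}(v_1)$: its right-hand side is a finite sum of differences $\alpha(x, v_2 + u_f) - \alpha(x, v_1 + u_f)$ and analogous differences involving $\kappa, q_1, q_2$, each controlled by $\|v_2 - v_1\|_\zm (\rho_0 + \epsilon_0)$ via Claim \ref{normineq} and smoothness of the coefficients. A second application of the linear estimate gives $\|\mathcal{J}(v_2) - \mathcal{J}(v_1)\|_\zm \leq C'(\rho_0 + \epsilon_0) e^{KT} \|v_2 - v_1\|_\zm$, a strict contraction after further shrinking $\rho_0$. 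Banach's fixed point theorem then yields a unique $\tilde{p} \in Z^m(\rho_0, T)$, and $p = \tilde{p} + u_f$ is the desired solution, satisfying $\|p\|_\zm \lesssim \|f\|_{C^{m+1}}$.

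\textbf{Main obstacle.} The principal difficulty is that the equation is genuinely quasilinear, so one must work with a linear energy estimate that tracks \emph{all} coefficients as time-dependent while simultaneously accommodating an arbitrary smooth first-order term $\lge b(x), \nbg \cdot \rge$ with no symmetry assumption. The original result of Dafermos--Hrusa requires the skew-symmetric part of $B(t)$ to satisfy $|b(t;v,w)| \leq \eta \|v\|_1 \|w\|_0$, a bound that fails for a generic one-form; overcoming this by splitting $B(t) = B_s(t) + B_1(t)$, controlling the first-order piece $\langle B_1(t) \partial_t^{m-1} u_n, \partial_t^m u_n\rangle$ through integration by parts and absorbing it via Gronwall — as carried out in Section \ref{subsec_energy} — is the essential ingredient that enables the fixed-point iteration above to close.
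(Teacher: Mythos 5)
Your plan mirrors the paper's proof essentially step for step: the extension of $f$ to $\tf$, the quotient by $1 - F_1(x,p)p$, the iteration map $\mathcal{J}$ on $Z^m(\rho_0,T)$ with coefficients frozen at $v + \tf$, the self-mapping and contraction estimates via Claim \ref{normineq} and the modified energy estimate, and the Banach fixed point argument are all precisely what Section \ref{Sec_well} carries out. You also correctly identify the modified Dafermos--Hrusa estimate accommodating an arbitrary smooth one-form as the key enabling ingredient, matching the paper's presentation in Section \ref{subsec_energy}.
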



\subsection{Determining $b(x)$ and $h(x)$ on the boundary}\label{subsec_boundary}
In this part,
we would like to determine the jets of the one-form $\uu$ and the potential $h$, on the subset $\pMo$ of the boundary, from the first-order linearization $\epslamonebullet$.

This result is proved in \cite{Stefanov2018} for the wave operator with a magnetic field, which corresponds to a slightly different smooth one-form.
Here we present the proof for completeness.

Indeed, by the asymptotic expansion in $(\ref{expand_u})$, we have $\epslamone = v_1|_{\partial M}$, where $v_1$ solves the boundary value problem for the linear wave equation (\ref{eq_v1}) with Dirichlet data $f_1$.
This implies $\epslamonebullet$ is the DN map for the linear wave equation.
In \cite{Stefanov2018}, it is proved that the jets of the metric, the magnetic field, and the potential are determined from the DN map in a stable way, up to a gauge transformation, for the linear problem.
Here we assume the metric is known and we would like to recover the jets of $b$ and $h$ 
on the boundary, up to a gauge transformation, as a special case of  \cite{Stefanov2018}.
More explicitly, suppose there are smooth one-forms $\bk$ and smooth function $\hk$, for $k=1,2$.
Consider the corresponding DN map $\LbF^{(k)}$ for the nonlinear problem (\ref{eq_problem}), for $k = 1,2$.
\begin{lm}\label{lm_boundary}
If the DN maps satisfy
\[
\LFone(f) = \LFtwo(f)
\]
for any $f$ in a small neighborhood of the zero function in  $C^6(\pMo)$,
then there exists a smooth function $\uu$ on $M$ with $\uu|_{\pMo} = 1$
such that for $j = 0, 1, 2, \ldots$ we have
\begin{align*}
\partial_\nu^j (\lge \btwo, \nu \rge)|_{\pMo} &=
\partial_\nu^j (\lge \bone - 2\uu^{-1} \diff \uu, \nu \rge)|_{\pMo},\\
\partial_\nu^j \htwo|_{\pMo} &=
\partial_\nu^j (\hone - \lge \bone, \uu^{-1} \diff \uu \rge - \uu^{-1} \sq \uu)|_{\pMo}.
\end{align*}
\end{lm}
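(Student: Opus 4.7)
The plan is to reduce the statement to boundary determination for a \emph{linear} wave equation and then invoke the standard Gaussian-beam / WKB techniques as in \cite{Stefanov2018}. First I would compute the first-order linearization of $\LFk$: from the asymptotic expansion (\ref{expand_u}) one reads
\[
\partial_{\ep_1}\LFk(\ep_1 f_1)|_{\ep_1=0} = \bigl(\partial_\nu v_1^{(k)} + \tfrac{1}{2}\lge \bk, \nu\rge v_1^{(k)}\bigr)\big|_{\pMo},
\]
where $v_1^{(k)}$ solves the linear IBVP $(\sq + \lge \bk, \nbg\rge + \hk) v_1^{(k)} = 0$ with Dirichlet data $f_1$ and vanishing initial data. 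The hypothesis $\LFone(f) = \LFtwo(f)$ for all small $f \in C^6(\pMo)$ therefore implies equality of the two linear DN maps $\Lambda^{\mathrm{lin}}_{\bk,\hk}$.

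Next I would fix a boundary point $p_0 \in \pMo$, introduce boundary normal coordinates $(x^n, x')$ with $\partial M = \{x^n = 0\}$, and build approximate solutions of WKB form
\[
u_\lambda = e^{i\lambda \varphi(x)} \sum_{j \ge 0} \lambda^{-j} a_j(x),
\]
where $\varphi$ has non-characteristic tangential gradient on $\partial M$ and satisfies the eikonal equation $g^{ij}\partial_i\varphi \partial_j \varphi = 0$, and the amplitudes $a_j$ are concentrated near $p_0$. The amplitudes satisfy transport equations whose coefficients involve $\lge b, \nbg \varphi\rge$ and $h$. Substituting the restrictions $u_\lambda|_{\partial M}$ into the identity $\Lambda^{\mathrm{lin}}_{\bone,\hone}(u_\lambda|_{\partial M}) = \Lambda^{\mathrm{lin}}_{\btwo,\htwo}(u_\lambda|_{\partial M})$ and expanding in powers of $\lambda^{-1}$, order-by-order matching produces an inductive sequence of identities between boundary jets of $\bk$ and $\hk$.

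Finally I would construct the gauge function $\uu$ by specifying its Taylor series in $x^n$ at $\partial M$. The condition $\uu|_{\pMo}=1$ fixes the zeroth normal jet, so $d\uu|_{\partial M}$ is purely normal. At each successive order $j \ge 1$, the coefficient $\partial_\nu^j \uu|_{\pMo}$ is chosen so that the gauge identity of Lemma~\ref{lm_gauge} absorbs the $j$-th order discrepancy produced by the WKB matching; because $\partial_\nu^j\uu|_{\partial M}$ enters the relation for $\partial_\nu^{j-1}\lge b, \nu\rge$ linearly with a nonzero coefficient, each Taylor coefficient is uniquely determined. Borel's lemma then gives a smooth $\uu \in C^\infty(M)$ realizing this formal series on $\partial M$, and the stated identities for $\htwo$ follow at once from Lemma~\ref{lm_gauge}.

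The main obstacle is disentangling the gauge ambiguity from the genuine content of the DN map: a direct WKB computation only recovers gauge-invariant combinations of $\bk$ and $\hk$, so the bookkeeping must show that the gauge parameters $\{\partial_\nu^j \uu|_{\pM}\}_{j \ge 1}$ available under the constraint $\uu|_{\pM}=1$ control exactly the normal jets $\{\partial_\nu^{j-1}\lge b, \nu\rge\}_{j\ge 1}$, which is precisely the form of the claim. The tangential components of $b$ on $\partial M$ are already gauge-invariant under this restricted family of $\uu$ and are read off directly from the symbol of $\Lambda^{\mathrm{lin}}_{\bk,\hk}$.
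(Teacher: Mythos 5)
Your proposal is correct and follows essentially the same route as the paper: reduce to the linear DN map via the first-order linearization, run a WKB/geometric-optics expansion in boundary (semi-geodesic) normal coordinates, and match the DN maps order by order in $\lambda$, with the gauge function $\uu$ accounting exactly for the normal jets of $\lge b,\nu\rge$. The only cosmetic difference is that the paper first gauge-normalizes each $\bk$ so that $\partial_\nu^j\lge \bk,\nu\rge|_{\pM}=0$ (via \cite[Lemma 2.5]{Stefanov2018}, with $\uu^{(k)}=e^{\psi^{(k)}/2}$) and then shows the remaining boundary jets coincide, whereas you build the normal jets of $\uu$ inductively during the matching and close with Borel's lemma.
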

\begin{proof}
First, we fix some $(y_|, \eta_|) \in T^*(\partial M)$, where $y_| \in \pMo$ and $\eta_|$ is {a future-pointing timelike covector}.
There exists a unique $(y, \eta) \in L^*_{+, \partial M} M$
such that $(y_|, \eta_|)$ is the orthogonal projection of $(y, \eta)$ to $\partial M$.
In the following, we consider the semi-geodesic normal coordinates $(\bx, x^3)$ near $y \in \partial M$.
The dual variable is denoted by $(\bxi, \xi_3)$.
Moreover, in this coordinate system the metric tensor $g$ takes the form
\[
g(x) = g_{\alpha \beta} (x)\diff x^\alpha \otimes \diff x^\beta + \diff x^3 \otimes \diff x^3, \quad \alpha, \beta \leq 2.
\]
The normal vector on the boundary is locally given by $\nu = (0,0,0,1)$
and we write $\partial_\nu = \partial_3$.
For more details about the semi-geodesic coordinates see \cite[Lemma 2.3]{Stefanov2018}.

Second, by \cite[Lemma 2.5]{Stefanov2018}, there exist smooth functions $\psik$ with $\psik|_{\pMo} = 0$ such that in the semi-geodesic normal coordinates,
one has
\begin{align*}
\partial_3^j(\lge \bk - \diff \psik, \nu \rge)|_{x^3 = 0} = 0, \quad j = 0,1,2, \ldots.
\end{align*}
We write $\bk_3(\bx, 0)  =  \lge \bk(\bx, 0), \nu \rge$ and we can assume
\[
\partial_3^j \bk_3(\bx, 0) = 0, \quad k = 1,2
\]
without loss of generosity.
Indeed, if it is not true, we can replace $\bk$ by $\bk - \diff \psik$ and $\hk$ by $\hk  - \lge \bk, (\uu^{(k)})^{-1} \diff \uu^{(k)} \rge - (\uu^{(k)})^{-1} \sq \uu^{(k)}$,
where we set $\uu^{(k)} = e^{\frac{1}{2}\psi^{(k)}}$.
By Lemma \ref{lm_gauge}, the linearized DN maps do not change.

Let $(y_|, \eta_|) \in T^*(\partial M)$ be fixed as above.
We focus on a small conic neighborhood $\Gamma_\partial$ of $(y_|, \eta_|)$.
Let $\chi(\bx, \bxi)$ be a smooth cutoff function homogeneous in $\bxi$ of degree zero, supported in  $\Gamma_\partial$.
Suppose  $\chi(\bx, \bxi)= 1$ near $(y_|, \eta_|)$.
Consider the DN map $\Upk$ for the linear problem (\ref{bvp_qg}) with $\bk, \hk$, $k = 1,2$.
From the first-order linearization of $\LFk$ for $k=1,2$, we have
\[
\Upone(f) = \Uptwo(f)
\]
for $f \in C^6(\pMo)$ with small data.
In particular,
since there are no periodic null geodesics, one can consider the microlocal version of $\Upk$, i.e., the
map from $f_1 \in \mathcal{E}'(\partial M)$ to
$v_1|_{\partial M}$ restricted near $(y_|, \eta_|)$,
with $\wfset(f_1) \subset \Gamma_\partial$ and
\[
\square_g v_1 \in C^\infty(M) \text{ near } y, \quad  v|_{\partial M} = f_1 \mod C^\infty(M).
\]
In the rest of the proof, we abuse the notation $\Upk$ to denote its microlocal version.

We follow the proof of \cite[Theorem 3.2]{Stefanov2018}.
One can choose a special designed function
\[
h(x_|) =e^{i\lambda x_| \cdot \xi_|} \chi(x_|, \xi_|)
\]
with large parameter $\lambda$,
where $\chi$ is the smooth cutoff function supported near $(y_|, \eta_|)$ that we defined before.
For $k =1,2$, we construct a sequence of geometric optics approximations of the local outgoing solutions near $(y_|, \eta_|)$ of the form
\[
\uNk (x) = e^{i \lambda \phik(x, \bxi)} \ak(x, \bxi)= e^{i \lambda \phik(x, \bxi)} \sum_{j=0}^{N} \frac{1}{\lambda^j}\ak_{j}(x, \bxi),
\]
where $\phik(x, \xi_|)$ is the phase function
and $\ak(x, \xi_|) $ is the amplitude with the asymptotic expansion $\ak = \sum_{j \geq 0} \ak_j$.
Here we assume each $\ak_j(x, \xi_|)$ is homogeneous in $\xi_|$ of order $-j$.

We plug the ansatz into the linear equation to compute
\begin{align*}
&\square_g \uNk + \langle \bk(x), \nbg \uNk \rangle + \hk(x) \uNk\\
= & e^{i \lambda \phik}
(-\lambda^2|\nbg \phik|_g^2 \ak
+ \lambda (i 2\lge \nbg \phik,  \nbg \ak \rge
+ i \lambda \sq \phik \ak
+ i \lambda \lge \bk, \nbg \phik \rge \ak)\\
&
+ (\sq \ak +  \lge \bk, \nbg \ak \rge + \hk \ak)).
\end{align*}
Note the phase functions $\phik$
satisfy the same eikonal equation with the same initial condition
\begin{align*}
\partial_3\phik(x) = \sqrt{-g^{\alpha\beta}(x) \partial_\alpha \phik(x) \partial_\beta \phik (x)}, \text{ for } \alpha, \beta \leq 2, \quad
\phik(x_|,0) = \bx\cdot \bxi,
\end{align*}
This implies that $\phi^{(1)} = \phi^{(2)}$ and thus we denote them by $\phi$.
Next, the amplitude satisfies the transport equation with the initial condition
\begin{align*}
\Xk \ak_0 &= 0, \quad \ak_0 (\bx, 0, \bxi)
= \chi(\bx, \bxi),\\
\Xk \ak_j &= r_j, \quad \ak_j(\bx, 0, \bxi) = 0, \text{ for } j > 0.
\end{align*}
Here we write
\[
\Xk = i(2 g^{\alpha \beta} \partial_\alpha \phi\partial_\beta + \lge \bk, \nbg \phi \rge + \sq \phi),
\]
and $r_j$ is the term involving the derivatives w.r.t. $\ak_{0}, \ak_{1}, \ldots, \ak_{j-1}, \phi$ of order no more than $j$.
In semi-geodesic coordinates $(\bx, x^3)$,
one has $g^{3\alpha} = \delta_{3\alpha}$.
Then the first transport equation can be written as
\begin{align}\label{eq_a0}
    (2 \partial_3 \phi \partial_3 + \sum_{\alpha, \beta \leq 2} \bk_\alpha g^{\alpha \beta} \partial_\beta \phi ) \ak_{0}
    + (\sum_{\alpha, \beta \leq 2}2 \partial_\alpha \phi \partial_\beta \ak_{0} + \bk_3  \partial_3 \phi
    + \sq \phi))\ak_{0} = 0.
\end{align}
Here we reorganize the left hand side as two groups.
When restricting the left hand side to $\bx = 0$, we would like to show the terms in the second group is fixed for $k = 1, 2$.
Indeed,  recall we assume $\bk_3(\bx, 0) = 0$ with loss of generosity.
Moreover, with $\ak_{0}(\bx, 0, \bxi) = \chi(\bx, \bxi)$, we have
\[
(\sum_{\alpha, \beta \leq 2}2 \partial_\alpha \phi \partial_\beta + \sq \phi) a^{(1)}_{0}(\bx, 0, \bxi)  = (\sum_{\alpha, \beta \leq 2}2 \partial_\alpha \phi \partial_\beta + \sq \phi)a^{(2)}_{0}(\bx, 0, \bxi).
\]
It follows that
\begin{align}\label{eq_tr0}
(2 \partial_3 \phi \partial_3 + \sum_{\alpha, \beta \leq 2} \bone_\alpha g^{\alpha \beta} \partial_\beta \phi ) a^{(1)}_{0}
 =
 (2 \partial_3 \phi \partial_3 + \sum_{\alpha, \beta \leq 2} \btwo_\alpha g^{\alpha \beta} \partial_\beta \phi ) a^{(2)}_{0}
\end{align}
when $x^3 = 0$.

On the other hand, the local DN map is given by
\begin{align*}
\Upk(h) = - e^{i\lambda x_| \cdot \xi_|}
(i \lambda \partial_3 \phi(x_|,0,\xi_|)
+  \sum_{j=1}^N(\partial_3 - \frac{1}{2} \lge \bk, \nu \rge)
\ak_{j}(x_|,0, \xi_|) + O(\lambda^{-N-1})).
\end{align*}
Recall $\ak_{0}(\bx,0, \xi_|) = \chi(\bx, \bxi) = 1$ near $y$ and we have $\bk_3(\bx,0) = 0$ for $k = 1,2$.
By comparing $\Upk$ up to $O(\lambda^{-1})$, we have
\begin{align}\label{eq_Upk}
\partial_3
a^{(1)}_{0}(x_|,0, \xi_|)
=
\partial_3
a^{(2)}_{0}(x_|,0, \xi_|),
\end{align}
since $\lge \bk, \nu \rge|_{x^3 = 0} = \bk_3(\bx,0) = 0$.
Then by an inductive procedure, by comparing $\Upk$ up to $O(\lambda^{-j-1})$, we have
\begin{align}\label{eq_a1}
\partial_3
a^{(1)}_{j}(x_|,0, \xi_|)
=
\partial_3
a^{(2)}_{j}(x_|,0, \xi_|).
\end{align}

Note that $\partial_\alpha \phi(\bx, 0) = \xi_\alpha$, when $\alpha = 0,1,2$.
Combining (\ref{eq_tr0}) and (\ref{eq_Upk}), we have
\[
\sum_{\alpha, \beta \leq 2} \bone_\alpha(\bx, 0) g^{\alpha \beta} \xi_\beta
=\sum_{\alpha, \beta \leq 2} \btwo_\alpha(\bx, 0) g^{\alpha \beta} \xi_\beta.
\]
By perturbing $\bxi$, i.e., choosing three linearly independent covectors, we can show $\bone_\alpha(\bx, 0) = \btwo_\alpha(\bx, 0)$ for $\alpha = 0,1,2$.

Next, we would like to determine $\hk$ and $\partial_3 \bk$ on the boundary.
The transport equation for $\ak_1$ can be written as
\begin{align*}
    &i(2 \partial_3 \phi \partial_3 + \sum_{\alpha, \beta \leq 2} \bk_\alpha g^{\alpha \beta} \partial_\beta \phi ) \ak_{1}
    + (\sum_{\alpha, \beta \leq 2}2 \partial_\alpha \phi \partial_\beta \ak_{1} + \bk_3  \partial_3 \phi
    + \sq \phi))\ak_{1} \\
    =& - \sq \ak_0 - \lge \bk, \nbg \ak_0 \rge - \hk \ak_0.
\end{align*}
Restricting each term above to the boundary, we have
the second group of terms on the left hand side vanish, since $\ak_1(\bx, 0) = 0$.
With $\bone(\bx, 0) = \btwo(\bx, 0)$ and $\ak_0(\bx, 0) = 1$ near $y$, we have
\begin{align}\label{eq_a1h}
2i \partial_3 \phi \partial_3 \ak_1(\bx,0, \bxi)
= \partial_3^2\ak_0(\bx,0, \bxi)- \hk(\bx, 0).
\end{align}
In addition, we can differentiate the first transport equation (\ref{eq_a0}) on both sides to have
\begin{align*}
   & \partial_3(2 \partial_3 \phi \partial_3 + \sum_{\alpha, \beta \leq 2} \bk_\alpha g^{\alpha \beta} \partial_\beta \phi ) \ak_{0}
+ \partial_3(\sum_{\alpha, \beta \leq 2}2 \partial_\alpha \phi \partial_\beta \ak_{0} + \bk_3  \partial_3 \phi
+ \sq \phi))\ak_{0} = 0\\
\Rightarrow
&( 2 \partial_3 \phi \partial_3^2  + \sum_{\alpha, \beta \leq 2} \partial_3\bk_\alpha g^{\alpha \beta} \partial_\beta \phi )\ak_{0}
+ R(\partial\phi, \partial^2 \phi, \partial_3 \ak_{0}, \partial_3 \bk_3  ) = 0,
\end{align*}
where $R$ contains all the remaining terms only depending on $\partial\phi, \partial^2 \phi, \ak_0, \partial_2 \ak_{0}, \partial_3 \bk_3$.
When restricted to the boundary, these terms are the same for $k = 1, 2$.
This implies that
\begin{align}\label{eq_a0b}
2 \xi_3 \partial_3^2 a^{(1)}_0 (\bx,0, \bxi)
+  \sum_{\alpha, \beta \leq 2} \xi_\beta g^{\alpha \beta}   \partial_3\bone_\alpha(\bx, 0)
 =
 2 \xi_3 \partial_3^2 a^{(2)}_0 (\bx,0, \bxi)
  +  \sum_{\alpha, \beta \leq 2} \xi_\beta g^{\alpha \beta}   \partial_3\btwo_\alpha(\bx, 0),
\end{align}
where we write $\partial_j \phi = \xi_j$, for $j = 1,2,3$.
Combining (\ref{eq_a1}), (\ref{eq_a1h}), and (\ref{eq_a0b}), we have
    \[
    2 \xi_3    (\hone - \htwo)(\bx, 0) + \sum_{\alpha, \beta \leq 2} \xi_\beta g^{\alpha \beta}  (\partial_3\bone_\alpha - \partial_3\btwo_\alpha)(\bx, 0)
    = 0.
    \]
    By the eikonal equation, the covector $\xi^i$ satisfies
    $|\xi|_g = 0$, which implies it is lightlike. Then we can perturb fixed $\xi$ to get four lightlike covectors $\xi^l, l=1,2,3,4$, such that the equation above gives us a nondegenerate linear system of four equations.
    This implies
    \[
    \hone(\bx, 0)  =  \htwo (\bx, 0), \quad \partial_3\bone_\alpha(\bx, 0) = \partial_3\btwo_\alpha(\bx, 0).
    \]
    Then to determine the derivatives of $\hk$ and $\bk$, we can repeat the same analysis above.

    \end{proof}
\subsection{Extending $b(x)$ and $h(x)$}\label{subsec_extension}
In this subsection, we smoothly extend the unknown one-forms $\bk(x)$ and the unknown potentials $\hk(x)$ across the boundary, for $k=1,2$.

Recall $V = (0,T) \times \Omega_\mathrm{e} \setminus \Omega$.
As before, we fix some $(y, \eta) \in L^*_{+, \partial M} M$ on the boundary
and consider the semi-geodesic normal coordinates $(\bx, x^3)$ near $y \in \partial M$.
By using a partition of unity, we focus on a small neighborhood of $y$.

First, we extend $\bone, \hone$ in a small collar neighborhood of $\partial M$ near $y$.
We denote their extension by $\bonet, \honet$.
By Lemma \ref{lm_boundary}, there exists a
a smooth function $\uu$ on $M$ with $\uu|_{\pMo} = 1$
such that any order of the derivatives of $\btwo$ and $\bone - 2 \uu^{-1}\diff \uu$ coincides on $\partial M$.
This implies if we extend $\uu$ smoothly across the boundary to $\tuu$, then
there exists a smooth extension $\btwot$ of $\btwo$ such that
\[
\btwot = \bone - 2 \uu^{-1}\diff \uu, \quad \text{for any } x \in V.
\]
But note that in $M$, they may not coincide.
Similarly, we extend $\hone$ smoothly to $\honet$
and there exists a smooth extension $\htwot$ of $\htwo$ such that
\[
\htwot = \hone - \lge \bone, \uu^{-1} \diff \uu \rge - \uu^{-1} \sq \uu, \quad \text{for any } x \in V.
\]
In particular, one can shrink $\tM$ if necessary, such that the extension of $b(x), h(x)$ is defined in $\tM$.

\bibliography{microlocal_analysis}
\bibliographystyle{plain}



\end{document}